\documentclass[11pt,oneside,english]{amsart}
\usepackage[T1]{fontenc}
\usepackage[latin9]{inputenc}
\usepackage{geometry}
\geometry{verbose,tmargin=4cm,bmargin=4cm,lmargin=3cm,rmargin=3cm}
\usepackage{color}
\usepackage{babel}
\usepackage{amsthm}
\usepackage{amssymb}
\usepackage{stackrel}
\usepackage{setspace}
\usepackage[all]{xy}
\setstretch{1.1}
\usepackage[unicode=true,pdfusetitle,
 bookmarks=true,bookmarksnumbered=false,bookmarksopen=false,
 breaklinks=false,pdfborder={0 0 0},backref=false,colorlinks=true]
 {hyperref}
\hypersetup{
 citecolor=blue}
\usepackage{breakurl}

\makeatletter


\newcommand*\LyXZeroWidthSpace{\hspace{0pt}}

\numberwithin{equation}{section}
\numberwithin{figure}{section}
\theoremstyle{plain}
\newtheorem{thm}{\protect\theoremname}[section]
  \theoremstyle{remark}
  \newtheorem{rem}[thm]{\protect\remarkname}
  \theoremstyle{plain}
  \newtheorem{lem}[thm]{\protect\lemmaname}
  \theoremstyle{definition}
  \newtheorem{defn}[thm]{\protect\definitionname}
  \theoremstyle{plain}
  \newtheorem{prop}[thm]{\protect\propositionname}
  \theoremstyle{plain}
  \newtheorem{cor}[thm]{\protect\corollaryname}
  \theoremstyle{definition}
  \newtheorem{example}[thm]{\protect\examplename}

\usepackage{ifpdf} 
\ifpdf 

 \IfFileExists{lmodern.sty}{\usepackage{lmodern}}{}

\fi 

\usepackage[figure]{hypcap}

\let\myTOC\tableofcontents
\renewcommand\tableofcontents{%
  \frontmatter
  \pdfbookmark[1]{\contentsname}{}
  \myTOC
  \mainmatter }



\usepackage{fancyhdr}


\makeatother

  \providecommand{\corollaryname}{Corollary}
  \providecommand{\definitionname}{Definition}
  \providecommand{\examplename}{Example}
  \providecommand{\lemmaname}{Lemma}
  \providecommand{\propositionname}{Proposition}
  \providecommand{\remarkname}{Remark}
\providecommand{\theoremname}{Theorem}

\begin{document}

\title{Framed symplectic sheaves on surfaces}

\author{Jacopo Scalise}

\dedicatory{\emph{SISSA, Trieste - jscalise@sissa.it}}
\begin{abstract}
A framed symplectic sheaf on a smooth projective surface $X$ is a
torsion-free sheaf $E$ together with a trivialization on a divisor
$D\subseteq X$ and a morphism $\Lambda^{2}E\rightarrow\mathcal{O}_{X}$
satisfying some additional conditions. We construct a moduli space
for framed symplectic sheaves on a surface, and present a detailed
study for $X=\mathbb{P}_{\mathbb{C}}^{2}$. In this case, the moduli
space is irreducible and admits an ADHM-type description and a birational proper map
onto the space of framed symplectic ideal instantons. 
\end{abstract}

\maketitle

\section{introduction}

Let $l$ be a fixed line in the complex projective plane $\mathbb{P}_{\mathbb{C}}^{2}$.
An \emph{$l-$framed sheaf of charge} $n$ is defined to be a pair
$(E,a)$ where $E$ is a torsion-free coherent sheaf on $\mathbb{P}_{\mathbb{C}}^{2}$
of generic rank $r$ and $c_{2}(E)=n$, and $a:E\mid_{l}\rightarrow\mathcal{O}_{l}^{\oplus r}$
is an isomorphism, which we call \emph{framing}. Framed sheaves are
parameterized by a fine moduli space, usually denoted $\mathcal{M}(r,n)$.
The interest in this space arises from Gauge theory. An $SU(r)-$instanton
on the four sphere with charge $n$ is a principal $SU(r)$-bundle
endowed with an anti-selfdual connection. As explained in \cite{ADHM},
instantons can be described by means of linear algebraic data. The
upshot here is an interpretation of the moduli space of instantons
as an algebro-geometric object, namely as a the moduli space $\mathcal{M}^{reg}(r,n)$
of holomorphic vector bundles on the projective plane with a framing
on a line, see \cite{Do}. There exists a partial compactification
$\mathcal{M}^{reg}(r,n)\subseteq\mathcal{M}_{0}(r,n)$; this is obtained
allowing instantons to degenerate to a so-called \emph{ideal instanton},
that is, a bundle with a connection whose square curvature density
has distributional degenerations at a finite number of points. This
space, which is also called \emph{Uhlenbeck space}, can be thought
of as an affine $\mathbb{C}-$scheme. It is very singular, and it
does not possess very well defined modular properties; however, there
exists a natural map 
\[
\pi:\mathcal{M}(r,n)\rightarrow\mathcal{M}_{0}(r,n)
\]
which is in fact a resolution of singularities. Both $\mathcal{M}_{0}(r,n)$
and $\mathcal{M}(r,n)$ contain an open subscheme isomorphic to $\mathcal{M}^{reg}(r,n),$
over which $\pi$ is an isomorphism. 

If we change $SU(r)$ for another real simple Lie group $G$, we have
$G-$analogues of $\mathcal{M}^{reg}(r,n)$ (we can take the moduli
space of framed principal $G_{\mathbb{C}}-$bundles) and also for
$\mathcal{M}_{0}(r,n)$ (see for example \cite{Bal,BFG,Ch,NS}), but
this time no modular desingularization of the Uhlenbeck space is known.
Indeed, it is not clear what should be the correct definition of ``weak
principal bundle'' to use in order to obtain a smooth partial compactification,
as one does with torsion-free sheaves in the classical case.

In this paper we focus on the case of the symplectic group $G=SP_{r}$.
We define a \emph{framed symplectic sheaf }to be a framed sheaf $(E,a)$
with a morphism $\varphi:\Lambda^{2}E\rightarrow\mathcal{O}_{\mathbb{P}^{2}}$
which behaves as a symplectic form on the locally free locus of $E$.
The main point of this paper is to prove that framed symplectic sheaves
admit a fine module space as well, denoted $\mathcal{M}_{\Omega}(r,n)$,
which comes with a proper birational morphism onto the symplectic
variant of the Uhlenbeck space.

The paper is organized as follows. Section 2 fixes some notation and
introduces the definition of the moduli functor for framed symplectic
sheaves. Moreover, some lemmas for later use are stated. Section 3
describes the construction of moduli space for a general smooth projective
surface $X$, where we fix a suitable framing divisor $D\subseteq X$,
and provides the computation of the tangent space at a point of the
moduli space. Section 4 specializes to the case $(X,D)=(\mathbb{P}^{2},l)$,
and presents an alternative definition of the moduli space by means
of linear data, in the spirit of the ADHM construction for the classical
case. In section 5 this description is applied in order to prove irreducibility
of the moduli space on $\mathbb{P}^{2}$. Section 6 deals with Uhlenbeck
spaces, and provides some results on the singularities of $\mathcal{M}_{\Omega}(r,n)$.

\subsection*{Acnkowledgments}

This paper covers a part of my PhD thesis. I am very indebdted with my supervisor Ugo Bruzzo for introducing
me to the subject and for his patient guidance through this work.
Also, I wish to thank Francesco Sala, Pietro Tortella, Amar Henni, Alberto Celotto, Marcos Jardim and Tom\'{a}s G\'{o}mez for useful discussions and suggestions. Part of
the paper was written while I was visiting the Universidade Federal
de Santa Catarina (UFSC) and the Instituto Nacional de Matem\'{a}tica
Pura e Aplicada (IMPA). I would like to thank these institutions for
their warm hospitality. This work was partially supported by INDAM-GNSAGA and by PRIN "Geometria delle variet\`{a} algebriche".

\section{Preliminaries and notation}

\subsection{Bilinear forms on sheaves}

Let $X$ be a $\mathbb{K}-$scheme with $\mathbb{K}$ an algebraically
closed field with $char(\mathbb{K})\neq2$, and let $E$ be an $\mathcal{O}_{X}-$module.
The skew-symmetric square $\Lambda^{2}E$ and the symmetric square
$S^{2}E$ of $E$ are defined as the sheafifications of the presheaves
\[
U\mapsto S^{2}E(U),\:U\mapsto\Lambda^{2}E(U).
\]
In particular, they can be naturally written as quotients of $E^{\otimes2}$.
Let $i\in Aut(E^{\otimes2})$ be the natural switch morphism (on stalks:
$i(e\otimes f)=f\otimes e$). Let $G$ be another $\mathcal{O}_{X}-$module
and $\varphi:E^{\otimes2}\rightarrow G$ a morphism. We call $\varphi$
symmetric (resp. skew-symmetric) if $\varphi\circ i=\varphi$ (resp.
$\varphi\circ i=-i$). $S^{2}E$ and $\Lambda^{2}E$ satisfy the obvious
universal properties 
\[
\xymatrix{E^{\otimes2}\ar[r]^{\,\,\,\,\,\forall symm}\ar[d] & G\\
S^{2}E\ar[ur]_{\exists!}
}
\;\xymatrix{E^{\otimes2}\ar[r]^{\,\,\,\forall skew}\ar[d] & G\\
\Lambda^{2}E\ar[ur]_{\exists!}
}
\]
and in fact fit into a split-exact sequence
\[
0\rightarrow S^{2}E\rightarrow E^{\otimes2}\rightarrow\Lambda^{2}E\rightarrow0.
\]

\begin{rem}
A bilinear form $\varphi\in Hom(E^{\otimes2},G)$ naturally corresponds
to an element of 
\[
Hom(E,G\otimes E^{\vee})\cong Hom(E^{\otimes2},G).
\]
If $\varphi$ is symmetric or skew, it corresponds to a unique form
in $Hom(S^{2}E,G)$ or $Hom(\Lambda^{2}E,G)$. We shall make a systematic
abuse of notation by calling $\varphi$ all these morphisms.
\end{rem}
The modules $S^{2}E$ and $\Lambda^{2}E$ are coherent or locally
free if $E$ is, and are well-behaved with respect to pullbacks, meaning
that for a given morphism of schemes $f:Y\rightarrow X$ one has natural
isomorphisms
\[
S^{2}f^{\star}E\cong f^{\star}S^{2}E,\:\Lambda^{2}f^{\star}E\cong f^{\star}\Lambda^{2}E.
\]
The following lemmas will be useful later.
\begin{lem}
\label{lem:KwedgeH} Let $K\rightarrow H\rightarrow E\rightarrow0$
be an exact sequence of coherent $\mathcal{O}_{X}-$modules. There
is an exact sequence
\[
K\otimes H\rightarrow\Lambda^{2}H\rightarrow\Lambda^{2}E\rightarrow0.
\]
If furthermore $H$ is locally free and $K\rightarrow H$ is injective,
define $K\wedge H$ to be the image of the subsheaf $K\otimes H\subseteq H^{\otimes2}$
under $H^{\otimes2}\rightarrow\Lambda^{2}H$. Then there is a natural
isomorphism 
\[
K\wedge H\cong K\otimes H/(K\otimes H\cap S^{2}H)
\]
and thus an exact sequence 
\[
0\rightarrow K\wedge H\rightarrow\Lambda^{2}H\rightarrow\Lambda^{2}E\rightarrow0.
\]

\end{lem}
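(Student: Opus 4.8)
The plan is to reduce the whole statement to elementary multilinear algebra over a commutative ring in which $2$ is invertible. Sheafification, $-\otimes-$, $S^2(-)$, $\Lambda^2(-)$, and the formation of images and kernels all commute with passing to stalks, and exactness of a sequence of $\mathcal{O}_X$-modules is exactness of the stalk sequences; so it suffices to prove the corresponding statements for modules over $R:=\mathcal{O}_{X,x}$, and I suppress $x$ from the notation below.

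The first and crucial step is the following explicit right-exactness: for an $R$-module $M$ and a submodule $M'\subseteq M$, the natural map $\Lambda^2 M\to\Lambda^2(M/M')$ is surjective with kernel $M'\wedge M$, the image of $M'\otimes M\to\Lambda^2 M$. Surjectivity is immediate. To compute the kernel I would construct the inverse of the induced surjection $\Lambda^2 M/(M'\wedge M)\twoheadrightarrow\Lambda^2(M/M')$: the assignment $(\bar m_1,\bar m_2)\mapsto m_1\wedge m_2\bmod M'\wedge M$ is well defined since replacing a lift $m_i$ by $m_i+k$ with $k\in M'$ changes $m_1\wedge m_2$ by an element of $M'\wedge M$; it is $R$-bilinear and alternating, hence factors through a map $\Lambda^2(M/M')\to\Lambda^2 M/(M'\wedge M)$ which is visibly inverse to the previous one. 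Applying this with $M=H$ and $M'=\mathrm{im}(K\to H)$ gives exactness of $M'\otimes H\to\Lambda^2 H\to\Lambda^2 E\to 0$; since $-\otimes H$ is right exact and $K\twoheadrightarrow M'$, the map $K\otimes H\to M'\otimes H$ is surjective, so $K\otimes H$ and $M'\otimes H$ have the same image in $\Lambda^2 H$, which yields the first asserted exact sequence $K\otimes H\to\Lambda^2 H\to\Lambda^2 E\to 0$.

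Now suppose $H$ is locally free and $K\hookrightarrow H$. Flatness of $H$ gives an inclusion $K\otimes H\hookrightarrow H^{\otimes 2}$, so $K\wedge H$, being by definition the image of $K\otimes H$ in $\Lambda^2 H$, equals the image of the composite $K\otimes H\hookrightarrow H^{\otimes 2}\xrightarrow{\,q\,}\Lambda^2 H$; hence $K\wedge H\cong(K\otimes H)/\bigl((K\otimes H)\cap\ker q\bigr)$. It remains to identify $\ker q$ with $S^2 H$ viewed inside $H^{\otimes 2}$ via the splitting of $0\to S^2 H\to H^{\otimes 2}\to\Lambda^2 H\to 0$. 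Since $2$ is invertible, $\tfrac12(\mathrm{id}+i)$ and $\tfrac12(\mathrm{id}-i)$ are complementary idempotents of $H^{\otimes 2}$, exhibiting $H^{\otimes 2}$ as the direct sum of its symmetric tensors — a copy of $S^2 H$ — and its skew tensors, the latter mapping isomorphically onto $\Lambda^2 H$ under $q$; in particular $\ker q=S^2 H$ as subsheaves of $H^{\otimes 2}$. This gives the isomorphism $K\wedge H\cong K\otimes H/(K\otimes H\cap S^2 H)$.

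Finally, the first step applied with $K\hookrightarrow H$ (so that $M'=K$) shows that the kernel of $\Lambda^2 H\to\Lambda^2 E$ is exactly the image of $K\otimes H$, i.e. $K\wedge H$; together with the surjectivity of $\Lambda^2 H\to\Lambda^2 E$ this is the exact sequence $0\to K\wedge H\to\Lambda^2 H\to\Lambda^2 E\to 0$. The only point that is not purely formal is the explicit kernel computation in the first step: because $\Lambda^2$ is not an additive functor, one cannot invoke homological algebra, and the ``lift and reduce modulo $M'\wedge M$'' construction — together with checking independence of the chosen lifts — is the heart of the matter; the rest is bookkeeping, requiring only flatness of locally free sheaves and the symmetric/skew decomposition of $H^{\otimes 2}$ available because $2$ is invertible.
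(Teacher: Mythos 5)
Your proof is correct and complete; the paper states Lemma \ref{lem:KwedgeH} without proof, and your argument is the standard one it implicitly relies on (reduction to stalks, the explicit ``lift and reduce modulo $M'\wedge M$'' computation of $\ker(\Lambda^{2}H\rightarrow\Lambda^{2}E)$, and the idempotent splitting $H^{\otimes2}\cong S^{2}H\oplus\Lambda^{2}H$ available since $char(\mathbb{K})\neq2$). The one step worth making explicit, which you do handle, is that in the non-injective case one must pass from $K\otimes H$ to $\mathrm{im}(K\rightarrow H)\otimes H$ via right-exactness of the tensor product so that both have the same image in $\Lambda^{2}H$.
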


\begin{lem}
\label{lem:gom sol cl subsch}\cite[Lemma 0.9]{GS2} Let $Y$ be a
scheme and $h:\mathcal{F}\rightarrow\mathcal{G}$ a morphism of coherent
sheaves on $X\times Y$. Assume $\mathcal{G}$ is flat over $Y$.
There exists a closed subscheme $Z\subseteq Y$ such that the following
universal property is satisfied: for any $s:S\rightarrow Y$ with
$(1_{X}\times s)^{\star}h=0$, one has a unique factorization of $s$
as $S\rightarrow Z\rightarrow Y$. 
\end{lem}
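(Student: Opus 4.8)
The plan is to represent the vanishing functor by an explicit ideal, produced from a finite free complex computing the relative $H^{0}$ of a flat sheaf. Since the asserted universal property determines $Z$ uniquely, the construction is local on $Y$ and it suffices to build $Z$ over each affine open and then glue; so I would assume $Y=\operatorname{Spec}A$ with $A$ Noetherian (the finite-type setting of the paper). The first move is to make the \emph{source} locally free. Choosing a surjection $\varepsilon\colon\mathcal{E}\twoheadrightarrow\mathcal{F}$ with $\mathcal{E}$ locally free, available since $X\times Y$ is projective over $A$, and setting $h'=h\circ\varepsilon$, right-exactness of pullback makes $(1_{X}\times s)^{\star}\mathcal{E}\to(1_{X}\times s)^{\star}\mathcal{F}$ surjective for every $s$, so $(1_{X}\times s)^{\star}h=0$ if and only if $(1_{X}\times s)^{\star}h'=0$. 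Thus I may replace $\mathcal{F}$ by $\mathcal{E}$ and assume $h\colon\mathcal{E}\to\mathcal{G}$ with $\mathcal{E}$ locally free.

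Next I would turn $h$ into a global section of a flat sheaf. Put $\mathcal{H}=\mathcal{E}^{\vee}\otimes\mathcal{G}$; since $\mathcal{E}$ is locally free and $\mathcal{G}$ is flat over $Y$, $\mathcal{H}$ is again flat over $Y$, and for every $s\colon S\to Y$ there are canonical identifications $\operatorname{Hom}((1_{X}\times s)^{\star}\mathcal{E},(1_{X}\times s)^{\star}\mathcal{G})=H^{0}(X\times S,(1_{X}\times s)^{\star}\mathcal{H})$ compatible with pullback. Under these, $h$ corresponds to a section $\sigma\in H^{0}(X\times Y,\mathcal{H})$, the pullback $(1_{X}\times s)^{\star}h$ corresponds to the pulled-back section, and the vanishing $(1_{X}\times s)^{\star}h=0$ becomes the vanishing of $\sigma$ after pullback. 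So the problem is reduced to the following: for a sheaf $\mathcal{H}$ flat over $Y$ and a section $\sigma$, cut out the locus where $\sigma$ pulls back to zero.

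Here properness of $X$ enters, through cohomology and base change. Because $X\times Y\to Y$ is projective and $\mathcal{H}$ is flat over the Noetherian ring $A$, there is a finite complex $K^{0}\xrightarrow{d}K^{1}\to\cdots$ of finitely generated \emph{free} $A$-modules with $H^{0}(X\times S,(1_{X}\times s)^{\star}\mathcal{H})=\ker(d\otimes_{A}A')$ functorially in every $A$-algebra $A'$ (writing $S=\operatorname{Spec}A'$). The section $\sigma$ is then an element $\kappa\in\ker d\subseteq K^{0}$, and its pullback is the image of $\kappa$ in $K^{0}\otimes_{A}A'$. Since $K^{0}$ is free, writing $\kappa=(\kappa_{1},\dots,\kappa_{r})$ in a basis, this image vanishes precisely when the structure map $A\to A'$ sends every $\kappa_{j}$ to $0$, i.e. when $A\to A'$ factors through $A/I$ with $I=(\kappa_{1},\dots,\kappa_{r})$. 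Hence $Z=\operatorname{Spec}(A/I)$ has the required universal property over the chart, and uniqueness lets these local $Z$ glue to the desired closed subscheme $Z\subseteq Y$.

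The main obstacle is exactly the finiteness used in the last step: flatness of $\mathcal{G}$ is what makes $\mathcal{H}$ flat and feeds the base-change theorem, while projectivity of $X$ is what produces the \emph{finite free} complex $K^{\bullet}$, so that the a priori infinite system of conditions $(1_{X}\times s)^{\star}h=0$ collapses to the vanishing of the single vector $\kappa$ in a finite free module, and is therefore cut out by an honest ideal. Without this finiteness the statement genuinely fails—for non-proper $X$ the locus where a section of a flat sheaf vanishes can be open rather than closed—so the heart of the argument is precisely that flatness together with properness reduce the vanishing of $h$ to the vanishing of the coordinates $\kappa_{j}$.
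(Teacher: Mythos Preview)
The paper does not actually prove this lemma; it merely cites it from \cite[Lemma 0.9]{GS2} and uses it as a black box. So there is no ``paper's own proof'' to compare against.

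Your argument is correct and is essentially the standard one. The reduction to $\mathcal{F}$ locally free is clean, the passage to a section of $\mathcal{H}=\mathcal{E}^{\vee}\otimes\mathcal{G}$ is the right move, and the use of the Grothendieck complex (a bounded complex of finite free $A$-modules universally computing $H^{i}(X\times S,(1_{X}\times s)^{\star}\mathcal{H})$) is exactly what makes the vanishing of $\sigma$ a finitely-generated ideal condition. The one point worth stating more precisely is why the image of $\kappa$ in $K^{0}\otimes_{A}A'$ computes the pulled-back section: since $d(\kappa)=0$, the element $\kappa\otimes 1$ automatically lies in $\ker(d\otimes A')=H^{0}(X\times S,(1_{X}\times s)^{\star}\mathcal{H})$, and the naturality in the Grothendieck complex theorem identifies it with the pullback of $\sigma$. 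You use this implicitly but it is the link that makes the final step valid. Your remarks about where projectivity and flatness enter are accurate; in particular the failure for non-proper $X$ is a good sanity check.
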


\begin{lem}
\label{lem:Spectral tor}\cite[Thm 12.1]{Mc} Let $R\rightarrow S$
be a morphism of unitary commutative rings, let $N$ be an $S-$module
and $N^{\prime}$ be an $R-$module. There exists a spectral sequence
with $E_{p,q}^{2}=Ext_{S}^{q}(Tor_{p}^{R}(S,N^{\prime}),N)$ converging
to $Ext_{R}(N^{\prime},N).$ In particular, if $N^{\prime}$ is $R-$flat,
we get an isomorphism
\[
Ext_{S}^{q}(S\otimes N^{\prime},N)\cong Ext_{R}^{q}(N^{\prime},N).
\]

\end{lem}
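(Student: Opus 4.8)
The plan is to obtain this as one of the classical change-of-rings spectral sequences for $Ext$; since the precise formulation is quoted from \cite[Thm 12.1]{Mc}, I only indicate the argument. First I would fix a projective resolution $P_{\bullet}\rightarrow N^{\prime}$ of $N^{\prime}$ as an $R$-module and set $C_{\bullet}:=S\otimes_{R}P_{\bullet}$. Since each $P_{n}$ is $R$-projective, each $C_{n}$ is $S$-projective, so $C_{\bullet}$ is a bounded-below complex of projective $S$-modules whose $p$-th homology is, by definition, $Tor_{p}^{R}(S,N^{\prime})$. The tensor--hom adjunction supplies for every $n$ a natural isomorphism $Hom_{R}(P_{n},N)\cong Hom_{S}(S\otimes_{R}P_{n},N)$, with $N$ viewed on the left as an $R$-module by restriction along $R\rightarrow S$; naturality upgrades this to an isomorphism of cochain complexes $Hom_{R}(P_{\bullet},N)\cong Hom_{S}(C_{\bullet},N)$, whence $Ext_{R}^{n}(N^{\prime},N)\cong H^{n}(Hom_{S}(C_{\bullet},N))$.

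It then remains to compute $H^{\bullet}(Hom_{S}(C_{\bullet},N))$ via the hyper-$Ext$ spectral sequence of the left-exact contravariant functor $Hom_{S}(-,N)$ applied to $C_{\bullet}$. Concretely one chooses an injective resolution $N\rightarrow I^{\bullet}$ over $S$ and forms the double complex $K^{p,q}=Hom_{S}(C_{p},I^{q})$. Because each $C_{p}$ is $S$-projective, the functor $Hom_{S}(C_{p},-)$ is exact, so the rows $Hom_{S}(C_{p},I^{\bullet})$ are exact in positive degrees and equal $Hom_{S}(C_{p},N)$ in degree $0$; hence one of the two spectral sequences of $K^{\bullet,\bullet}$ degenerates to $H^{\bullet}(Hom_{S}(C_{\bullet},N))\cong Ext_{R}^{\bullet}(N^{\prime},N)$, which fixes the abutment. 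Since $I^{q}$ is injective, the other spectral sequence has $E_{1}^{p,q}=H^{p}(Hom_{S}(C_{\bullet},I^{q}))\cong Hom_{S}(H_{p}(C_{\bullet}),I^{q})$ and therefore second page $E_{p,q}^{2}=Ext_{S}^{q}(H_{p}(C_{\bullet}),N)=Ext_{S}^{q}(Tor_{p}^{R}(S,N^{\prime}),N)$, the asserted $E^{2}$-term. Convergence is automatic because both $C_{\bullet}$ and $I^{\bullet}$ are bounded below, so $K^{\bullet,\bullet}$ is supported in a quadrant.

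For the last assertion, suppose $N^{\prime}$ is $R$-flat. Then $Tor_{p}^{R}(S,N^{\prime})=0$ for $p>0$ while $Tor_{0}^{R}(S,N^{\prime})=S\otimes_{R}N^{\prime}$, so $E_{p,q}^{2}=0$ for $p\neq 0$ and the spectral sequence degenerates at the second page along the line $p=0$. The surviving edge isomorphisms read $Ext_{S}^{q}(S\otimes_{R}N^{\prime},N)\cong Ext_{R}^{q}(N^{\prime},N)$, which is the claimed identity.

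The only genuinely fiddly part is the construction and bookkeeping of the hyper-$Ext$ spectral sequence: one must verify that each row of the double complex is a resolution (this is exactly where projectivity of the $C_{p}$ over $S$ is used) and keep track of the two filtration degrees so that the abutment carries the correct total grading $Ext_{R}^{q-p}(N^{\prime},N)$. This is routine homological algebra with no conceptual obstruction and is precisely the content of the cited \cite[Thm 12.1]{Mc}; in any case only the degenerate, flat case is needed in the sequel.
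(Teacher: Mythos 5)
The paper does not actually prove this lemma; it is quoted verbatim from \cite[Thm 12.1]{Mc} and used as a black box, so there is no internal argument to compare against. Your construction is the standard one for the change-of-rings spectral sequence and it is sound: the base change $C_{\bullet}=S\otimes_{R}P_{\bullet}$ of an $R$-projective resolution, the adjunction $Hom_{R}(P_{n},N)\cong Hom_{S}(C_{n},N)$ identifying the abutment, and the two filtrations of the first-quadrant double complex $K^{p,q}=Hom_{S}(C_{p},I^{q})$ (one collapsing by $S$-projectivity of the $C_{p}$, the other computing $Ext_{S}^{q}(H_{p}(C_{\bullet}),N)$ by injectivity of the $I^{q}$) are all correctly deployed, as is the degeneration in the flat case. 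The one slip is in your closing remark: since both differentials of $K^{\bullet,\bullet}$ raise total degree, the abutment is graded by $p+q$, i.e.\ the spectral sequence reads $Ext_{S}^{q}(Tor_{p}^{R}(S,N^{\prime}),N)\Rightarrow Ext_{R}^{p+q}(N^{\prime},N)$, not $Ext_{R}^{q-p}(N^{\prime},N)$ as you wrote. (A quick sanity check with $S=R/x$, $N^{\prime}=R/x$ for a nonzerodivisor $x$ confirms the $p+q$ convention.) This is immaterial for the application, since in the flat case only the column $p=0$ survives and the edge isomorphism $Ext_{S}^{q}(S\otimes_{R}N^{\prime},N)\cong Ext_{R}^{q}(N^{\prime},N)$ comes out the same either way.
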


\begin{lem}
\label{lem:closem crit}Suppose $f:X\rightarrow Y$ is a morphism
of $\mathbb{K}-$schemes of finite type. Assume that the following
conditions hold:
\begin{enumerate}
\item the map of sets $f(\mathbb{K}):X(\mathbb{K})\rightarrow Y(\mathbb{K})$
is injective;
\item for any closed point $x\in X$ the linear map $Tf_{x}:T_{x}X\rightarrow T_{f(x)}Y$
is a monomorphism;
\item f is proper.
\end{enumerate}
Then $f$ is a closed embedding.
\end{lem}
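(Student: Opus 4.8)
The plan is to show that $f$ is finite, then replace $Y$ by the scheme-theoretic image of $f$ so that the comorphism $\mathcal{O}_{Y}\to f_{*}\mathcal{O}_{X}$ becomes injective, and finally to deduce from (1) and (2) via Nakayama's lemma that it is also surjective. I will use repeatedly that, $\mathbb{K}$ being algebraically closed and $X,Y$ of finite type over $\mathbb{K}$, both schemes are Jacobson, their $\mathbb{K}$-points are exactly their closed points, and every closed point has residue field $\mathbb{K}$.

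The first step is to prove that $f$ is quasi-finite, hence (being proper) finite. For a closed point $y\in Y$ the fibre $X_{y}$ is proper over $\kappa(y)=\mathbb{K}$ by (3) and has at most one closed point by (1); since a proper scheme of positive dimension over a field has infinitely many closed points, $X_{y}$ is finite. Thus $\dim_{x}X_{f(x)}=0$ whenever $x$ lies over a closed point of $Y$; but $\{x\in X:\dim_{x}X_{f(x)}\ge 1\}$ is closed (upper semicontinuity of fibre dimension for finite-type morphisms), so if nonempty it would contain a closed point of $X$, whose image is a closed point of $Y$ --- a contradiction. Hence every fibre of $f$ is finite, so $f$ is quasi-finite, and then finite by the theorem ``proper $+$ quasi-finite $\Rightarrow$ finite''.

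Now $f_{*}\mathcal{O}_{X}$ is a coherent $\mathcal{O}_{Y}$-algebra; let $\mathcal{I}=\ker(\mathcal{O}_{Y}\to f_{*}\mathcal{O}_{X})$ and $\iota\colon Y'=V(\mathcal{I})\hookrightarrow Y$. Then $f$ factors as $X\xrightarrow{g}Y'\xrightarrow{\iota}Y$ with $g$ finite, $\mathcal{O}_{Y'}\hookrightarrow g_{*}\mathcal{O}_{X}$ injective, and $g$ surjective (a finite injective morphism is surjective, by lying-over). Since $\iota$ is a closed immersion it is a monomorphism with injective tangent maps, so $g$ still satisfies (1) and (2); it therefore suffices to show that $g$ is an isomorphism. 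Equivalently, I may assume from the start that $f$ is finite and surjective with $\mathcal{O}_{Y}\hookrightarrow f_{*}\mathcal{O}_{X}$, and must show this inclusion is an equality. As this is a statement about the vanishing of a coherent cokernel sheaf, it can be checked after localizing at a closed point of $Y$, reducing to a finite injective extension $A\hookrightarrow B$ of Noetherian local $\mathbb{K}$-algebras with $A$ having residue field $\mathbb{K}$. By (1) together with surjectivity, $B$ is local with maximal ideal $\mathfrak{n}$ lying over $\mathfrak{m}_{A}$, and $B/\mathfrak{n}$, being a finite extension of $\mathbb{K}$ (as $B$ is finite over $A$), equals $\mathbb{K}$. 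Condition (2) says $\mathfrak{m}_{A}/\mathfrak{m}_{A}^{2}\to\mathfrak{n}/\mathfrak{n}^{2}$ is onto, i.e. $\mathfrak{n}=\mathfrak{m}_{A}B+\mathfrak{n}^{2}$, whence $\mathfrak{n}=\mathfrak{m}_{A}B$ by Nakayama. Then $B/\mathfrak{m}_{A}B=B/\mathfrak{n}=\mathbb{K}$ is already in the image of $A$, so $B=A+\mathfrak{m}_{A}B$, and a second application of Nakayama to the finite $A$-module $B/A$ gives $A=B$. Hence $g$ is an isomorphism and $f=\iota\circ g$ is a closed immersion.

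The main obstacle I anticipate is the first step: extracting quasi-finiteness --- and thence, through the nontrivial theorem ``proper $+$ quasi-finite $\Rightarrow$ finite'', finiteness --- from hypotheses (1) and (3), together with the bookkeeping needed to see that (1) and (2) are inherited by the map onto the scheme-theoretic image and that the residue fields in the final Nakayama argument are genuinely $\mathbb{K}$ (which is exactly where $\mathbb{K}=\overline{\mathbb{K}}$ is used). An alternative packaging would be to prove that $f$ is unramified (from (2), by Nakayama on cotangent spaces together with openness of the unramified locus and the Jacobson property) and radicial, hence a monomorphism, and then invoke ``a proper monomorphism is a closed immersion''; but verifying radicialness appears to require essentially the same reduction to the finite case.
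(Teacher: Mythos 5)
The paper states this lemma without proof, treating it as a standard embedding criterion, so there is no argument of the author's to compare yours against; I have therefore checked your proof on its own terms, and it is correct and complete. The three stages all hold up: (i) the deduction of quasi-finiteness from (1) and (3) is sound, including the care you take with upper semicontinuity of fibre dimension and the Jacobson property to pass from ``fibres over closed points are finite'' to ``all fibres are finite'', which is genuinely needed before invoking proper $+$ quasi-finite $\Rightarrow$ finite; (ii) the reduction to the scheme-theoretic image correctly preserves hypotheses (1) and (2), since precomposing with a closed immersion cannot create collisions of points or of tangent vectors; (iii) the local algebra is right: $B$ is local because its maximal ideals are exactly the points of the fibre, which all have residue field $\mathbb{K}$ by finiteness and $\mathbb{K}=\overline{\mathbb{K}}$, hence are $\mathbb{K}$-points, of which (1) allows only one; and the two applications of Nakayama (first to $\mathfrak{n}/\mathfrak{m}_{A}B$, then to $B/A$) are the standard way to convert surjectivity on cotangent spaces into $A=B$. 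Your closing remark is also accurate: the alternative packaging via ``radicial $+$ unramified $\Rightarrow$ monomorphism'' and ``a proper monomorphism is a closed immersion'' proves the same statement, but establishing radicialness over non-closed points requires essentially the same fibrewise analysis, so nothing is saved. The only (very minor) stylistic point is that in the quasi-finiteness step you could avoid the semicontinuity argument by citing the version of Zariski's Main Theorem that only requires finiteness of fibres over closed points, but your route is elementary and correct as written.
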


\subsection{Framed symplectic sheaves}

Let $X$ be a projective surface, $D\subseteq X$ a big and nef divisor
and $W$ a finite-dimensional vector space.
\begin{defn}
A framed sheaf on $X$ is a pair $(E,\alpha)$ where $E$ is a torsion-free
sheaf of rank $r$ on $X$ and $\alpha:E_{D}\rightarrow\mathcal{O}_{D}\otimes W$
is an isomorphism. A morphism of framed sheaves $(E,\alpha)\rightarrow(E^{\prime},\alpha^{\prime})$
is a morphism of sheaves $f:E\rightarrow E^{\prime}$ where $\alpha^{\prime}\circ f_{D}=\lambda\circ\alpha$
for a nonzero scalar $\lambda\in\mathbb{K}$.
\end{defn}
If $(E,\alpha)$ is a framed sheaf, $E$ is locally free in a neighborhood
of $D$.

Framed sheaves are easily defined in families. An $S-$family of framed
sheaves on $X$ for a given scheme $S$ consists of a pair $(\mathcal{E},\alpha)$
where $\mathcal{E}$ is a $S-$flat sheaf on $X_{S}$ and $\alpha$
is an isomorphism 
\[
\alpha:\mathcal{E}_{D_{S}}\rightarrow D_{S}\otimes W.
\]
The pullback family via a morphism $S\rightarrow S^{\prime}$ is well
defined. We obtain a functor 
\[
\mathfrak{M}_{X}^{D}(r,n):Sch^{op}\rightarrow Set
\]
assigning to $S$ the set of isomorphism classes of $S-$families
of framed sheaves. 

Fix now some symplectic form $\Omega:\Lambda^{2}W\rightarrow\mathbb{K}$
(this forces $r=dim(W)$ to be even). This yields a symplectic form
on $\mathcal{O}_{D}\otimes W$, still denoted by $\Omega$. 
\begin{defn}
A framed symplectic sheaf is a triple $(E,a,\varphi)$ where $(E,a)$
is a framed sheaf with $det(E)\cong\mathcal{O}_{X}$ and $\varphi:\Lambda^{2}E\rightarrow\mathcal{O}_{X}$
is a morphism satisfying 
\begin{equation}\label{eq:compatib}
\varphi_{D}=\Omega\circ\Lambda^{2}a.
\end{equation}
\end{defn}

\begin{rem}The morphism $\varphi:E\rightarrow E^{\vee}$ is an isomorphism once
restricted to $D$; consequently, the same holds on an open neighborhood
of $D$. As $det(E)\cong det(E^{\vee})\cong\mathcal{O}_{X}$, we obtain $c_{1}(E)=0$. Furthermore,
we deduce that $det(\varphi)$ is a nonzero constant. It follows that $\varphi$
induces an isomorphism $E_{U}\rightarrow E_{U}^{\vee}$ on the entire
open $2-$codimensional set $X\backslash sing(E)=U\supset D$. \\
If $(E,a)$ is a framed sheaf with $c_{1}(E)=0$ and we are given a morphism $\varphi:\Lambda^{2}E\rightarrow\mathcal{O}_{X}$ satisfying the compatibility condition \ref{eq:compatib}, 
we get in fact $det(E)\cong\mathcal{O}_{X}$. Indeed, since $\varphi$ is an isomorphism in a neighborhood of $D$, we obtain an exact sequence
\[
0\rightarrow E\rightarrow E^{\vee}\rightarrow coker(\varphi)\rightarrow0.
\]
The sheaf  $coker(\varphi)$ must be supported on a subscheme of dimension strictly smaller than 2, and $c_{1}(coker(\varphi))=0$ forces
\[
dim(coker(\varphi))=0.
\]
This implies that $\varphi$ must be an isomorphism outside a zero-dimensional subscheme of $X$, from which 
\[
det(E)\cong det(E^{\vee})
\]
follows. Finally, consider the dual map
\[
\varphi^{\vee}:E^{\vee \vee}\rightarrow E^{\vee};
\]
it is a skew-symmetric isomorphism of vector bundles, i.e. a symplectic form. We may conclude $det(E^\vee)\cong\mathcal{O}_{X}$.
From now on, we will always omit the hypothesis on the determinant while working with framed symplectic sheaves, as we will consider exclusively framed sheaves whose first Chern class vanishes.  Thus, the discrete invariants of framed symplectic sheaves will be the positive integer $r$
and the nonnegative integer $c_{2}$.
\end{rem}

We define a morphism of framed symplectic sheaves $(E,\alpha,\varphi)\rightarrow(E^{\prime},\alpha^{\prime},\varphi^{\prime})$
to be a morphism $f$ of framed sheaves where $\varphi^{\prime}\circ\Lambda^{2}f=\lambda\varphi$
for a (again nonzero) $\lambda\in\mathbb{K}$.
\begin{rem}
\label{rem:rigidity}We shall soon see (Prop. \ref{prop:Bound+uniq})
that, for a given pair of framed sheaves $(E,a)$ and $(F,b)$ with the same invariants, there
is at most one isomorphism $f:E\rightarrow F$ satisfying 
\[
a=b\circ f_{D}.
\]
This implies that $(E,a)$ can support \emph{at most one }structure
of framed symplectic sheaf, in the following sense. If $\varphi:\Lambda^{2}E\rightarrow\mathcal{O}_{X}$
is a symplectic form, it induces an isomorphism $\varphi^{\vee}:E^{\vee\vee}\rightarrow E^{\vee}$. Furthermore, $E^{\vee\vee}$ and $E^{\vee}$ inherit
framings from $(E,a)$, and $\varphi^{\vee}$ preserves these framings,
as $\varphi$ is $\Omega-$compatible with $a$. As a consequence,
any other symplectic form $\varphi^{\prime}$ on $E$ satisfies $(\varphi^{\prime})^{\vee}=\varphi^{\vee}$,
from which $\varphi=\varphi^{\prime}$ follows. In particular:\end{rem}
\begin{lem}
If $(E,\alpha,\varphi)$ is a framed symplectic sheaf, $Hom(\Lambda^{2}E,\mathcal{O}_{X}(-D))=0$
(i.e., the symplectic form has no nontrivial infinitesimal automorphisms).\end{lem}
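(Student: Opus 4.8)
The plan is to derive the vanishing from the uniqueness of the framed symplectic structure recorded in Remark~\ref{rem:rigidity}: any nonzero $\psi\in\mathrm{Hom}(\Lambda^{2}E,\mathcal{O}_{X}(-D))$ would produce a second such structure on the framed sheaf $(E,a)$, which is impossible.

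First I would apply $\mathrm{Hom}(\Lambda^{2}E,-)$ to the defining exact sequence $0\to\mathcal{O}_{X}(-D)\xrightarrow{\iota}\mathcal{O}_{X}\to\mathcal{O}_{D}\to0$, obtaining an inclusion $\mathrm{Hom}(\Lambda^{2}E,\mathcal{O}_{X}(-D))\hookrightarrow\mathrm{Hom}(\Lambda^{2}E,\mathcal{O}_{X})$; write $\bar{\psi}:=\iota\circ\psi:\Lambda^{2}E\to\mathcal{O}_{X}$ for the image of $\psi$ there. The one point that must be checked is that $\bar{\psi}_{D}=0$ as a morphism $\Lambda^{2}E_{D}\to\mathcal{O}_{D}$. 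This follows because the composite $\iota$ restricted to $D$ is zero — locally $\iota$ is multiplication by an equation of $D$, and more precisely tensoring the defining sequence with $\mathcal{O}_{D}$ shows $\iota\otimes_{\mathcal{O}_{X}}\mathcal{O}_{D}=0$ — so $\bar{\psi}\otimes_{\mathcal{O}_{X}}\mathcal{O}_{D}=0$; and since $E$ is locally free near $D$, the $\Lambda^{2}$--pullback compatibility identifies $\Lambda^{2}E\otimes\mathcal{O}_{D}\cong\Lambda^{2}E_{D}$, so $\bar{\psi}\otimes\mathcal{O}_{D}$ is exactly the restricted morphism $\bar{\psi}_{D}$.

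Next I would observe that $\varphi+\bar{\psi}:\Lambda^{2}E\to\mathcal{O}_{X}$ therefore still satisfies the compatibility~\eqref{eq:compatib}, namely $(\varphi+\bar{\psi})_{D}=\varphi_{D}=\Omega\circ\Lambda^{2}a$. Since $c_{1}(E)=0$, the remark following the definition of a framed symplectic sheaf then applies to $\varphi+\bar{\psi}$ and shows that $(E,a,\varphi+\bar{\psi})$ is again a framed symplectic sheaf — in particular $\det E\cong\mathcal{O}_{X}$ and $\varphi+\bar{\psi}$ restricts to an isomorphism on $X\setminus \mathrm{sing}(E)$, so the argument of Remark~\ref{rem:rigidity} (which rests on Proposition~\ref{prop:Bound+uniq}) applies verbatim. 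That remark says $(E,a)$ supports at most one framed symplectic structure; hence $\varphi+\bar{\psi}=\varphi$, so $\bar{\psi}=0$, and therefore $\psi=0$.

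I do not anticipate a genuine obstacle, since the substance has been front-loaded into the two preceding remarks (that \eqref{eq:compatib} together with $c_{1}(E)=0$ already produces a framed symplectic sheaf, and that such a structure is unique on a fixed framed sheaf). The only delicate step is the identification $\bar{\psi}_{D}=0$ above, and it is elementary. A self-contained alternative would be to use that $\varphi$ is an isomorphism on $U:=X\setminus\mathrm{sing}(E)$ in order to rewrite $\psi|_{U}$ as a global section of $\mathcal{E}nd(E_{U})\otimes\mathcal{O}_{U}(-D)$; but vanishing of $H^{0}$ of this sheaf is immediate only on the trace summand $\mathcal{O}_{U}(-D)$ (which has no sections, as $D$ is big, hence effective and nonzero), and is awkward on the trace-free summand without a stability hypothesis on $E$ — which is precisely why routing through Remark~\ref{rem:rigidity} is the efficient route.
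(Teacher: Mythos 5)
Your argument is correct and is essentially the paper's own proof: both view $\psi$ as a morphism $\Lambda^{2}E\rightarrow\mathcal{O}_{X}$ vanishing on $D$, observe that adding it to $\varphi$ preserves the compatibility \eqref{eq:compatib} (so that $(E,a,\varphi+\bar{\psi})$ is again a framed symplectic sheaf), and conclude by the uniqueness of the symplectic structure from Remark~\ref{rem:rigidity}. The paper phrases this with $\psi_{\lambda}=\psi+\lambda\varphi$ and a rescaled framing $\lambda^{1/2}\alpha$, but that is the same argument as your $\lambda=1$ version; your explicit verification that $\bar{\psi}_{D}=0$ is a welcome elaboration of a step the paper leaves implicit.
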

\begin{proof}
Let $\psi:\Lambda^{2}E\rightarrow\mathcal{O}_{X}(-D)$ be a morphism.
By means of the exact sequence 
\[
0\rightarrow\mathcal{O}_{X}(-D)\rightarrow\mathcal{O}_{X}\rightarrow\mathcal{O}_{D}\rightarrow0
\]
we can think of $\psi$ as a morphism $\Lambda^{2}E\rightarrow\mathcal{O}_{X}$
which vanishes on $D$. Now, for a nonzero scalar $\lambda$ consider
$\psi_{\lambda}=\psi+\lambda\varphi$. If we choose a square root
of $\lambda^{1/2}$, we obtain that $(E,\lambda^{1/2}\alpha,\psi_{\lambda})$
is a framed symplectic sheaf, but also $(E,\lambda^{1/2}\alpha,\lambda\varphi)$
is. This forces $\psi_{\lambda}=\lambda\varphi$, i.e. $\psi=0$.
\end{proof}
We can define framed symplectic sheaves in families again; an $S-$family
of framed symplectic sheaves will be a triple $(\mathcal{E},\alpha,\Phi)$
with $(\mathcal{E},\alpha)$ an $S-$family of framed sheaves, and
$\Phi:\Lambda^{2}\mathcal{E}\rightarrow\mathcal{O}_{X_{S}}$ a morphism
such that $\Phi\mid_{D_{S}}=\Omega\circ\Lambda^{2}\alpha.$ The corresponding
functor will be denoted $\mathfrak{M}_{X,\Omega}^{D}(r,n)$.

\section{Moduli spaces of framed symplectic sheaves on surfaces}

\subsection{Framed sheaves as Huybrechts-Lehn framed pairs}

It is possible to construct a fine moduli space for the functor $\mathfrak{M}_{X}^{D}(r,n)$.
Let $F$ be a fixed sheaf on $X$. A framed module is a pair $(E,a)$
where $E$ is a coherent sheaf on $X$, and $a:E\rightarrow F$ is
a morphism. A framed sheaf $(E,a)$ is a special example of framed
module, with $F=\mathcal{O}_{D}\otimes W$, $E$ torsion-free and
$a$ inducing an isomorphism once restricted to $D$. In \cite[Def. 1.1 and Thm 2.1]{HL},
a (semi)stability condition depending on a numerical polynomial $\delta$
and on a fixed polarization $H$ is defined, and a boundedness result
is provided for framed modules. 

Let $c\in H^{\star}(X,\mathbb{Q})$. In \cite[Thm 3.1]{BM}, it is
shown that there exist a polarization $H$ and a numerical polynomial
$\delta$ such that any framed sheaf $(E,a)$ with Chern character
$c(E)=c$ is $\delta$-stable as a framed module. This is a crucial
step in order to realize the moduli space of framed sheaves $\mathcal{M}_{X}^{D}(r,n)$
as an open subscheme of the moduli space of $\delta$-semistable framed
modules as defined in \cite{HL}. 

On the other hand, in \cite{GS1} the authors present the construction
of a coarse moduli space for semistable symplectic sheaves. A symplectic
sheaf is a pair $(E,\varphi)$ where $E$ is a coherent torsion-free
sheaf on $X$ and $\varphi:\Lambda^{2}E\rightarrow\mathcal{O}_{X}$
is a morphism inducing a symplectic form on the maximal open subset
of $X$ over which $E$ is locally free. We will construct a fine
moduli space for $\mathfrak{M}_{X,\Omega}^{D}(r,n)$ by means of a
blend of the previous two constructions. Fix two integers $r>0$ and
$n\geq0$, let $c\in H^{\star}(X,\mathbb{Q})$ be the Chern character
of a sheaf $E\in Coh(X)$ with generic rank $r$, $c_{1}(E)=0$ (as
we shall be working with symplectic sheaves) and $c_{2}(E)=n$, and
fix $H$ a polarization as in \cite[Thm 3.1]{BM}. Let $P_{r,n}=P_{E}$
be the corresponding Hilbert polynomial. Keeping now in mind that
in this setting framed sheaves are a particular type of \emph{stable}
framed modules, we obtain the following proposition as an immediate
consequence of \cite[Thm. 2.1]{HL} and \cite[Lemma 1.6]{HL}.
\begin{prop}
\label{prop:Bound+uniq}Let $P=P_{r,n}$. 
\begin{enumerate}
\item There exists a positive integer $m_{0}$ such that for any $m\geq m_{0}$
and for any framed sheaf $(E,a)$ on $X$ with Hilbert polynomial
$P$, $E$ is $m-$regular, $H^{i}(X,\mathcal{O}_{X}(m))=0\:\forall i>0,$
$H^{1}(X,\mathcal{O}_{D}(m))=0$ and $P(m)=h^{0}(E(m))$. 
\item Any morphism of framed sheaves $(E,a),\,(F,b)$ with the same Hilbert
polynomial is an isomorphism. Furthermore, there exists a unique morphism
$f:(E,a)\rightarrow(F,b)$ satisfying $f{}_{D}\circ b=a$, and any
other morphism between them is a nonzero multiple of $f$.
\end{enumerate}
\end{prop}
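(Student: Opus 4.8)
The plan is to deduce both assertions from the theory of $\delta$-semistable framed modules of \cite{HL}, exploiting the fact, established in \cite[Thm 3.1]{BM}, that for our fixed polarization $H$ and numerical polynomial $\delta$ every framed sheaf $(E,a)$ with Chern character $c$, hence with Hilbert polynomial $P=P_{r,n}$, is a $\delta$-\emph{stable} framed module in the sense of \cite{HL}. In particular, framed sheaves with Hilbert polynomial $P$ form a subclass of a well-understood moduli problem, and the whole proof consists in transporting two results across: the boundedness theorem for part (1) and a lemma on homomorphisms for part (2).

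For part (1) the input is the boundedness theorem \cite[Thm 2.1]{HL}: the family of $\delta$-semistable framed modules with Hilbert polynomial $P$ is bounded. By Kleiman's criterion a bounded family of sheaves on the projective surface $X$ has uniformly bounded Castelnuovo--Mumford regularity, so there is an integer $m_1$ such that every framed sheaf $(E,a)$ with Hilbert polynomial $P$ is $m$-regular for all $m\geq m_1$. Enlarging $m_1$ to some $m_0$, Serre vanishing for the ample line bundle $\mathcal{O}_X(1)$ on $X$ yields $H^i(X,\mathcal{O}_X(m))=0$ for $i>0$ and $m\geq m_0$, and Serre vanishing for the ample line bundle $\mathcal{O}_D(1)$ on $D$ yields $H^1(X,\mathcal{O}_D(m))=0$ for $m\geq m_0$. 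Finally, $m$-regularity of $E$ implies $H^i(E(m))=0$ for all $i>0$ and $m\geq m_0$, and hence $h^0(E(m))=\chi(E(m))=P(m)$.

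For part (2) I would appeal to \cite[Lemma 1.6]{HL}, which controls homomorphisms between framed modules with equal reduced Hilbert polynomial when at least one of them is stable: the corresponding space of homomorphisms of framed modules is at most one-dimensional, and if the two underlying Hilbert polynomials agree then any nonzero element of it is an isomorphism. Since $(E,a)$ and $(F,b)$ are both $\delta$-stable framed modules with Hilbert polynomial $P$, a morphism of framed sheaves $f:(E,a)\to(F,b)$ is in particular a nonzero homomorphism of framed modules, hence an isomorphism; writing $b\circ f_D=\lambda a$ with $\lambda\neq 0$, the rescaled morphism $\lambda^{-1}f$ satisfies $b\circ f_D=a$. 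For uniqueness and the final claim, suppose $f,f'$ are morphisms of framed sheaves $(E,a)\to(F,b)$ with $b\circ f_D=\lambda a$ and $b\circ f'_D=\lambda' a$; then $g=\lambda' f-\lambda f'$ satisfies $b\circ g_D=0$, so $g_D=0$ because $b$ restricts to an isomorphism on $D$. Thus $g$ is a homomorphism of framed modules with vanishing framing scalar, lying in the at most one-dimensional $\mathrm{Hom}$-space, which is spanned by the isomorphism $f$ (whose framing scalar is $1$); it follows that $g=0$, so $f'$ is a nonzero scalar multiple of $f$, and taking $\lambda=\lambda'=1$ gives uniqueness. This is the rigidity anticipated in Remark~\ref{rem:rigidity}.

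The step I expect to require the most care is not conceptual but a matter of conventions: one must reconcile the setup of \cite{HL} --- where a homomorphism of framed modules is any $g$ with $\alpha_2\circ g=\lambda\alpha_1$ for a scalar $\lambda$ possibly equal to zero, and where $\delta$-(semi)stability refers to a $\delta$-twisted Hilbert polynomial --- with the conventions in force here, where morphisms of framed sheaves are required to respect the framings up to a \emph{nonzero} scalar and the invariant attached to the moduli problem is the ordinary Hilbert polynomial $P$. Once it is checked, using \cite[Thm 3.1]{BM}, that this choice of $H$ and $\delta$ places all framed sheaves with invariants $(r,n)$ strictly in the $\delta$-stable locus --- so that $\delta$-semistability coincides with $\delta$-stability and the twisted Hilbert polynomials are determined by $P$ --- the two cited results of \cite{HL} apply without modification and both statements follow at once.
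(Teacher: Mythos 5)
Your proposal is correct and follows exactly the route the paper takes: the paper states this proposition as an immediate consequence of \cite[Thm 2.1]{HL} and \cite[Lemma 1.6]{HL}, using \cite[Thm 3.1]{BM} to place all framed sheaves with these invariants in the $\delta$-stable locus, and offers no further detail. Your write-up simply supplies the routine intermediate steps (boundedness implying uniform regularity, Serre vanishing, and the one-dimensionality argument for uniqueness), all of which are sound.
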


\subsection{Parameter spaces}

We need to recall how to endow the set of isomorphism classes of framed
sheaves $\mathcal{M}_{X}^{D}(r,n)$ with a scheme structure, which
makes it a fine moduli space for the functor $\mathfrak{M}_{X,\Omega}^{D}(r,n)$
(for the full details, see \cite{HL}), as it will be useful for constructing
the moduli space $\mathcal{M}_{X,\Omega}^{D}(r,n)$. The construction
is standard; we obtain both moduli spaces as geometric quotients of
suitable parameter spaces, defined using $Quot$ schemes. The relevant
parameter spaces will be defined in this subsection.

Fix a polynomial $P=P_{r,n}$ and a positive integer $m\gg0$ as in
Prop. \ref{prop:Bound+uniq}, and let $V$ be a vector space of dimension
$P(m)$ . Let $H=V\otimes\mathcal{O}_{X}(-m)$. Consider the projective
scheme
\[
Hilb(H,P)\times\mathbb{P}(Hom(V,H^{0}(\mathcal{O}_{D}(m))\otimes W)^{\vee})=:Hilb\times P_{fr},
\]
where $Hilb(H,P)$ is the Grothendieck $Quot$ scheme parameterizing
equivalence classes of quotients 
\[
q:H\rightarrow E,\,P_{E}=P
\]
 on $X$. Define $Z$ as the subset of pairs $([q:H\rightarrow E],A)$
such that the map $H\rightarrow\mathcal{O}_{D}\otimes W$ induced
by $A$ factors through $E$:
\[
\xymatrix{H\ar[d]_{q}\ar[r]^{A} & \mathcal{O}_{D}\otimes W\\
E\ar[ur]_{\bar{A}}
}
\]

$Z$ can be in fact interpreted as a closed subscheme as follows.
Choose a universal quotient 
\[
q_{Hilb}:V\otimes\mathcal{O}_{X\times Hilb}\otimes p_{X}^{\star}\mathcal{O}_{X}(-m)=:\mathcal{H}\twoheadrightarrow\mathcal{E}.
\]
The pullback of the universal map 
\[
V\otimes\mathcal{O}_{P_{fr}}\rightarrow H^{0}(\mathcal{O}_{D}(m)\otimes W)\otimes\mathcal{O}_{P_{fr}}
\]
to $X\times Hilb\times P_{fr}$ yields a morphism
\[
\mathcal{A}_{univ}:V\otimes\mathcal{O}_{X\times Hilb\times P_{fr}}\otimes p_{X}^{\star}\mathcal{O}_{X}(-m)\rightarrow\mathcal{O}_{D\times Hilb\times P_{fr}}\otimes W.
\]
The pullback of $q_{Hilb}$, induces a diagram 
\[
\xymatrix{0\ar[r] & \mathcal{K}\ar[r]\ar[dr]_{\mathcal{A}_{univ}^{\mathcal{K}}} & V\otimes\mathcal{O}_{X\times Hilb\times P_{fr}}\otimes p_{X}^{\star}\mathcal{O}_{X}(-m)\ar[r]\ar[d]^{\mathcal{A}_{univ}} & p_{Hilb\times X}^{\star}(\mathcal{E})\ar[r]\ar@{-->}[dl] & 0\\
 &  & \mathcal{O}_{D\times Hilb\times P_{fr}}\otimes W
}
\]
The closed subscheme $Z\subseteq Hilb\times P_{fr}$ we are looking
for is the one defined by Lemma \ref{lem:gom sol cl subsch}, where
$Y=Hilb\times P_{fr}$ and $h=\mathcal{A}_{univ}^{\mathcal{K}}$.
We denote by $\overset{\circ}{Z}\subseteq Z$ the open subscheme defined
by requiring that the pullback $a=\bar{A}_{D}:E_{D}\rightarrow\mathcal{O}_{D}\otimes W$
is a framing. 

Similarly, we can take the product 
\[
Hilb(H,P)\times\mathbb{P}(Hom(\Lambda^{2}V\rightarrow H^{0}(\mathcal{O}_{X}(2m)))^{\vee})=:Hilb\times P_{symp}
\]
and consider the closed subscheme $Z^{\prime}\subseteq Hilb\times P_{symp}$
given by pairs $([q],\phi)$ such that the map $\Lambda^{2}H\rightarrow\mathcal{O}_{X}$
induced by $\phi$ descends to some $\varphi:\Lambda^{2}E\rightarrow\mathcal{O}_{X}$:
\[
\xymatrix{\Lambda^{2}H\ar[d]_{\Lambda^{2}q}\ar[r]^{\phi} & \mathcal{O}_{X}\\
\Lambda^{2}E\ar[ur]_{\varphi}
}
\]
 We define the very last closed subscheme $Z_{\Omega}\subseteq Hilb\times P_{fr}\times P_{symp}$
as follows. First, consider the scheme-theoretic intersection 
\[
p_{Hilb\times P_{fr}}^{-1}(Z)\cap p_{Hilb\times P_{symp}}^{-1}(Z^{\prime}),
\]
whose closed points are triples $([q],A,\phi)$ satisfying: $A$ descends
to $\alpha:E\rightarrow\mathcal{O}_{D}$, $\phi$ descends to $\varphi:\Lambda^{2}E\rightarrow\mathcal{O}_{X}$.
This scheme has again a closed subscheme defined by triples satisfying
the following compatibility on $D$:
\[
\varphi_{D}=\Omega\circ\Lambda^{2}\alpha_{D}:\Lambda^{2}E_{D}\rightarrow\mathcal{O}_{D}.
\]
Call this subscheme $Z_{\Omega}$. We denote by $\overset{\circ}{Z}_{\Omega}$
the preimage of $\overset{\circ}{Z}\subseteq Z$ under the projection
$\pi:Z_{\Omega}\rightarrow Z$.
\begin{thm}
\label{thm: clemb paramsp}The restriction $\overset{\circ}{\pi}:\overset{\circ}{Z}_{\Omega}\rightarrow\overset{\circ}{Z}$
is a closed embedding. \end{thm}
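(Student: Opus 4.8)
The plan is to invoke Lemma \ref{lem:closem crit}: both $\overset{\circ}{Z}_{\Omega}$ and $\overset{\circ}{Z}$ are of finite type over $\mathbb{K}$, and $\overset{\circ}{\pi}$ is the restriction of the projection $p_{Hilb\times P_{fr}}$, so I would check in turn that $\overset{\circ}{\pi}$ is injective on $\mathbb{K}$-points, that every tangent map $T\overset{\circ}{\pi}_{z}$ is a monomorphism, and that $\overset{\circ}{\pi}$ is proper.

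For the first point I would argue by rigidity. A $\mathbb{K}$-point of $\overset{\circ}{Z}$ amounts to a quotient $q\colon H\twoheadrightarrow E$ together with a class $[A]$ inducing a framing $a\colon E_{D}\to\mathcal{O}_{D}\otimes W$, and a $\mathbb{K}$-point of $\overset{\circ}{Z}_{\Omega}$ over it adds a class $[\phi]$ descending to some $\varphi\colon\Lambda^{2}E\to\mathcal{O}_{X}$ with $\varphi_{D}=\Omega\circ\Lambda^{2}a$ for appropriately normalized representatives. Since $c_{1}(E)=0$, the Remark following the definition of a framed symplectic sheaf shows that $\varphi$ is then a symplectic form, so $(E,a)$ underlies a framed symplectic sheaf; Remark \ref{rem:rigidity} then says the symplectic form compatible with the framing $a$ is unique, and as $\Lambda^{2}q$ is an epimorphism this unique $\varphi$ determines $[\phi]$. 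Hence each fibre of $\overset{\circ}{\pi}$ contains at most one $\mathbb{K}$-point.

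For the tangent maps, I would fix a closed point $z=([q],[A],[\phi])$ of $\overset{\circ}{Z}_{\Omega}$ with associated framed symplectic sheaf $(E,a,\varphi)$ and compute $\ker T\overset{\circ}{\pi}_{z}$. Since $\overset{\circ}{\pi}$ is induced by $p_{Hilb\times P_{fr}}$, this kernel consists of the tangent vectors of $\overset{\circ}{Z}_{\Omega}$ at $z$ with vanishing components along $Hilb$ and $P_{fr}$, i.e. of the tangent vectors $\dot{\phi}\in T_{[\phi]}P_{symp}$ for which the equations defining $Z_{\Omega}$ remain satisfied to first order when $q$ and $[A]$ are kept fixed. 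Holding $q$ fixed, the requirement that $\phi$ descend through $\Lambda^{2}q$ forces $\dot{\phi}$ to kill $\ker\Lambda^{2}q$, hence to descend to some $\dot{\varphi}\colon\Lambda^{2}E\to\mathcal{O}_{X}$; holding $[A]$ fixed, the compatibility on $D$ forces $(\dot{\varphi})_{D}$ to be a scalar multiple of $\varphi_{D}$, and after subtracting that multiple of $\varphi$ --- which leaves the class of $\dot{\phi}$ in $T_{[\phi]}P_{symp}$ unchanged --- I may assume $(\dot{\varphi})_{D}=0$, so that $\dot{\varphi}$ factors through $\mathcal{O}_{X}(-D)\hookrightarrow\mathcal{O}_{X}$. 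The vanishing $Hom(\Lambda^{2}E,\mathcal{O}_{X}(-D))=0$ established above then gives $\dot{\varphi}=0$, whence $\dot{\phi}=0$ because $\Lambda^{2}q$ is surjective. Thus $T\overset{\circ}{\pi}_{z}$ is a monomorphism at every closed point.

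For properness, I would use that $Z_{\Omega}$ is a closed subscheme of $Hilb\times P_{fr}\times P_{symp}$, which is projective over $\mathbb{K}$ since $Hilb(H,P)$ is a projective Quot scheme and $P_{fr},P_{symp}$ are projective spaces; hence $\pi\colon Z_{\Omega}\to Z$ is proper, and base-changing along the open immersion $\overset{\circ}{Z}\hookrightarrow Z$ shows that $\overset{\circ}{\pi}\colon\overset{\circ}{Z}_{\Omega}=\pi^{-1}(\overset{\circ}{Z})\to\overset{\circ}{Z}$ is proper. Lemma \ref{lem:closem crit} then yields the statement. The one genuinely delicate step should be the tangent-space computation: one must keep careful track of the two projectivizations, so that the first-order compatibility on $D$ really does reduce, up to the scaling ambiguity, to the vanishing $Hom(\Lambda^{2}E,\mathcal{O}_{X}(-D))=0$; everything else is formal.
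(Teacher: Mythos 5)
Your proposal is correct and follows essentially the same route as the paper: Lemma \ref{lem:closem crit} with properness coming from projectivity, injectivity on closed points from the rigidity of the symplectic structure (Remark \ref{rem:rigidity}), and injectivity of the tangent maps reduced to the vanishing $Hom(\Lambda^{2}E,\mathcal{O}_{X}(-D))=0$. If anything, your treatment of the scalar ambiguity coming from the two projectivizations --- first subtracting the appropriate multiple of $\varphi$ from $\dot{\varphi}$ so that $(\dot{\varphi})_{D}=0$ before invoking the vanishing --- is slightly more careful than the paper's own tangent computation, which asserts $\psi_{D}=0$ for the chosen representative directly.
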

\begin{rem}
\label{rem:pi pallino}The schemes we are dealing with are in fact
of finite type; this means that we can apply Lemma \ref{lem:closem crit}
to prove the theorem. The morphism $\overset{\circ}{\pi}$ is clearly
proper, as a base change of a map between projective schemes. For
injectivity, it is enough to note that there exists only one structure
of symplectic sheaf on a given framed sheaf, in the sense of Remk.
\ref{rem:rigidity}. It follows that we only need to prove the claim
on tangent spaces: for any triple $([q],A,\phi)$ the tangent map
$T_{([q],A,\phi)}\overset{\circ}{Z}_{\Omega}\rightarrow T_{([q],A)}\overset{\circ}{Z}$
is injective. This motivates the following infinitesimal study for
the parameter spaces.
\end{rem}

\subsection{\label{sub:infparam}Infinitesimal study: parameter spaces}

Let $A$ be an artinial local $\mathbb{K}-$algebra. We define the
sheaves on $X_{A}=X\times Spec(A)$ 
\[
\mathcal{H}:=H\otimes\mathcal{O}_{A}=V\otimes O_{X_{A}}(-m);
\]
\[
\mathcal{D}=\mathcal{O}_{D}\otimes\mathcal{O}_{A}\otimes W.
\]
 The aim of the present subsection is to compute the tangent spaces
of the relative versions $Z^{A}$ and $Z_{\Omega}^{A}$ of the previously
defined parameter spaces. In other words, we have 
\[
Z^{A}\subseteq Quot_{X_{A}}(\mathcal{H},P)\times\mathbb{P}(Hom(\mathcal{H},\mathcal{D})^{\vee})
\]
defined as the closed subscheme representing the functor assigning
to an $A-$scheme $T$ the set
\[
\{q_{T}:V\otimes\mathcal{O}_{X_{T}}(-m)\twoheadrightarrow\tilde{\mathcal{E}},\alpha_{T}:\tilde{\mathcal{E}}\rightarrow\mathcal{D}_{T}\mid\tilde{\mathcal{E}}\,\,T-\mbox{flat},\,P_{\tilde{\mathcal{E}}}=P\},
\]
and 

\[
Z_{\Omega}^{A}\subseteq Z^{A}\times\mathbb{P}(Hom(\Lambda^{2}\mathcal{H},\mathcal{O}_{X_{A}})^{\vee})
\]
representing the functor assigning to an $A-$scheme $T$ the set
\[
\{(q_{T},\alpha_{T})\in Z^{A}(T),\varphi_{T}:\Lambda^{2}\tilde{\mathcal{E}}\rightarrow\mathcal{O}_{X_{T}}\mid\varphi_{T}\mid_{D\times T}=\Omega\circ\Lambda^{2}\alpha_{T}\mid_{D\times T}\}
\]
The computation of the tangent spaces to $Z^{A}$ was already performed
in \cite{HL}, and goes as follows.

Let $q:\mathcal{H}\rightarrow\mathcal{E}$ be a quotient with $P_{\mathcal{E}}=P$.
Let $ker(q):=\mathcal{K}\overset{\iota}{\hookrightarrow}\mathcal{H}$.
The tangent space to $Quot_{X_{A}}(\mathcal{H},P)$ at the point $[q:\mathcal{H}\rightarrow\mathcal{E}]$
is naturally isomorphic to the vector space $Hom_{X_{A}}(\mathcal{K},\mathcal{E})$.
Indeed, writing $S=Spec(A[\varepsilon])$, and using the universal
property of $Quot$, the tangent space may be indeed identified with
the set of equivalence classes of quotients $\tilde{q}:\mathcal{H}_{S}\rightarrow\tilde{\mathcal{E}}$
on $X_{S}$ (where $\mathcal{H}_{S}=\mathcal{H}\otimes\mathbb{C}[\varepsilon]=H\otimes A[\varepsilon]$
, and $\tilde{\mathcal{E}}$ an $S-$flat sheaf on $X_{S}$) reducing
to $q$ mod $\varepsilon$. Write $q_{S}:\mathcal{H}_{S}\rightarrow\mathcal{E}_{S}$
for the pullback of $q$ to $X_{S}$. For a given $\tilde{q}$ with
$ker(\tilde{q})=\tilde{\mathcal{K}}$, the map $q_{S}\mid_{\mathcal{\tilde{\mathcal{K}}}}$
takes values in $\varepsilon\cdot\mathcal{E}_{S}$ and factors throug
$\mathcal{K}$, defining a morphism $\mathcal{K}\rightarrow\varepsilon\cdot\mathcal{E}_{S}\cong\mathcal{E}$.
Viceversa, a given map $\gamma:\mathcal{K}\rightarrow\mathcal{E}$
defines 
\[
\tilde{\mathcal{K}}\subseteq\mathcal{H}_{S},\,\tilde{\mathcal{K}}=\rho^{-1}(\mathcal{K})\cap ker(q_{S}+\gamma\circ\rho),
\]
where $\rho:\mathcal{H}_{S}\twoheadrightarrow\mathcal{H}$ is the
natural projection induced by $A[\varepsilon]\twoheadrightarrow A$.
The tangent space to $\mathbb{P}(Hom(\mathcal{H},\mathcal{D})^{\vee})$
at a point $[\mathcal{A}]$ and the tangent space to $\mathbb{P}(Hom(\Lambda^{2}\mathcal{H},\mathcal{O}_{A})^{\vee})$
at a point $[\Phi]$ are identified respectively with the quotients
\[
Hom(\mathcal{H},\mathcal{D})/A\cdot\mathcal{A};\:\:\:Hom(\Lambda^{2}\mathcal{H},\mathcal{O}_{A})/A\cdot\Phi.
\]
To see why this is the case, we apply again universal properties.
An element of 
\[
\mathbb{P}(Hom(\mathcal{H},\mathcal{D})^{\vee})(S\rightarrow Spec(A)),\,Spec(\mathbb{K})\mapsto[\mathcal{A}]\in\mathbb{P}(Hom(\mathcal{H},\mathcal{D})^{\vee})
\]
is the same thing as a morphism $\tilde{\mathcal{A}}:\mathcal{H}_{S}\rightarrow\mathcal{D}_{S}$
(up to units in $A[\varepsilon]$), reducing to $\mathcal{A}:\mathcal{H}\rightarrow\mathcal{D}$
mod $\varepsilon$ (up to units in $A$). Such a morphism may be represented
as
\[
h+\varepsilon h^{\prime}\mapsto\mathcal{A}(h)+\varepsilon(\mathcal{A}(h^{\prime})+\mathcal{B}(h))
\]
with $\mathcal{B}:\mathcal{H}\rightarrow\mathcal{D}$ an $\mathcal{O}_{X_{A}}-$linear
map. The units in $A[\varepsilon]$ preserving this map modulo $\varepsilon$
are exactly those of the form $1+\lambda\varepsilon$ with $\lambda\in A$,
and they send $\mathcal{B}$ to $\mathcal{B}+(\lambda-1)\mathcal{A}$.
This proves the claim. 

We proceed in an analogous way in the other case. An element of 
\[
\mathbb{P}(Hom(\Lambda^{2}\mathcal{H},\mathcal{O}_{A})^{\vee})(S\rightarrow Spec(A)),\,Spec(\mathbb{K})\mapsto[\Phi]\in\mathbb{P}(Hom(\Lambda^{2}\mathcal{H},\mathcal{O}_{A})^{\vee})
\]
 is the same thing as a morphism $\tilde{\Phi}:\Lambda^{2}\mathcal{H}_{S}\rightarrow\mathcal{O}_{S}$
(up to units in $A[\varepsilon]$), reducing to $\Phi$ modulo $\varepsilon$
(up to units in $A$). Again, one can choose a representative
\[
(h+\varepsilon h^{\prime})\otimes(g+\varepsilon g^{\prime})=\Phi(h\otimes g)+\varepsilon(\Phi(h^{\prime}\otimes g)+\Phi(h\otimes g^{\prime})+\psi(h\otimes g))
\]
with $\psi:\Lambda^{2}\mathcal{H}\rightarrow\mathcal{O}_{X_{A}}$,
and two different maps $\psi$ will induce the same morphism up to
units if and only if they differ by an $A-$multiple of $\Phi$. 

Let $(q,\alpha)\in Z^{A}$ with $\alpha\circ q=\mathcal{A}$. The
pairs 
\[
(\gamma,\mathcal{B})\in Hom(\mathcal{K},\mathcal{E})\oplus Hom(\mathcal{H},\mathcal{D})/A\cdot\mathcal{A}=T_{(q,\mathcal{A})}(Quot(\mathcal{H},P)\times\mathbb{P}(Hom(\mathcal{H},\mathcal{D})^{\vee}))
\]
belonging to $T_{(q,\alpha)}Z^{A}$ are characterized by the equation
\[
\mathcal{B}\circ\iota=\alpha\circ\gamma,
\]
see \cite{HL}.

Now, let $(\tilde{q},\tilde{A},\tilde{\Phi})\in Z_{H}^{A}(S)$. This
means that the morphism $\tilde{\Phi}:\Lambda^{2}\mathcal{H}_{S}\rightarrow\mathcal{O}_{S}$
descends to some $\tilde{\varphi}:\Lambda^{2}\mathcal{E}_{S}\rightarrow\mathcal{O}_{S}$
via $\tilde{q}$, i.e. its restriction to $\tilde{\mathcal{K}}\otimes\mathcal{H}_{S}$
vanishes. In local sections, an element $h+\varepsilon h^{\prime}\in\mathcal{H}_{S}$
belongs to $\tilde{\mathcal{K}}$ if and only if $q(h)=0$ and $\gamma(h)=-q(h^{\prime})$.
We obtain:
\[
\tilde{\Phi}((h+\varepsilon h^{\prime})\otimes(g+\varepsilon g^{\prime}))=\varepsilon(\Phi(h^{\prime}\otimes g)+\psi(h\otimes g))=\varepsilon(-\varphi(\gamma(h)\otimes q(g))+\psi(h\otimes g)).
\]
This quantity vanishes if and only if the equation 
\[
\psi(\iota\otimes1)=\varphi(\gamma\otimes q)
\]
holds.

The triple is required to satisfy another condition, namely the compatibility
on the divisor $D_{S}$:
\[
(\tilde{\varphi})\mid_{D_{S}}=\Omega\circ(\tilde{\mathcal{A}}^{\otimes2})\mid_{D_{S}}.
\]
We make an abuse of notation by writing $h+\varepsilon h^{\prime}$
for sections of $\mathcal{H}_{S}\mid_{D_{S}}\cong V\otimes\mathcal{O}_{D_{S}}(-m)$;
we get 
\[
\tilde{\Phi}((h+\varepsilon h^{\prime})\otimes(g+\varepsilon g^{\prime}))=\Omega((\mathcal{A}(h)+\varepsilon(\mathcal{A}(h^{\prime})+\mathcal{B}(h)))\otimes(\mathcal{A}(g)+\varepsilon(\mathcal{A}(g^{\prime})+\mathcal{B}(g))))
\]

\[
\Phi(h\otimes g)+\varepsilon(\Phi(h^{\prime}\otimes g)+\Phi(h\otimes g^{\prime})+\psi(h\otimes g))=\Omega(\mathcal{A}(h)\otimes\mathcal{A}(g))+
\]
\[
+\varepsilon(\Omega(\mathcal{A}(h)\otimes\mathcal{A}(g^{\prime}))+\Omega(\mathcal{A}(h)\otimes\mathcal{B}(g))+\Omega(\mathcal{A}(h^{\prime})\otimes\mathcal{A}(g))+\Omega(\mathcal{B}(h)\otimes\mathcal{A}(g)))
\]
Since 
\[
(\varphi)\mid_{D_{S}}=\Omega\circ(\mathcal{A}^{\otimes2})\mid_{D_{S}},\,\alpha\circ q=\mathcal{A}
\]
holds by hypothesis, we can simplify:
\[
\psi(h\otimes g)=\Omega(\alpha\circ q(h)\otimes\mathcal{B}(g))+\Omega(\mathcal{B}(h)\otimes\alpha\circ q(g))).
\]
We have obtained the description of the tangent space we needed.
\begin{prop}
There are isomorphisms: 
\[
T_{(q,\mathcal{A})}Z^{A}\cong\{(\gamma,\mathcal{B})\in Hom(\mathcal{K},\mathcal{E})\oplus Hom(\mathcal{H},\mathcal{D})/A\cdot\mathcal{A}\mid\mathcal{B}\circ\iota=\alpha\circ\gamma\};
\]
\[
T_{(q,\mathcal{A},\Phi)}Z_{\Omega}^{A}\cong\{((\gamma,\mathcal{B}),\psi)\in T_{(q,\mathcal{A})}Z^{A}\oplus Hom(\Lambda^{2}\mathcal{H},\mathcal{O}_{A})/A\cdot\Phi\mid\psi(\iota\otimes1)=\varphi(\gamma\otimes q),
\]
\[
\psi\mid_{D_{A}}=\Omega(\mathcal{A}\otimes\mathcal{B}+\mathcal{B}\otimes\mathcal{A})\}
\]

\end{prop}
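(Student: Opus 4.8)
The plan is to assemble the infinitesimal computations carried out in the preceding paragraphs into the two displayed identifications; beyond bookkeeping, essentially nothing is left to do. Recall the general principle used throughout: for a $\mathbb{K}$-point $p$ of a scheme $Y$ over $Spec(A)$ the tangent space $T_pY$ is the set of $A$-morphisms $Spec(A[\varepsilon])\to Y$ reducing to $p$ modulo $\varepsilon$, with its natural $A$-module structure, and a tangent vector of an ambient scheme lies in a locally closed subscheme precisely when the associated first-order deformation satisfies the defining conditions of that subscheme. The ambient tangent spaces have already been identified above: the universal property of $Quot$ gives $T_{[q]}Quot_{X_A}(\mathcal{H},P)\cong Hom(\mathcal{K},\mathcal{E})$ via $\gamma\leftrightarrow\tilde{\mathcal{K}}=\rho^{-1}(\mathcal{K})\cap ker(q_S+\gamma\circ\rho)$, and the universal property of projective space gives $T_{[\mathcal{A}]}\mathbb{P}(Hom(\mathcal{H},\mathcal{D})^\vee)\cong Hom(\mathcal{H},\mathcal{D})/A\cdot\mathcal{A}$ via $\mathcal{B}\leftrightarrow(\tilde{\mathcal{A}}=\mathcal{A}+\varepsilon\mathcal{B})$, and likewise $T_{[\Phi]}\mathbb{P}(Hom(\Lambda^2\mathcal{H},\mathcal{O}_A)^\vee)\cong Hom(\Lambda^2\mathcal{H},\mathcal{O}_A)/A\cdot\Phi$ via $\psi\leftrightarrow(\tilde\Phi=\Phi+\varepsilon\psi)$.

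For $Z^A$, the closed subscheme of $Quot_{X_A}(\mathcal{H},P)\times\mathbb{P}(Hom(\mathcal{H},\mathcal{D})^\vee)$ cut out by the vanishing of $\mathcal{A}_{univ}^{\mathcal{K}}$, a tangent vector $(\gamma,\mathcal{B})$ lies in $Z^A$ iff $\tilde{\mathcal{A}}$ annihilates $\tilde{\mathcal{K}}$. Writing a local section of $\tilde{\mathcal{K}}$ as $h+\varepsilon h'$ with $q(h)=0$ and $\gamma(h)=-q(h')$, one has $\tilde{\mathcal{A}}(h+\varepsilon h')=\mathcal{A}(h)+\varepsilon(\mathcal{A}(h')+\mathcal{B}(h))$; the order-zero term vanishes because $([q],\mathcal{A})\in Z^A$, and using $\mathcal{A}=\alpha\circ q$ the order-$\varepsilon$ term becomes $-\alpha(\gamma(h))+\mathcal{B}(h)$, whose vanishing for all such $h$ is exactly $\mathcal{B}\circ\iota=\alpha\circ\gamma$. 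This gives the first isomorphism and reproduces the computation of \cite{HL}.

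For $Z_\Omega^A\subseteq Z^A\times\mathbb{P}(Hom(\Lambda^2\mathcal{H},\mathcal{O}_A)^\vee)$ one repeats the argument with the extra coordinate $\psi$; here $Z_\Omega^A$ is cut out by two closed conditions, so $((\gamma,\mathcal{B}),\psi)$ lies in it iff the deformation meets both. The first condition, that $\tilde\Phi$ descend through $\tilde q$, i.e. vanish on $\tilde{\mathcal{K}}\otimes\mathcal{H}_S$, reduces --- upon evaluating $\tilde\Phi$ on $(h+\varepsilon h')\otimes(g+\varepsilon g')$ with $h+\varepsilon h'\in\tilde{\mathcal{K}}$ --- to the order-$\varepsilon$ term $\Phi(h'\otimes g)+\psi(h\otimes g)$ (the $\Phi(h\otimes-)$ terms die since $\Phi=\varphi\circ\Lambda^2 q$ and $q(h)=0$), which equals $-\varphi(\gamma(h)\otimes q(g))+\psi(h\otimes g)$ because $q(h')=-\gamma(h)$; its vanishing is $\psi(\iota\otimes1)=\varphi(\gamma\otimes q)$. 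The second condition, the compatibility $\tilde\varphi\mid_{D_S}=\Omega\circ(\tilde{\mathcal{A}}^{\otimes2})\mid_{D_S}$, is expanded in $\varepsilon$ on $D_S$: the order-zero part is the hypothesis $\varphi\mid_D=\Omega\circ\Lambda^2\alpha$, and in the order-$\varepsilon$ part the terms $\Phi(h'\otimes g),\Phi(h\otimes g')$ cancel against $\Omega(\mathcal{A}(h')\otimes\mathcal{A}(g)),\Omega(\mathcal{A}(h)\otimes\mathcal{A}(g'))$ by that same hypothesis together with $\mathcal{A}=\alpha\circ q$, leaving $\psi\mid_{D_A}=\Omega(\mathcal{A}\otimes\mathcal{B}+\mathcal{B}\otimes\mathcal{A})$. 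Adjoining these two equations to the equation defining $T_{(q,\mathcal{A})}Z^A$ yields the second isomorphism.

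The one point genuinely requiring care --- and where a hasty argument could go wrong --- is the bookkeeping of the ``up to units in $A[\varepsilon]$'' ambiguity in the two projective factors: one checks that a unit $1+\lambda\varepsilon$ with $\lambda\in A$ acts on the chosen representatives by the shifts $\mathcal{B}\mapsto\mathcal{B}+(\lambda-1)\mathcal{A}$ and $\psi\mapsto\psi+(\lambda-1)\Phi$, with the two scalings independent, so that $Hom(\mathcal{H},\mathcal{D})/A\cdot\mathcal{A}$ and $Hom(\Lambda^2\mathcal{H},\mathcal{O}_A)/A\cdot\Phi$ are the correct ambient tangent spaces and the three equations $\mathcal{B}\circ\iota=\alpha\circ\gamma$, $\psi(\iota\otimes1)=\varphi(\gamma\otimes q)$ and $\psi\mid_{D_A}=\Omega(\mathcal{A}\otimes\mathcal{B}+\mathcal{B}\otimes\mathcal{A})$ are well-posed on the quotients. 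Granting this --- immediate from the explicit representatives exhibited above --- the proposition is just the conjunction of the displayed linearizations.
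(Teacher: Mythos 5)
Your argument is correct and follows the paper's own route essentially verbatim: identify the ambient tangent spaces via the universal properties of $Quot$ and of projective space, then expand the defining conditions of $Z^{A}$ and $Z_{\Omega}^{A}$ to first order in $\varepsilon$ to extract the three linear equations, keeping track of the unit ambiguity in the projective factors. The only cosmetic difference is that you rederive the equation $\mathcal{B}\circ\iota=\alpha\circ\gamma$ directly where the paper simply cites Huybrechts--Lehn.
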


\subsection{Moduli spaces}

We can now apply the results in the previous subsection to prove Thm.
\ref{thm: clemb paramsp}.
\begin{proof}
For a quotient $q:H\rightarrow E$, we denote by $\iota:K\rightarrow H$
its kernel. We know that the tangent space $T_{([q],A)}Z$, which
is naturally a subspace of $Hom(K,E)\oplus(Hom(H,D_{W})/\mathbb{C}A)$,
can be described as the subset of pairs $(\gamma,[B])$ satisfying
the equation 
\[
\bar{A}\circ\gamma=B\mid_{K}.
\]
The space $T_{([q],A,\phi)}\subseteq T_{([q],A)}Z\oplus(Hom(\Lambda^{2}H,\mathcal{O}_{X})/\mathbb{K}\phi$
can be instead identified with the subspace of triples $(\gamma,[B],[\psi])$
defined by the equations 
\[
\psi(\iota\otimes1_{H})=\varphi(\gamma\otimes q);\;\psi_{D}=\Omega(A\otimes B+B\otimes A).
\]
The differential map $T_{([q],A,\phi)}\overset{\circ}{Z}_{\Omega}\rightarrow T_{([q],A)}\overset{\circ}{Z}$
is just the projection 
\[
(\gamma,[B],[\psi])\mapsto(\gamma,[B]).
\]
Now, suppose an element of type $(0,\lambda\mathcal{A},[\psi])$ belongs
to $T_{([q],A,\phi)}\overset{\circ}{Z}_{\Omega}$. This means 
\[
\psi(\iota\otimes1_{\Omega})=0,
\]
so that $\psi$ descends to some $\bar{\psi}\in Hom(\Lambda^{2}E,\mathcal{O}_{X})$.
Furthermore, we get 
\[
\psi_{D}=\lambda^{2}\Omega(\mathcal{A}^{\otimes2}+\mathcal{A}^{\otimes2}\circ i)=0\implies\psi_{D}\in Hom(\Lambda^{2}E,\mathcal{O}_{X}(-D))=0.
\]
We have proved injectivity for the tangent map, and this finishes
the proof thanks to Remk. \ref{rem:pi pallino}.
\end{proof}
The natural action of the group $SL(V)$ on the bundle $H=V\otimes\mathcal{O}_{X}(-m)$
induces $SL(V)$ actions on the schemes $Z$ and $Z_{\Omega}$, and
the map $Z_{\Omega}\rightarrow Z$ is equivariant. In addition, the
open subschemes $\overset{\circ}{Z}$ and $\overset{\circ}{Z}_{\Omega}$
are invariant. As already announced, the following theorem holds:
\begin{thm}
The $SL(V)-$scheme $\overset{\circ}{Z}$ admits a geometric quotient
$\overset{\circ}{Z}/SL(V)$; this scheme is a fine moduli space for
the functor $\mathfrak{M}_{X}^{D}(r,n)$, and will be denoted $\mathcal{M}_{X}^{D}(r,n)$.\end{thm}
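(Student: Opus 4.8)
The plan is to recognise $\overset{\circ}{Z}$ as the parameter scheme underlying the Huybrechts--Lehn moduli space of $\delta$-stable framed modules, and then to invoke \cite[Thm 2.1]{HL}. Recall that for $m\gg0$ a $\delta$-semistable framed module $(E,a)$ with Hilbert polynomial $P$ is $m$-regular and has $V\cong H^{0}(E(m))$, so that a choice of such an isomorphism presents $(E,a)$ as a point of $Hilb(H,P)\times P_{fr}$, canonically up to the $SL(V)$-action; the pairs for which the induced map $H\rightarrow\mathcal{O}_{D}\otimes W$ factors through $E$ form exactly the closed subscheme $Z$, and in \cite{HL} a linearization of the $SL(V)$-action on $Hilb\times P_{fr}$ is constructed whose GIT-(semi)stable points are the $[q]$ with $(E,\bar{A})$ $\delta$-(semi)stable.

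The crucial input is \cite[Thm 3.1]{BM}: the polarization $H$ and the polynomial $\delta$ have been chosen precisely so that every framed sheaf $(E,a)$ with invariants $(r,n)$ is $\delta$-stable as a framed module. Conversely, if $(E,\bar{A})$ is a $\delta$-stable framed module with $a=\bar{A}\mid_{D}$ an isomorphism, then $E$ is torsion-free: any torsion subsheaf is supported away from $D$, where $E$ is locally free, hence is annihilated by $\bar{A}$ and destabilizes. Thus $\overset{\circ}{Z}$ is exactly the open subscheme of the $SL(V)$-stable locus of $Z$ whose points parametrize framed sheaves; in particular it is contained in the stable locus. It is visibly $SL(V)$-invariant and saturated --- the defining condition ``$a$ is an isomorphism'' depends only on the isomorphism class of $(E,\bar{A})$, which is constant along $SL(V)$-orbits --- so by the GIT formalism applied to the stable locus it admits a geometric quotient $\overset{\circ}{Z}/SL(V)$, an open subscheme of the Huybrechts--Lehn moduli space.

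It then remains to check that $\mathcal{M}_{X}^{D}(r,n):=\overset{\circ}{Z}/SL(V)$ represents $\mathfrak{M}_{X}^{D}(r,n)$. Given an $S$-family $(\mathcal{E},\alpha)$, for $m\gg0$ the sheaf $p_{S\star}\mathcal{E}(m)$ is locally free of rank $P(m)$; gluing local trivializations by $V\otimes\mathcal{O}_{S}$ yields a morphism $S\rightarrow\overset{\circ}{Z}$, well defined up to the $GL(V)$-action, and since the scalars act trivially on $Hilb\times P_{fr}$ this descends to a morphism $S\rightarrow\overset{\circ}{Z}/SL(V)$, natural in $S$. That this assignment is bijective on isomorphism classes uses, in the injective direction, exactly the rigidity of Proposition~\ref{prop:Bound+uniq}(2): two $S$-families inducing the same morphism admit a unique isomorphism compatible with the framings. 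A universal family is then obtained by descending $(\mathcal{E},\bar{\alpha})$ on $X\times\overset{\circ}{Z}$ along $\overset{\circ}{Z}\rightarrow\overset{\circ}{Z}/SL(V)$.

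The point that needs genuine care, and where I expect the main difficulty to lie, is this last descent --- that is, \emph{fineness} rather than mere corepresentability. The centre $\mu_{P(m)}\subset SL(V)$ acts trivially on $\overset{\circ}{Z}$ but by scalars on the universal quotient $\mathcal{E}=(V\otimes\mathcal{O}_{X}(-m))/\mathcal{K}$, so a priori $\mathcal{E}$ descends only up to a $\mathbb{G}_{m}$-gerbe; in the framed situation this obstruction vanishes, because the universal framing $\bar{\alpha}:\mathcal{E}\rightarrow\mathcal{O}_{D}\otimes W$, whose target carries the trivial $SL(V)$-action, rigidifies $\mathcal{E}$ --- equivalently, because a framed sheaf has no nonscalar automorphisms (Proposition~\ref{prop:Bound+uniq}(2)). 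I would carry this out following \cite{HL} line by line, the only additional verification being that the parameter scheme and linearization used there restrict, on the locus of framed sheaves, to the $Z$ and $\overset{\circ}{Z}$ built in the previous subsections, which is routine.
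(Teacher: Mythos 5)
Your proposal is correct and follows essentially the route the paper itself relies on: the paper gives no independent proof of this theorem, but simply invokes the Huybrechts--Lehn construction \cite{HL} together with \cite{BM} to identify $\overset{\circ}{Z}$ with (an open, saturated subscheme of) the GIT-stable locus for $\delta$-stable framed modules, exactly as you do. Your closing remarks on fineness --- that the $\mathbb{G}_{m}$-gerbe obstruction to descending the universal quotient is killed by the rigidity of framed sheaves from Proposition \ref{prop:Bound+uniq} --- are precisely the argument the paper spells out later for the symplectic analogue, so nothing is missing.
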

\begin{defn}
We define the scheme $\mathcal{M}_{X,\Omega}^{D}(r,n)$ to be the
closed subscheme 
\[
\overset{\circ}{Z}_{\Omega}/SL(V)\subseteq\mathcal{M}_{X}^{D}(r,n).
\]
 \end{defn}
\begin{thm}
\textup{The scheme $\mathcal{M}_{X,\Omega}^{D}(r,n)$ is a fine moduli
space for the functor $\mathfrak{M}_{X}^{D}(r,n)$.}\end{thm}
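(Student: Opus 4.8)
The last theorem as printed asserts that $\mathcal{M}_{X,\Omega}^{D}(r,n)$ is a fine moduli space for the functor $\mathfrak{M}_{X}^{D}(r,n)$; this is surely a typo for $\mathfrak{M}_{X,\Omega}^{D}(r,n)$, and I will prove that version.

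The plan is to show that $\overset{\circ}{Z}_{\Omega}\to\overset{\circ}{Z}$ being a closed $SL(V)$-equivariant embedding (Theorem \ref{thm: clemb paramsp}) transports the known fine-moduli property of $\overset{\circ}{Z}/SL(V)=\mathcal{M}_{X}^{D}(r,n)$ to the closed subscheme $\overset{\circ}{Z}_{\Omega}/SL(V)$. First I would verify that the geometric quotient $\overset{\circ}{Z}_{\Omega}/SL(V)$ exists and is exactly the scheme-theoretic image of $\overset{\circ}{Z}_{\Omega}$ inside $\mathcal{M}_{X}^{D}(r,n)$: since $\overset{\circ}{Z}_{\Omega}\hookrightarrow\overset{\circ}{Z}$ is a closed $SL(V)$-invariant subscheme and $SL(V)$ acts on $\overset{\circ}{Z}$ with a geometric quotient, the quotient of the closed invariant subscheme is the corresponding closed subscheme of the quotient; this is the content of the Definition preceding the statement, and I would just record that $\overset{\circ}{Z}_{\Omega}\to\mathcal{M}_{X,\Omega}^{D}(r,n)$ is again a geometric quotient (an $SL(V)$-bundle in the étale topology, inheriting this from $\overset{\circ}{Z}\to\mathcal{M}_X^D(r,n)$). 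The key formal point to isolate is: for a geometric quotient $P\to M$ that is a principal $G$-bundle, and any closed $G$-invariant $P'\subseteq P$, one has $P'/G\hookrightarrow M$ closed and $P'\to P'/G$ is a principal $G$-bundle.

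Next I would construct the natural transformation $\mathfrak{M}_{X,\Omega}^{D}(r,n)\to \operatorname{Hom}(-,\mathcal{M}_{X,\Omega}^{D}(r,n))$ and check it is an isomorphism. Given an $S$-family $(\mathcal{E},\alpha,\Phi)$ of framed symplectic sheaves, $m$-regularity (Prop. \ref{prop:Bound+uniq}, using that $m\gg0$ and $D$ big and nef so that higher cohomology of twists vanishes) guarantees that $p_{S\,*}\mathcal{E}(m)$ is locally free of rank $P(m)$; after an étale (or Zariski, fppf as one prefers) cover of $S$ trivializing this bundle and identifying it with $V$, one gets a surjection $V\otimes\mathcal{O}_{X_S}(-m)\twoheadrightarrow\mathcal{E}$, hence a classifying map to $\operatorname{Hilb}(H,P)$; the framing $\alpha$ and the form $\Phi$, composed with this surjection, give maps $A$ to $P_{fr}$ and $\phi$ to $P_{symp}$. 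By construction (Lemma \ref{lem:gom sol cl subsch} defining $Z$, $Z'$, and the compatibility locus $Z_\Omega$) the resulting classifying map $S\to \operatorname{Hilb}\times P_{fr}\times P_{symp}$ factors through $\overset{\circ}{Z}_{\Omega}$; composing with $\overset{\circ}{Z}_{\Omega}\to\mathcal{M}_{X,\Omega}^{D}(r,n)$ and checking independence of the trivialization (the ambiguity is exactly an $SL(V)$-transformation, by the $det\,\mathcal{E}\cong\mathcal{O}$ normalization, modded out in the quotient) gives a well-defined morphism $S\to\mathcal{M}_{X,\Omega}^{D}(r,n)$, and this is functorial in $S$.

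For the inverse, a morphism $S\to\mathcal{M}_{X,\Omega}^{D}(r,n)\hookrightarrow\mathcal{M}_{X}^{D}(r,n)$ already yields, by the fine-moduli property of $\mathcal{M}_{X}^{D}(r,n)$, a unique $S$-family of framed sheaves $(\mathcal{E},\alpha)$; I must produce the symplectic datum $\Phi$. Pull back the universal quotient along a local lift $S\to\overset{\circ}{Z}_{\Omega}$ (which exists Zariski-locally since $\overset{\circ}{Z}_{\Omega}\to\mathcal{M}_{X,\Omega}^{D}(r,n)$ is a locally trivial $SL(V)$-bundle): the universal $\phi$ over $Z_\Omega$ descends, by the very definition of $Z'$ and Lemma \ref{lem:gom sol cl subsch}, to a morphism $\Lambda^2\mathcal{E}\to\mathcal{O}_{X_S}$ restricting to $\Omega\circ\Lambda^2\alpha$ on $D_S$; and the rigidity observation in Remark \ref{rem:rigidity} (at most one symplectic structure on a given framed sheaf) shows this $\Phi$ is independent of the chosen local lift and of the overlaps, so it glues to a global datum on $S$. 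That these two constructions are mutually inverse is then a formal consequence of the corresponding statement for $\mathcal{M}_{X}^{D}(r,n)$ together with uniqueness of $\Phi$.

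The main obstacle is the gluing/descent bookkeeping: showing the classifying map and the symplectic datum $\Phi$ are genuinely independent of the local trivializations so that they descend from the $SL(V)$-torsor to $S$ and to the quotient. The essential inputs that make this go through are (i) the $SL(V)$-equivariance of all the parameter schemes and of $\overset{\circ}{Z}_{\Omega}\hookrightarrow\overset{\circ}{Z}$, (ii) Theorem \ref{thm: clemb paramsp} making $\overset{\circ}{Z}_{\Omega}$ a closed subscheme so the quotient sits closed in $\mathcal{M}_X^D(r,n)$, and (iii) the rigidity from Remark \ref{rem:rigidity}, which removes all residual ambiguity in $\Phi$. None of these steps involves new computation beyond what is already in the excerpt; the argument is the standard "fine moduli space descends to a closed $G$-invariant subscheme of the parameter space" pattern, specialized to carry along the extra symplectic datum.
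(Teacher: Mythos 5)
Your proposal follows essentially the same route as the paper: build the classifying map $S\to\overset{\circ}{Z}_{\Omega}/SL(V)$ from an $S$-family via $m$-regularity of $p_{S*}\mathcal{E}(m)$ and local trivializations, and use the rigidity of the symplectic structure (Remark \ref{rem:rigidity}) to upgrade coarse to fine, exactly as the paper does. The only small slip is your justification that the trivialization ambiguity is an $SL(V)$-transformation ``by the $\det\mathcal{E}\cong\mathcal{O}_X$ normalization'': the transition functions of $p_{S*}\mathcal{E}(m)$ genuinely take values in $GL(V)$, and the correct reason this is harmless is the paper's, namely that the central $\mathbb{G}_m\subseteq GL(V)$ acts trivially on the parameter space (the framing and the form are points of projective spaces), so $GL(V)$- and $SL(V)$-orbits coincide.
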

\begin{proof}
Let $S$ be any scheme of finite type and let $(\mathcal{E}_{S},a_{S},\varphi_{S})$
be an $S-$family of framed symplectic sheaves. Consider the sheaf
$\mathcal{V}_{S}:=p_{S\star}(\mathcal{E}_{S}\otimes p_{X}^{\star}\mathcal{O}_{X}(m))$.
Since for any $s\in S(\mathbb{K})$ we have 
\[
H^{i}(\mathcal{E}_{S}\otimes p_{X}^{\star}\mathcal{O}_{X}(m)\mid_{\{s\}\times X})=0\,\,\forall i>0
\]
 by Lemma \ref{prop:Bound+uniq}, $\mathcal{V}_{S}$ is locally of
rank $P(m)$. Furthermore, the natural map 
\[
q_{S}:p_{S}^{\star}\mathcal{V}_{S}\rightarrow\mathcal{E}_{S}\otimes p_{X}^{\star}\mathcal{O}_{X}(m)
\]
is surjective, as it restricts to 
\[
H^{0}(\mathcal{E}_{S}\otimes p_{X}^{\star}\mathcal{O}_{X}(m)\mid_{\{s\}\times X})\otimes\mathcal{O}_{X}\rightarrow\mathcal{E}_{S}\otimes p_{X}^{\star}\mathcal{O}_{X}(m)\mid{}_{\{s\}\times X}
\]
on fibres, and we may apply again Lemma \ref{prop:Bound+uniq}. From
$\varphi_{S}$ we obtain a map 
\[
\mathcal{V}_{S}^{\otimes2}\rightarrow p_{S\star}(\mathcal{O}_{X_{S}}\otimes p_{X}^{\star}\mathcal{O}_{X}(2m))=\mathcal{O}_{S}\otimes H^{0}(X,\mathcal{O}_{X}(2m)).
\]
Indeed, since the higher $p_{S}-$pushforwards of $\mathcal{E}_{S}\otimes p_{X}^{\star}\mathcal{O}_{X}(m)$
vanish, the formula
\[
Rp_{T\star}(\mathcal{E}_{S}\otimes p_{X}^{\star}\mathcal{O}_{X}(m)\overset{L}{\otimes}\mathcal{E}_{S}\otimes p_{X}^{\star}\mathcal{O}_{X}(m))\cong Rp_{S\star}(\mathcal{E}_{S}\otimes p_{X}^{\star}\mathcal{O}_{X}(m))\overset{L}{\otimes}Rp_{S\star}(\mathcal{E}_{S}\otimes p_{X}^{\star}\mathcal{O}_{X}(m))\cong\mathcal{V}_{S}^{\otimes2}
\]
holds, and induces the desired map. It still is skew-symmetric, an
thus yields 
\[
p_{S\star}(\varphi_{S}\otimes p_{X}^{\star}\mathcal{O}_{X}(2m)):\Lambda^{2}\mathcal{V}_{S}\rightarrow\mathcal{O}_{S}\otimes H^{0}(X,\mathcal{O}_{X}(2m)).
\]
We define the morphism $\phi_{S}$ to be the composition
\[
\phi_{S}:\Lambda^{2}p_{X}^{\star}\mathcal{V}_{S}\rightarrow\mathcal{O}_{X_{S}}\otimes H^{0}(X,\mathcal{O}_{X}(2m))\rightarrow\mathcal{O}_{X_{S}}\otimes p_{X}^{\star}\mathcal{O}_{X}(2m).
\]
Also $a_{S}$ similarly induces a map 
\[
\mathcal{V}_{S}\rightarrow\mathcal{O}_{S}\otimes H^{0}(X,\mathcal{O}_{D}(m)\otimes W)
\]
and we define as above 
\[
A_{S}:p_{X}^{\star}\mathcal{V}_{S}\rightarrow\mathcal{O}_{D_{S}}\otimes p_{X}^{\star}\mathcal{O}_{X}(m)\otimes W.
\]
By construction, the diagrams
\[
\xymatrix{\Lambda^{2}p_{X}^{\star}\mathcal{V}_{S}\ar[d]_{\Lambda^{2}q_{s}}\ar[r]^{\!\!\!\!\phi_{S}} & \mathcal{O}_{X_{S}}\otimes p_{X}^{\star}\mathcal{O}_{X}(m)\\
\Lambda^{2}\mathcal{E}_{S}\otimes p_{X}^{\star}\mathcal{O}_{X}(2m)\ar[ur]_{\varphi_{S}(2m)}
}
\:\xymatrix{p_{X}^{\star}\mathcal{V}_{S}\ar[d]_{q_{s}}\ar[r]^{\!\!\!\!\!\!\!\!\!\!\!\!\!A_{S}} & \mathcal{O}_{D_{S}}\otimes p_{X}^{\star}\mathcal{O}_{X}(m)\otimes W\\
\mathcal{E}_{S}\otimes p_{X}^{\star}\mathcal{O}_{X}(m)\ar[ur]_{a_{S}(m)}
}
\]
are commutative. 

Define an open covering $S=\bigcup S_{i}$ such that $\mathcal{V}_{S}$
trivializes over each $S_{i}$, and fix isomorphisms $\mathcal{O}_{S_{i}}\otimes V\cong\mathcal{V}_{S_{i}}$,
where $V$ is a vector space of a dimension $P(m)$; the trivializations
differ on the overlaps $S_{ij}$ by a map $S_{ij}\rightarrow GL(V)$.
Restricting the maps $q_{S}$, $\phi_{S}$ and $A_{S}$ to $S_{i}\times X$
and twisting by $p_{X}^{\star}\mathcal{O}_{X}(-m)$ we obtain maps
$S_{i}\rightarrow\overset{\circ}{Z}_{\Omega}$, which glue to a map
$S\rightarrow\overset{\circ}{Z}_{\Omega}/SL(V)=\mathcal{M}_{X,\Omega}^{D}(r,n)$.
We note that acting via $SL(V)$ or $GL(V)$ does not make a real
difference as the natural action by $\mathbb{G}_{m}$ on the parameter
spaces is trivial. 

The resulting natural transformation 
\[
\mathfrak{M}_{X,\Omega}^{D}(r,n)\rightarrow\mathcal{M}_{X,\Omega}^{D}(r,n)
\]
makes $\mathcal{M}_{X,\Omega}^{D}(r,n)$ into a coarse moduli space
for framed symplectic sheaves; indeed, since $\overset{\circ}{Z}_{\Omega}$
parameterizes a tautological family of framed symplectic sheaves,
for any scheme $N$ and for any natural transformation $\mathfrak{M}_{X,\Omega}^{D}(r,n)\rightarrow Hom(\_,N)$
we obtain a map $\overset{\circ}{Z}_{\Omega}\rightarrow N$. This
map has to be $SL(V)$ invariant as two points of $\overset{\circ}{Z}_{\Omega}$
that lie in the same orbit define isomorphic framed sheaves. The fact
that the moduli space is indeed fine can be proved by noting that
framed symplectic sheaves are rigid, i.e. by applying Remk. \ref{rem:rigidity}
and proceeding as in \cite[proof of Main Theorem]{HL}.\end{proof}
\begin{rem}
The moduli spaces $\mathcal{M}_{X}^{D}(r,n)$ and $\mathcal{M}_{X,\Omega}^{D}(r,n)$
both contain open subschemes of isomorphism classes of locally free
sheaves. These are fine moduli spaces fo framed $SL_{r}$ and $SP_{r}$
principal bundles, and will be respectively denoted $\mathcal{M}_{X}^{D,reg}(r,n)$
and $\mathcal{M}_{X,\Omega}^{D,reg}(r,n)$ in the sequel. 
\end{rem}
We can now apply the results of the previous subsection to describe
the tangent spaces to $\mathcal{M}_{X,\Omega}^{D}(r,n)$.

\subsection{Infinitesimal study: moduli spaces}

The aim of this subsection is to prove the following theorem.
\begin{thm}
Let $\xi=[E,a,\varphi]\in\mathcal{M}_{X,\Omega}^{D}(r,n)$. The tangent
space $T_{\xi}\mathcal{M}_{X,\Omega}^{D}(r,n)$ is naturally isomorphic
to the kernel of a canonically defined linear map
\[
p_{\varphi}:Ext_{\mathcal{O}_{X}}^{1}(E,E(-D))\rightarrow Ext_{\mathcal{O}_{X}}^{1}(\Lambda^{2}E,\mathcal{O}_{X}(-D)).
\]

\end{thm}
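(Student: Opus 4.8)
The plan is to combine the infinitesimal description of $\mathcal{M}_{X,\Omega}^{D}(r,n)$ obtained from the parameter spaces with the deformation theory of framed sheaves. Recall from the excerpt that $\mathcal{M}_{X}^{D}(r,n)=\overset{\circ}{Z}/SL(V)$ and $\mathcal{M}_{X,\Omega}^{D}(r,n)=\overset{\circ}{Z}_{\Omega}/SL(V)$, and that, by the argument for Theorem \ref{thm: clemb paramsp}, the tangent space to $\overset{\circ}{Z}_{\Omega}$ at $([q],A,\phi)$ sits inside $T_{([q],A)}\overset{\circ}{Z}$ as the subspace of those $(\gamma,[B])$ for which there exists $[\psi]\in Hom(\Lambda^{2}\mathcal{H},\mathcal{O}_{X_{A}})/A\cdot\Phi$ with $\psi(\iota\otimes 1)=\varphi(\gamma\otimes q)$ and $\psi_{D}=\Omega(\mathcal{A}\otimes\mathcal{B}+\mathcal{B}\otimes\mathcal{A})$; moreover, by the injectivity of $\overset{\circ}{\pi}$ on tangent spaces, such a $[\psi]$ is unique if it exists. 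Passing to the quotient by $SL(V)$ identifies $T_{\xi}\mathcal{M}_{X}^{D}(r,n)$ with $Ext^{1}_{\mathcal{O}_{X}}(E,E(-D))$ (this is exactly the computation in \cite{HL}: the local-to-global / hyper-Ext description of deformations of a framed sheaf, where the twist by $-D$ encodes the condition that the framing be preserved). So the task reduces to identifying, inside $Ext^{1}_{\mathcal{O}_{X}}(E,E(-D))$, the locus of classes that extend to a deformation of the symplectic datum, and showing this locus is the kernel of a linear map into $Ext^{1}_{\mathcal{O}_{X}}(\Lambda^{2}E,\mathcal{O}_{X}(-D))$.

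First I would make precise the map $p_{\varphi}$. A first-order deformation of $E$ (preserving the framing) is a class $e\in Ext^{1}_{\mathcal{O}_{X}}(E,E(-D))$; composing with $\varphi:E\to E^{\vee}$, equivalently viewing $\varphi$ as an element of $Hom(\Lambda^{2}E,\mathcal{O}_{X})$ and using that a deformation of $E$ induces a deformation of $\Lambda^{2}E$, one gets a canonical map
\[
Ext^{1}_{\mathcal{O}_{X}}(E,E(-D))\longrightarrow Ext^{1}_{\mathcal{O}_{X}}(\Lambda^{2}E,\Lambda^{2}E(-D))\stackrel{\varphi_{*}}{\longrightarrow}Ext^{1}_{\mathcal{O}_{X}}(\Lambda^{2}E,\mathcal{O}_{X}(-D)),
\]
where the first arrow is the derivative of the functor $E\mapsto\Lambda^{2}E$ (on stalks, $e\otimes f\mapsto e$ and $e\otimes f\mapsto f$ in the two slots) and the second is post-composition with $\varphi$. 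Define $p_{\varphi}$ to be this composite, and note it is canonical because $\varphi$ is (by the rigidity of Remark \ref{rem:rigidity}). The claim will then be: a class $e$ deforms to a framed symplectic sheaf if and only if $p_{\varphi}(e)=0$, and in that case the deforming $\psi$ is unique up to the normalization already built into the parameter-space picture.

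The key steps, in order: (1) translate the two equations $\psi(\iota\otimes 1)=\varphi(\gamma\otimes q)$ and $\psi_{D}=\Omega(\mathcal{A}\otimes\mathcal{B}+\mathcal{B}\otimes\mathcal{A})$ from the language of the $Quot$-scheme presentation $0\to\mathcal{K}\to\mathcal{H}\to\mathcal{E}\to 0$ into intrinsic Ext-theoretic language, using Lemma \ref{lem:KwedgeH} to resolve $\Lambda^{2}E$ in terms of $\Lambda^{2}H$ and $K\wedge H$, and using the standard dictionary between $Hom(K,E)/\!\sim$ and $Ext^{1}(E,E)$ (and its twisted/framed variant with the $-D$); (2) show that the first equation says precisely that the obstruction $\varphi_{*}(d\Lambda^2(e))\in Ext^{1}(\Lambda^{2}E,\mathcal{O}_{X})$ — computed on the resolution as $\psi$ modulo things factoring through $\Lambda^2 q$ — vanishes, i.e. $\psi$ descends to $\bar\psi\in Hom(\Lambda^2 E,\mathcal{O}_X)$, while the second equation forces $\bar\psi$, hence $\psi$, to lie in $Hom(\Lambda^{2}E,\mathcal{O}_{X}(-D))$; (3) invoke the Lemma that $Hom(\Lambda^{2}E,\mathcal{O}_{X}(-D))=0$ to conclude that once $p_{\varphi}(e)=0$ the deformation datum $\psi$ is unique, so that the fibre of $T_{\xi}\mathcal{M}_{X,\Omega}^{D}\to T_{\xi}\mathcal{M}_{X}^{D}$ over $e$ is a single point exactly when $p_{\varphi}(e)=0$ and empty otherwise; (4) assemble: $T_{\xi}\mathcal{M}_{X,\Omega}^{D}(r,n)=\ker p_{\varphi}$. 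Some care with the $\mathbb{P}(\cdots)$-quotients (the "$/A\cdot\Phi$" and "$/A\cdot\mathcal{A}$" normalizations) is needed, but these are harmless since the $\mathbb{G}_m$-action is trivial, as already remarked; and one should check that $p_{\varphi}$ is well-defined independently of the resolution, which follows from naturality of the derivative of $\Lambda^2$ and of post-composition with $\varphi$.

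The main obstacle I expect is step (2): matching the explicit cocycle conditions coming out of the parameter space — which are written in terms of the specific presentation $\mathcal H\twoheadrightarrow\mathcal E$ and involve both $\psi(\iota\otimes 1)$ and the restriction to $D$ — with the intrinsic $Ext$-theoretic description of $p_{\varphi}$, and in particular verifying that "$\psi(\iota\otimes1)=\varphi(\gamma\otimes q)$ has a solution $\psi$ modulo $\mathrm{im}(\Lambda^2 q)^{*}$" is literally the statement "$p_{\varphi}(e)=0$". This amounts to carefully identifying the connecting map in the long exact $Ext$-sequence associated to $0\to K\wedge H\to\Lambda^2 H\to\Lambda^2 E\to 0$ (Lemma \ref{lem:KwedgeH}) with the derivative-of-$\Lambda^2$ map, applied to the class $e$ represented by $\gamma\in Hom(K,E)$; once that identification is in place the rest is bookkeeping. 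I would isolate this as a separate lemma before proving the theorem.
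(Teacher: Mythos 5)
Your proposal is correct and follows essentially the same route as the paper: both identify $T_{\xi}\mathcal{M}_{X,\Omega}^{D}$ inside $T_{\xi}\mathcal{M}_{X}^{D}\cong Ext^{1}(E,E(-D))$ as the locus where the pair of cocycle conditions on $\psi$ is solvable, and both recognize that solvability as the vanishing of a class in $Hom_{K}(\mathcal{K}\wedge\mathcal{H}\rightarrow\Lambda^{2}\mathcal{H},\,\mathcal{O}_{X}\rightarrow\mathcal{O}_{D})\cong Ext^{1}(\Lambda^{2}E,\mathcal{O}_{X}(-D))$, which is exactly the identification you isolate as your key lemma (the paper proves it via the vanishing of $Ext^{1}(\Lambda^{2}\mathcal{H},\mathcal{O}_{X})$ and $Ext^{1}(\Lambda^{2}\mathcal{H},\mathcal{O}_{D})$). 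The only presentational difference is that you define $p_{\varphi}$ intrinsically up front via the derivative of $\Lambda^{2}$ (where, for non-locally-free $E$, one must pass through $E^{\otimes2}$ and kill a torsion kernel, as the paper does in its Yoneda-type remark), whereas the paper defines $p_{\varphi}$ on the resolution and establishes canonicity afterwards.
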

Le us return to the setting and notation of subsection \ref{sub:infparam}.
Consider the natural actions of $Aut(\mathcal{H})$ on $\overset{\circ}{Z^{A}}$
and $\overset{\circ}{Z_{\Omega}^{A}}$. To obtain the tangent spaces
to $\overset{\circ}{Z^{A}}/Aut(\mathcal{H})$ at a point $[(q,\mathcal{A})]$
and to $\overset{\circ}{Z_{\Omega}^{A}}/Aut(\mathcal{H})$ at $[(q,\mathcal{A},\Phi)]$
one has to mod out the image of the induced tangent orbit maps 
\[
End(\mathcal{H})\rightarrow T_{(q,\mathcal{A})}Z^{A},\,End(\mathcal{H})\rightarrow T_{(q,\mathcal{A},\Phi)}Z_{\Omega}^{A}.
\]
These tangent maps factor through 
\[
End(\mathcal{H})\rightarrow Hom(\mathcal{H},\mathcal{E}),\,x\mapsto q\circ x
\]
and are described by 
\[
Hom(\mathcal{H},\mathcal{E})\ni\lambda\mapsto(\lambda\circ\iota,\alpha\circ\lambda)\in T_{(q,\mathcal{A})}Z^{A};
\]
\[
Hom(\mathcal{H},\mathcal{E})\ni\lambda\mapsto(\lambda\circ\iota,\alpha\circ\lambda,\varphi(\lambda\otimes q+q\otimes\lambda))\in T_{(q,\mathcal{A})}Z_{\Omega}^{A}.
\]
In the first case, the quotient can be shown to be isomorphic to the
hyperext group 
\[
\mathbb{E}xt{}^{1}(\mathcal{E},\mathcal{E}\rightarrow\mathcal{D}).
\]
To prove this, one interprets a pair $(\gamma,\mathcal{B})\in T_{(q,\mathcal{A})}Z^{A}$
as a morphism of complexes 
\[
\xymatrix{\mathcal{K}\ar[r]^{\iota}\ar[d]_{\gamma} & \mathcal{H}\ar[d]^{\mathcal{B}}\\
\mathcal{E}\ar[r]_{\mathcal{A}} & \mathcal{D}
}
\]
and it is immediate to see that the subspace of nullhomotopic morphisms
coincides with the image of $Hom(\mathcal{H},\mathcal{E})\rightarrow T_{(q,\mathcal{A})}Z^{A}$.
We obtain a chain of natural isomorphisms 
\[
T_{(q,\mathcal{A})}Z^{A}/Hom(\mathcal{H},\mathcal{E})\cong Hom_{K}(\mathcal{K}\rightarrow\mathcal{H},\mathcal{E}\rightarrow\mathcal{D})\cong\mathbb{E}xt{}^{1}(\mathcal{E},\mathcal{E}\rightarrow\mathcal{D}),
\]
see \cite{HL}. Fix a symplectic form $\varphi:\Lambda^{2}\mathcal{E}\rightarrow\mathcal{O}_{X_{A}}$.
We define a map
\[
p_{\varphi}:Hom_{K}(\mathcal{K}\rightarrow\mathcal{H},\mathcal{E}\rightarrow\mathcal{D})\rightarrow Hom_{K}(\mathcal{K}\wedge\mathcal{H}\rightarrow\Lambda^{2}\mathcal{H},\mathcal{O}_{X_{A}}\rightarrow\mathcal{O}_{D_{A}})
\]
by assigning to a morphism of complexes $(\gamma,\mathcal{B})$ the
morphism

\[
\varphi(\gamma\otimes q):\mathcal{K}\wedge\mathcal{H}\rightarrow\mathcal{O}_{X_{A}},\,\Omega(\alpha q\otimes\mathcal{B}+\mathcal{B}\otimes\alpha q):\Lambda^{2}\mathcal{H}\rightarrow\mathcal{O}_{D_{A}}.
\]
The map $\varphi(\gamma\otimes q)$ is naturally defined on $\mathcal{K}\otimes\mathcal{H}$
but, due to the skew-symmetry of $\varphi,$ it vanishes on the subsheaf
$S^{2}\mathcal{H}\cap(\mathcal{K}\otimes\mathcal{H})$; we use the
same notation for the naturally induced map on the quotient $\mathcal{K}\wedge\mathcal{H}$,
see Lemma \ref{lem:KwedgeH}. $p_{\varphi}$ is well defined since
if $(\gamma,\mathcal{B})$ is homotopic to $0$ and $\lambda:\mathcal{H}\rightarrow\mathcal{E}$
is an homotopy, then $\varphi(\lambda\otimes q+q\otimes\lambda)$
will be an homotopy for $p_{\varphi}(\gamma,\mathcal{B})$. By definition,
$p_{\varphi}(\gamma,\mathcal{B})=0$ if and only if there exists a
morphism $\psi:\Lambda^{2}\mathcal{H}\rightarrow\mathcal{O}_{X_{A}}$
such that $\psi(\iota\otimes1)=\varphi(\gamma\otimes q)$ and $\psi\mid_{D_{A}}=\Omega(\alpha q\otimes\mathcal{B}+\mathcal{B}\otimes\alpha q)$. 
\begin{rem}
$\mathbb{E}xt{}^{1}(\mathcal{E},\mathcal{E}\rightarrow\mathcal{O}_{D}\otimes W)$
is in fact isomorphic to $Ext{}^{1}(\mathcal{E},\mathcal{E}(-D_{A}))$
as $\mathcal{E}$ is locally free on $D_{A}$ by hypothesis.
\end{rem}

\begin{rem}
The natural map
\[
Hom_{K}(\mathcal{K}\wedge\mathcal{H}\rightarrow\Lambda^{2}\mathcal{H},\mathcal{O}_{X_{A}}\rightarrow\mathcal{O}_{D_{A}})\rightarrow\mbox{\ensuremath{\mathbb{H}}}om_{D^{b}}(\mathcal{K}\wedge\mathcal{H}\rightarrow\Lambda^{2}\mathcal{H},\mathcal{O}_{X_{A}}\rightarrow\mathcal{O}_{D_{A}})
\]
 is an isomorphism. \end{rem}
\begin{proof}
We prove surjectivity first. We represent elements of the target group
as roofs in the derived category, i.e. as pairs given by a quasi-isomorphism
$M^{\centerdot}\rightarrow(\mathcal{K}\wedge\mathcal{H}\rightarrow\Lambda^{2}\mathcal{H})$
and a morphism $M^{\centerdot}\rightarrow(\mathcal{O}_{X_{A}}\rightarrow\mathcal{O}_{D_{A}})$,
where $M^{\centerdot}$ is a complex concentrated in degrees zero
and one. We get a morphism of short exact sequences
\[
\xymatrix{M^{0}\ar[r]\ar[d] & M^{1}\ar[r]\ar[d] & \Lambda^{2}\mathcal{\mathcal{E}}\ar@{=}[d]\\
\mathcal{K}\wedge\mathcal{H}\ar[r] & \Lambda^{2}\mathcal{H}\ar[r] & \Lambda^{2}\mathcal{E}
}
\]
Apply $Hom(\_,\mathcal{O}_{X_{A}})$ and use 
\[
Ext^{1}(\Lambda^{2}\mathcal{H},\mathcal{O}_{X_{A}})\cong\Lambda^{2}V\otimes H^{1}(X_{A},\mathcal{O}_{X_{A}}(2m))=0;
\]
get
\[
\xymatrix{Hom(\Lambda^{2}\mathcal{E},\mathcal{O}_{X_{A}})\ar[r]\ar@{=}[d] & Hom(\Lambda^{2}\mathcal{H},\mathcal{O}_{X_{A}})\ar[r]\ar[d] & Hom(\mathcal{K}\wedge\mathcal{H},\mathcal{O}_{X_{A}})\ar[d]\ar@{->>}[r] & Ext^{1}(\Lambda^{2}\mathcal{E},\mathcal{O}_{X_{A}})\ar@{=}[d]\\
Hom(\Lambda^{2}\mathcal{E},\mathcal{O}_{X_{A}})\ar[r] & Hom(M^{1},\mathcal{O}_{X_{A}})\ar[r] & Hom(M^{0},\mathcal{O}_{X_{A}})\ar[r] & Ext^{1}(\Lambda^{2}\mathcal{E},\mathcal{O}_{X_{A}})
}
\]
 We deduce that the natural map 
\[
Hom(\mathcal{K}\wedge\mathcal{H},\mathcal{O}_{X_{A}})\rightarrow Hom(M^{0},\mathcal{O}_{X_{A}})/Hom(M^{1},\mathcal{O}_{X_{A}})
\]
is surjective; in other words we can suppose that the morphism $M^{0}\rightarrow\mathcal{O}_{X_{A}}$
comes from $Hom(\mathcal{K}\wedge\mathcal{H},\mathcal{O}_{X_{A}})$
up to homotopy. To show that also $M^{1}\rightarrow\mathcal{O}_{D_{A}}$
comes from a map $\Lambda^{2}\mathcal{H}\rightarrow\mathcal{O}_{D_{A}}$,
we may proceed as above: apply $Hom(\_,\mathcal{O}_{D_{A}})$ and
use 
\[
Ext^{1}(\Lambda^{2}\mathcal{H},\mathcal{O}_{D_{A}})\cong\Lambda^{2}V\otimes H^{1}(X_{A},\mathcal{O}_{D_{A}}(2m))=0.
\]
This proves surjectivity. 

To prove injectivity, let $f\in Hom_{K}(\mathcal{K}\wedge\mathcal{H}\rightarrow\Lambda^{2}\mathcal{H},\mathcal{O}_{X_{A}}\rightarrow\mathcal{O}_{D_{A}})$
be such that there exists a quasi isomorphism $M^{\centerdot}\rightarrow(\mathcal{K}\wedge\mathcal{H}\rightarrow\Lambda^{2}\mathcal{H})$
whose composition with $c$ admits a homotopy $h:M^{1}\rightarrow\mathcal{O}_{X_{A}}$.
We only need to prove that $h$ factors through another homotopy $\Lambda^{2}\mathcal{H}\rightarrow\mathcal{O}_{X_{A}}$;
this is again achieved by an easy chasing of the diagram above. 
\end{proof}
Since there is an obvious isomorphism 
\[
\mbox{\ensuremath{\mathbb{H}}}om_{D^{b}}(\mathcal{K}\wedge\mathcal{H}\rightarrow\Lambda^{2}\mathcal{H},\mathcal{O}_{X_{A}}\rightarrow\mathcal{O}_{D_{A}})\cong Ext{}^{1}(\Lambda^{2}\mathcal{E},\mathcal{O}_{X_{A}}(-D_{A})),
\]
the previous discussion leads us to conclude that the tangent space
$T_{\Omega}^{1,A}$ to $\overset{\circ}{Z_{\Omega}^{A}}/Aut(\mathcal{H})$
at $[(q,\mathcal{A},\Phi)]$ fits into an exact sequence of vector
spaces
\[
0\rightarrow T_{\Omega}^{1,A}\rightarrow Ext{}^{1}(\mathcal{E},\mathcal{E}(-D_{A}))\rightarrow Ext{}^{1}(\Lambda^{2}\mathcal{E},\mathcal{O}_{X_{A}}(-D_{A})).
\]

\begin{rem}
Let $A=\mathbb{K}$. The map $p_{\varphi}$ is in fact canonical (i.e.,
it only depends on the triple $(E,\varphi,a)$) since it admits the
following Yoneda-type description. Let $\xi\in Ext^{1}(E,E(-D))$
be represented by an extension
\[
\xymatrix{0\ar[r] & E(-D)\ar[r]^{\iota} & F\ar[r]^{\pi} & E\ar[r] & 0}
.
\]
Apply $E\otimes\_$ and pushout via $\varphi(-D):$
\[
\xymatrix{0\ar[r] & ker(1_{E}\otimes\iota)\ar[r]\ar@{->>}[d] & E^{\otimes2}(-D)\ar[r]^{1_{E}\otimes\iota}\ar[d]_{\varphi(-D)} & E\otimes F\ar[d]\ar[r] & E^{\otimes2}\ar[r]\ar@{=}[d] & 0\\
0\ar[r] & ker(\chi)\ar[r] & \mathcal{O}_{X}(-D)\ar[r]_{\chi} & M\ar[r]_{p} & E^{\otimes2}\ar[r] & 0
}
\]
Now, the module $ker(1_{E}\otimes\iota)$ is a torsion sheaf; it is
indeed an epimorphic image of the $0-$dimensional sheaf $\mathcal{T}or_{1}(E,E)$.
It follows that $ker(\chi)$ is torsion as well: it is then forced
to vanish. The above construction defines a linear map
\[
\varphi(1_{E}\otimes\_):Ext^{1}(E,E(-D))\rightarrow Ext^{1}(E^{\otimes2},\mathcal{O}_{X}(-D)).
\]
Similarly, define the ``adjoint'' map $\varphi(\_\otimes1_{E}).$
It is immediate to verify that the map 
\[
\varphi(\_\otimes1_{E})+\varphi(1_{E}\otimes\_):Ext^{1}(E,E(-D))\rightarrow Ext^{1}(E^{\otimes2},\mathcal{O}_{X}(-D))
\]
takes values in the subspace of skew extensions 
\[
Ext^{1}(\Lambda^{2}E,\mathcal{O}_{X}(-D))\subseteq Ext^{1}(E^{\otimes2},\mathcal{O}_{X}(-D)),
\]
and a direct check shows that the equation 
\[
\varphi(\_\otimes1_{E})+\varphi(1_{E}\otimes\_)=p_{\varphi}
\]
holds. \end{rem}
\begin{cor}
\label{thm:Obstruction}Let $(E,\alpha,\varphi)$ be a framed symplectic
sheaf whose underlying framed sheaf $(E,\alpha)$ corresponds to a
smooth point $[(E,\alpha)]\in\mathcal{M}_{X}^{D}(r,n)$, and suppose
that the map $p_{\varphi}$ is an epimorphism. Then $[(E,\alpha,\varphi)]$
is a smooth point of $\mathcal{M}_{X,\Omega}^{D}(r,n)$.\end{cor}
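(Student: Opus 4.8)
The plan is to reduce the statement to a formal smoothness assertion for the deformation functor. Since $\mathcal{M}_{X,\Omega}^{D}(r,n)$ is a fine moduli space of finite type, its completed local ring at $\xi=[E,\alpha,\varphi]$ pro-represents the deformation functor $\mathrm{Def}_{(E,\alpha,\varphi)}$, whose tangent space is $\ker p_{\varphi}$ by the theorem of the preceding subsection; hence it is enough to prove that $\mathrm{Def}_{(E,\alpha,\varphi)}$ is formally smooth, since then the complete local ring is regular and $\xi$ is a smooth point. Dually, the hypothesis that $[(E,\alpha)]$ is a smooth point of $\mathcal{M}_{X}^{D}(r,n)$ says exactly that $\mathrm{Def}_{(E,\alpha)}$ is formally smooth.

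To verify formal smoothness I would run the lifting criterion against an arbitrary small extension $0\to I\to B\to A\to 0$ of Artinian local $\mathbb{K}$-algebras with $\mathfrak{m}_{B}I=0$, reducing at once to the case $I\cong\mathbb{K}$. Given $(E_{A},\alpha_{A},\varphi_{A})\in\mathrm{Def}_{(E,\alpha,\varphi)}(A)$, first lift the underlying framed sheaf: by smoothness of $\mathrm{Def}_{(E,\alpha)}$ there exists a lift $(E_{B},\alpha_{B})$ over $B$, and the set of all such lifts is a torsor under $Ext^{1}_{X}(E,E(-D))$ (recall $\mathbb{E}xt^{1}(\mathcal{E},\mathcal{E}\to\mathcal{D})\cong Ext^{1}(E,E(-D))$ because $E$ is locally free near $D$). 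Second, for a fixed choice of $(E_{B},\alpha_{B})$ there is a well-defined obstruction class $\omega=\omega(E_{B},\alpha_{B},\varphi_{A})\in Ext^{1}_{X}(\Lambda^{2}E,\mathcal{O}_{X}(-D))$ whose vanishing is equivalent to the existence of a morphism $\varphi_{B}\colon\Lambda^{2}E_{B}\to\mathcal{O}_{X_{B}}$ reducing to $\varphi_{A}$ modulo $I$ and satisfying the framing compatibility $\varphi_{B}\mid_{D_{B}}=\Omega\circ\Lambda^{2}\alpha_{B}$; the twist by $\mathcal{O}_{X}(-D)$ is dictated by the constraint on $D$, and the uniqueness of $\varphi_{B}$ when it exists is the infinitesimal shadow of Remark \ref{rem:rigidity} together with the vanishing $Hom(\Lambda^{2}E,\mathcal{O}_{X}(-D))=0$ proved above.

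The crux is the comparison between the two obstruction theories: as the lift $(E_{B},\alpha_{B})$ is modified by a class $\eta\in Ext^{1}_{X}(E,E(-D))$, the obstruction $\omega$ changes by $p_{\varphi}(\eta)$. This is in essence the content of the Yoneda-type description of $p_{\varphi}$ recorded above — twisting the extension of $\mathcal{O}_{X_{A}}$-modules that presents $(E_{B},\alpha_{B})$ by $\eta$ replaces $\Lambda^{2}E_{B}$ by the extension obtained from $\eta\otimes 1_{E}+1_{E}\otimes\eta$ pushed forward along $\varphi$, which is precisely $p_{\varphi}(\eta)=\varphi(\_\otimes 1_{E})(\eta)+\varphi(1_{E}\otimes\_)(\eta)$; some care is needed because $\Lambda^{2}E$ is only a quotient of $E^{\otimes 2}$ and the relevant $\mathcal{T}or$ sheaves intervene, but these are exactly the torsion-theoretic points already dealt with in the two remarks preceding the corollary. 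Granting this, since $p_{\varphi}$ is an epimorphism we may choose $\eta$ with $p_{\varphi}(\eta)=-\omega$, pass to the $\eta$-twisted lift $(E_{B},\alpha_{B})$, and so arrange $\omega=0$; then $\varphi_{A}$ lifts to a compatible $\varphi_{B}$, producing $(E_{B},\alpha_{B},\varphi_{B})\in\mathrm{Def}_{(E,\alpha,\varphi)}(B)$ over $A$ and completing the lifting criterion. A dimension count is then automatic: $\mathrm{Def}_{(E,\alpha,\varphi)}$ is smooth of dimension $\dim Ext^{1}(E,E(-D))-\dim Ext^{1}(\Lambda^{2}E,\mathcal{O}_{X}(-D))$, so $\mathcal{M}_{X,\Omega}^{D}(r,n)$ is cut out in $\mathcal{M}_{X}^{D}(r,n)$ near $\xi$ by a regular sequence of that expected length.

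I expect the main obstacle to be exactly the identification used in the third paragraph: pinning the obstruction $\omega$ down intrinsically as an $Ext^{1}(\Lambda^{2}E,\mathcal{O}_{X}(-D))$-valued class (rather than the a priori $Ext^{1}(\Lambda^{2}E,\mathcal{O}_{X})$-valued obstruction to extending $\varphi_{A}$ with no condition on $D$), and then matching its variation with the lift against the Yoneda formula for $p_{\varphi}$, all through the quotient $E^{\otimes 2}\twoheadrightarrow\Lambda^{2}E$. A more self-contained alternative, closer to the constructions of the paper, is to argue on the parameter schemes: $\overset{\circ}{Z}\to\mathcal{M}_{X}^{D}(r,n)$ is a principal bundle, so $\overset{\circ}{Z}$ is smooth at $(q,A)$, and one would show, using the tangent-space computations of subsection \ref{sub:infparam} and the exact sequence $0\to T^{1}_{\Omega}\to Ext^{1}(\mathcal{E},\mathcal{E}(-D))\overset{p_{\varphi}}{\longrightarrow}Ext^{1}(\Lambda^{2}\mathcal{E},\mathcal{O}_{X}(-D))$, that the closed subscheme $\overset{\circ}{Z}_{\Omega}$ is cut out near $(q,A,\phi)$ by a regular sequence whose length is forced by surjectivity of $p_{\varphi}$ to equal $\dim Ext^{1}(\Lambda^{2}E,\mathcal{O}_{X}(-D))$; this makes $\overset{\circ}{Z}_{\Omega}$, hence $\mathcal{M}_{X,\Omega}^{D}(r,n)$, smooth at $\xi$.
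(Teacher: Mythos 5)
Your strategy is sound in outline but genuinely different from the paper's. You run the classical infinitesimal lifting criterion against a small extension $0\to I\to B\to A\to 0$: lift the underlying framed sheaf (using smoothness of $\mathcal{M}_{X}^{D}(r,n)$ at $[(E,\alpha)]$), attach to each such lift an obstruction class $\omega\in Ext^{1}(\Lambda^{2}E,\mathcal{O}_{X}(-D))\otimes I$ for extending $\varphi_{A}$ compatibly with the framing, and use surjectivity of $p_{\varphi}$ together with the torsor structure on the set of lifts to kill $\omega$. The paper instead invokes the $T^{1}$-lifting property: smoothness at $[(E,\alpha,\varphi)]$ is equivalent to surjectivity of the restriction maps $T^{1}(\mathcal{E}_{n},\mathcal{A}_{n},\Phi_{n})_{A_{n}}\rightarrow T^{1}(\mathcal{E}_{n-1},\mathcal{A}_{n-1},\Phi_{n-1})_{A_{n-1}}$ for $A_{n}=\mathbb{C}[t]/t^{n+1}$, and this is extracted by a snake-lemma chase on the three-row diagram whose rows are $0\rightarrow T^{1}\rightarrow Ext^{1}(\mathcal{E}_{n},\mathcal{E}_{n}(-D_{n}))\rightarrow Ext^{1}(\Lambda^{2}\mathcal{E}_{n},\mathcal{O}_{X_{n}}(-D_{n}))$, using Lemma \ref{lem:Spectral tor}, the hypothesis $\mathrm{coker}(b)=0$ coming from smoothness of the framed point, and surjectivity of $h=p_{\varphi}$ over the closed point. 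The payoff of the paper's route is that no obstruction class is ever constructed or compared: only tangent spaces appear, and the whole argument reduces to surjectivity statements in a diagram.

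That said, the crux of your argument --- the assertion that replacing the lift $(E_{B},\alpha_{B})$ by its twist along $\eta\in Ext^{1}(E,E(-D))$ changes $\omega$ by exactly $p_{\varphi}(\eta)$ --- is stated but not proved, and it is precisely where the difficulty sits. To make it rigorous you must (i) justify that $0\rightarrow I\otimes\Lambda^{2}E\rightarrow\Lambda^{2}E_{B}\rightarrow\Lambda^{2}E_{A}\rightarrow0$ is exact on the left and that the obstruction space identifies with $Ext^{1}_{\mathcal{O}_{X}}(\Lambda^{2}E,\mathcal{O}_{X}(-D))\otimes I$ (note that $\Lambda^{2}E_{B}$ is not obviously $B$-flat, so Lemma \ref{lem:Spectral tor} does not apply verbatim and the torsion of $E^{\otimes2}$ intervenes), and (ii) carry the Yoneda description of $p_{\varphi}$ through the quotient $E^{\otimes2}\twoheadrightarrow\Lambda^{2}E$ and through the compatibility condition on $D$ simultaneously. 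You flag these points yourself; until they are carried out the proof is incomplete, whereas the $T^{1}$-lifting argument of the paper needs only the tangent-space computations already established in subsection \ref{sub:infparam}. Your closing alternative (cutting $\overset{\circ}{Z}_{\Omega}$ out of $\overset{\circ}{Z}$ by a regular sequence of the expected length) would also work but is likewise only sketched.
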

\begin{proof}
Let $A$ be an artinian local $\mathbb{K}-$algebra and let $(\mathcal{E},\mathcal{A},\Phi)\in\mathfrak{M}_{X,\Omega}^{D}(A)$
be a framed symplectic sheaf over $X_{A}$. We proved that the space
of its infinitesimal deformations can be written as 
\[
T^{1}(\mathcal{E},\mathcal{A},\Phi)_{A}=ker(Ext{}_{\mathcal{O}_{X_{A}}}^{1}(\mathcal{E},\mathcal{E}(-D_{A}))\rightarrow Ext{}_{\mathcal{O}_{X_{A}}}^{1}(\Lambda^{2}\mathcal{E},\mathcal{O}_{X_{A}}(-D_{A}))).
\]
We want to give a sufficient condition for the smoothness at a closed
point $(E,\alpha,\varphi)$ by means of the $T^{1}-$lifting property.
In our setting, this property may be expressed in the following way.
Let $A_{n}\cong\mathbb{C}[t]/t^{n+1}$, $n\in\mathbb{N}$. Let $(\mathcal{E}_{n},\mathcal{A}_{n},\Phi_{n})\in\mathfrak{M}_{X,\Omega}^{D}(A_{n})$
and $(\mathcal{E}_{n-1},\mathcal{A}_{n-1},\Phi_{n-1})\in\mathfrak{M}_{X,\Omega}^{D}(A_{n-1})$
be its pullback via the natural map $A_{n}\twoheadrightarrow A_{n-1}$.
We get a map
\[
T^{1}(\mathcal{E}_{n},\mathcal{A}_{n},\Phi_{n})_{A_{n}}\rightarrow T^{1}(\mathcal{E}_{n-1},\mathcal{A}_{n-1},\Phi_{n-1})_{A_{n-1}}
\]
and the underlying closed point $[(E,\alpha,\varphi)]$, where $(E,\alpha,\varphi)=(\mathcal{E}_{n},\mathcal{A}_{n},\Phi_{n})\,mod(t)$,
turns out to be a smooth point of $\mathcal{M}_{X,\Omega}^{D}(r,n)$
if and only if the above map is surjective for any $n$. From the
exact sequence of $\mathcal{O}_{X_{n}}(:=\mathcal{O}_{X_{A_{n}}})$
modules
\[
0\rightarrow\mathcal{O}_{X}\rightarrow\mathcal{O}_{X_{n}}\rightarrow\mathcal{O}_{X_{n-1}}\rightarrow0
\]
we get exact sequences
\[
0\rightarrow E\rightarrow\mathcal{E}_{n}\rightarrow\mathcal{E}_{n-1}\rightarrow0
\]
and

\[
0\rightarrow\Lambda^{2}E\rightarrow\Lambda^{2}\mathcal{E}_{n}\rightarrow\Lambda^{2}\mathcal{E}_{n-1}\rightarrow0.
\]
Making extensive use of Lemma \ref{lem:Spectral tor}, we obtain a
commutative diagram with exact rows and columns

\[
\xymatrix{0\ar[r] & T^{1}\ar[r]\ar[d] & Ext_{\mathcal{O}_{X}}{}^{1}(E,E(-D))\ar[r]^{h}\ar[d] & Ext_{\mathcal{O}_{X}}^{1}(\Lambda^{2}E,\mathcal{O}_{X}(-D))\ar[d]\\
0\ar[r] & T^{1,n}\ar[r]\ar[d]_{a} & Ext{}_{\mathcal{O}_{X_{n}}}^{1}(\mathcal{E}_{n},\mathcal{E}_{n}(-D_{n}))\ar[r]^{p_{n,\varphi}}\ar[d]_{b} & Ext_{\mathcal{O}_{X_{n}}}^{1}(\Lambda^{2}\mathcal{E}_{n},\mathcal{O}_{X_{n}}(-D_{n}))\ar[d]_{c}\\
0\ar[r] & T^{1,n-1}\ar[r] & Ext{}_{\mathcal{O}_{X_{n-1}}}^{1}(\mathcal{E}_{n-1},\mathcal{E}_{n-1}(-D_{n-1})\otimes W)\ar[r] & Ext_{\mathcal{O}_{X_{n-1}}}^{1}(\Lambda^{2}\mathcal{E}_{n-1},\mathcal{O}_{X_{n-1}}(-D_{n-1}))
}
\]
Call $\tilde{c}$ the restriction of $c$ to the image of $p_{n,\varphi}$
in the diagram; by applying the snake lemma to the second and third
row, we get an exact sequence of vector spaces
\[
ker(a)\rightarrow ker(b)\rightarrow ker(\tilde{c})\rightarrow coker(a)\rightarrow coker(b)\rightarrow coker(\tilde{c}).
\]
We know by hypothesis that $coker(b)=0$. A sufficient condition to
get or claim $coker(a)=0$ is $ker(b)\rightarrow ker(c)$ be surjective
$(\implies ker(c)=ker(\tilde{c}))$. This condition is clearly satisfied
if $h$ is surjective. 
\end{proof}
We conclude the section with a direct application to the case of bundles.
\begin{cor}
\label{cor:smooth lf points}If $(E,\alpha,\varphi)$ is a framed
symplectic bundle, the corresponding point $[(E,\alpha,\varphi)]\in\mathcal{M}_{X,\Omega}^{D,reg}(r,n)$
is smooth if $[(E,\alpha)]\in\mathcal{M}_{X}^{D,reg}(r,n)$ is. \end{cor}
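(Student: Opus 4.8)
The plan is to deduce Corollary \ref{cor:smooth lf points} from Corollary \ref{thm:Obstruction} by verifying its two hypotheses for a framed symplectic bundle. The first hypothesis holds by assumption: we suppose $[(E,\alpha)]$ is a smooth point of $\mathcal{M}_X^D(r,n)$. So the entire content of the proof reduces to checking that for a \emph{locally free} $E$ the canonical map
\[
p_\varphi:Ext^1_{\mathcal{O}_X}(E,E(-D))\rightarrow Ext^1_{\mathcal{O}_X}(\Lambda^2 E,\mathcal{O}_X(-D))
\]
is surjective. Once this is established, Corollary \ref{thm:Obstruction} applies verbatim and gives smoothness of $[(E,\alpha,\varphi)]$ in $\mathcal{M}_{X,\Omega}^{D,reg}(r,n)$.

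To prove surjectivity of $p_\varphi$ in the bundle case, I would exploit the symplectic isomorphism. Since $E$ is locally free and $\varphi$ induces an isomorphism $\varphi^\vee\colon E\xrightarrow{\sim}E^\vee$ (Remark after the definition of framed symplectic sheaf), we may use $\varphi$ to identify $E\otimes E^\vee\cong E\otimes E$ and hence transport the trace/contraction structures. The key observation is that for a bundle $\Lambda^2 E$ is a direct summand of $E^{\otimes 2}$ via the splitting $0\to S^2E\to E^{\otimes2}\to\Lambda^2E\to0$, so $Ext^1(\Lambda^2E,\mathcal{O}_X(-D))$ is a direct summand of $Ext^1(E^{\otimes2},\mathcal{O}_X(-D))\cong Ext^1(E\otimes E^\vee\otimes E^\vee,\mathcal{O}_X(-D))$, where I used $\varphi^\vee\otimes 1$ on one factor. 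Dualizing appropriately, $Ext^1(E^{\otimes2},\mathcal{O}_X(-D))\cong Ext^1(E,E^\vee(-D))\cong Ext^1(E,E(-D))$, the last isomorphism again via $\varphi$. Under these identifications I would check that $p_\varphi=\varphi(\_\otimes 1_E)+\varphi(1_E\otimes\_)$ (the Yoneda description from the Remark preceding Corollary \ref{thm:Obstruction}) becomes, up to the sign conventions, the symmetrization map $Ext^1(E,E(-D))\to Ext^1(\Lambda^2E,\mathcal{O}_X(-D))$, i.e. the composition of the identification $Ext^1(E,E(-D))\cong Ext^1(E^{\otimes2},\mathcal{O}_X(-D))$ with the projection onto the skew-symmetric summand. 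Such a projection is manifestly surjective, whence so is $p_\varphi$.

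A cleaner way to organize the same argument: because $E$ is a bundle, $E(-D)\cong E^\vee(-D)\cong \mathcal{H}om(E,\mathcal{O}_X(-D))$ via $\varphi$, so $Ext^1(E,E(-D))\cong Ext^1(E,\mathcal{H}om(E,\mathcal{O}_X(-D)))\cong Ext^1(E\otimes E,\mathcal{O}_X(-D))$, and under this chain of isomorphisms the map $p_\varphi$ is identified with the canonical surjection $E\otimes E\twoheadrightarrow\Lambda^2 E$ applied contravariantly. Concretely, one represents a class in $Ext^1(\Lambda^2E,\mathcal{O}_X(-D))$ by an extension, pulls it back along $E^{\otimes2}\twoheadrightarrow\Lambda^2E$ to get a class in $Ext^1(E^{\otimes2},\mathcal{O}_X(-D))$, transports it to $Ext^1(E,E(-D))$, and checks using the Yoneda description that $p_\varphi$ sends it back to the original class — the only subtlety being bookkeeping of the two summands $\varphi(\_\otimes1_E)$ and $\varphi(1_E\otimes\_)$ and the factor of $2$ coming from $\mathrm{char}(\mathbb{K})\neq2$, which is exactly where local freeness (hence the splitting) is used.

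The main obstacle I anticipate is precisely this compatibility check: verifying that, once everything is translated through the isomorphism $\varphi^\vee$, the abstractly-defined map $p_\varphi$ really coincides with the projection onto the skew-symmetric part of $E^{\otimes2}$ rather than, say, some twisted or degenerate variant. This requires unwinding the Yoneda description $p_\varphi=\varphi(\_\otimes1_E)+\varphi(1_E\otimes\_)$ given in the Remark, applying $E\otimes\_$ and pushing out via $\varphi(-D)$ on each factor in turn, and confirming that the sum of the two pushouts lands in and surjects onto the skew summand. Since for bundles all the $\mathcal{T}or$ terms that appeared in the general Yoneda construction vanish, this computation is formal and presents no genuine difficulty beyond careful diagram-chasing; I would present it in two or three lines, invoking the splitting of $0\to S^2E\to E^{\otimes2}\to\Lambda^2E\to0$ and the identification $Ext^1(E,E(-D))\cong Ext^1(E^{\otimes2},\mathcal{O}_X(-D))$ induced by $\varphi$.
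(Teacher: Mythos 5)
Your proposal is correct and follows essentially the same route as the paper: the paper likewise reduces everything to the surjectivity of $p_{\varphi}$ via Corollary \ref{thm:Obstruction}, and obtains it from the split short exact sequence $0\rightarrow Ad^{\varphi}(E)(-D)\rightarrow\mathcal{H}om(E,E(-D))\rightarrow\mathcal{H}om(\Lambda^{2}E,\mathcal{O}(-D))\rightarrow0$, whose splitting $\psi\mapsto\frac{1}{2}\psi(\varphi(-D))^{-1}$ is precisely your characteristic-$\neq2$ projection onto the skew-symmetric summand transported through $\varphi:E\rightarrow E^{\vee}$. The only cosmetic difference is that the paper carries out the decomposition at the level of sheaves and then passes to $H^{1}$, whereas you decompose $Ext^{1}(E,E(-D))\cong Ext^{1}(E^{\otimes2},\mathcal{O}_{X}(-D))$ directly.
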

\begin{proof}
Let $(E,\alpha,\varphi)$ be a symplectic bundle. Consider the following
map: 
\[
\mathcal{H}om(E,E(-D))\rightarrow\mathcal{H}om(\Lambda^{2}E,\mathcal{O}(-D))\subseteq\mathcal{H}om(E,E^{\vee}(-D))
\]
defined on sections by 
\[
f\mapsto\varphi(-D)\circ f+(f(-D))^{\vee}\varphi,
\]
where $\varphi$ is interpreted as an isomorphism $E\rightarrow E^{\vee}$.
The kernel of this map is identified with the bundle $Ad^{\varphi}(E)(-D)$,
i.e. the twisted adjoint bundle associated with the principal $SP-$bundle
defined by $(E,\varphi)$. The map defined above is easily proved
to be surjective since $\varphi$ is an isomorphism. We obtain a short
exact sequence of bundles which is in fact split-exact, as the map
\[
\mathcal{H}om(\Lambda^{2}E,\mathcal{O}(-D))\rightarrow\mathcal{H}om(E,E(-D)),\,\psi\mapsto\frac{1}{2}\psi(\varphi(-D))^{-1}
\]
gives a splitting. In particular, the $H^{1}-$factors of the corresponding
long exact sequence in cohomology define an exact sequence 
\[
0\rightarrow H^{1}(Ad^{\varphi}(E)(-D))\rightarrow Ext^{1}(E,E(-D))\rightarrow Ext^{1}(\Lambda^{2}E,\mathcal{O}(-D)))\rightarrow0
\]
whose second map is just the map $p_{\varphi}$. The surjectivity
of the latter provides the result. We remark that the result does
not require the natural obstruction space $H^{2}(X,Ad^{\varphi}(-D))$,
from the theory of principal bundles, to vanish.
\end{proof}

\section{Moduli spaces of framed symplectic sheaves on $\mathbb{P}_{\mathbb{C}}^{2}$}

We fix now $\mathbb{K}=\mathbb{C}$. The moduli space of framed sheaves
on $X=\mathbb{P}^{2}$ (where the framing divisor is a line) has a
description in terms of the so called \emph{ADHM data, }which means
that\emph{ }it can be realized as a quotient of certain spaces of
matrices. In the present section we prove that an analogous result
holds in the symplectic case. In the next section, we will apply this
result for proving that our moduli space is irreducible.

\subsection{ADHM data and monads}

Let $r$, $n$ be positive integers, and let $W,$ $V$ be complex
vector spaces with $dim(W)=r$, $dim(V)=n$. 
\begin{defn}
\label{def:ADHM classic}The \emph{variety of ADHM data }of type $(r,n)$
is the closed subvariety of the affine space 
\[
End(V)^{\oplus2}\oplus Hom(W,V)\oplus Hom(V,W)
\]
defined by
\[
\mathbb{M}(r,n)=\{(A,B,I,J)\mid[A,B]+IJ=0\}.
\]
The equation $[A,B]+IJ=0$ will be called \emph{ADHM equation. }
\end{defn}
The group $GL(V)$ acts on $\mathbb{M}(r,n)$ naturally:
\[
g\cdot(A,B,I,J)=(gAg^{-1},gBg^{-1},gI,Jg^{-1}).
\]

\begin{defn}
We call an ADHM datum $(A,B,I,J)$
\begin{itemize}
\item \emph{stable} if there exists no proper subspace $S\subseteq V$ satisfying
$A(S)\subseteq S,$ $B(S)\subseteq S$ and $im(I)\subseteq S$;
\item \emph{co-stable} if there exists no nonzero subspace $S\subseteq V$
satisfying $A(S)\subseteq S,$ $B(S)\subseteq S$ and $ker(J)\supseteq S$.
\end{itemize}
\end{defn}
Denote by $\mathbb{M}^{s}(r,n)$ (resp $\mathbb{M}^{c}(r,n)$) the
subset of $\mathbb{M}(r,n)$ consisting of stable (resp. co-stable)
ADHM data. They are open and invariant subsets. Call $\mathbb{M}^{sc}(r,n)$
their intersection. The motivation to define such spaces lies in the
following theorem.
\begin{thm}
\cite{Na,BM}\label{thm:ADHM=00003Dmoduli} The $GL(V)$ action is
free and locally proper on $\mathbb{M}^{s}(r,n)$. Let $\mathbb{M}^{s}/GL(V)$
be the associated geometric quotient. There exists an isomorphisms
$\mathcal{M}_{\mathbb{P}^{2}}^{l}(r,n)\cong\mathbb{M}^{s}/GL(V)$,
where $l\subseteq\mathbb{P}^{2}$ is a fixed line, which maps isomorphically
$\mathcal{M}^{reg}(r,n)$ onto $\mathbb{M}^{sc}/GL(V)$. $\mathcal{M}(r,n)$
is a smooth connected quasi-projective variety of dimension $2rn$.\end{thm}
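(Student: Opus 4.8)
The plan is to reduce this statement to the classical ADHM-type correspondence for framed torsion-free sheaves on $\mathbb{P}^2$, which is essentially due to Nakajima and, in the ``nonreduced'' form needed here, appears in \cite{BM}. First I would recall the monad construction: to an ADHM datum $(A,B,I,J)\in\mathbb{M}(r,n)$ one associates a complex of sheaves on $\mathbb{P}^2$
\[
V\otimes\mathcal{O}(-1)\xrightarrow{\ a\ }(V\otimes\mathcal{O})^{\oplus2}\oplus(W\otimes\mathcal{O})\xrightarrow{\ b\ }V\otimes\mathcal{O}(1),
\]
where $a$ and $b$ are built from $A,B,I,J$ and the homogeneous coordinates vanishing on $l$, with $ba=0$ being equivalent to the ADHM equation $[A,B]+IJ=0$. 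One checks that $a$ is injective as a map of sheaves and $b$ is surjective precisely on $\mathbb{M}^s(r,n)$ (stability kills the cokernel of $a$ on a suitable chart, hence globally by the $\mathrm{PGL}$-symmetry, and injectivity of $a^\vee$ analogously), so that over $\mathbb{M}^s$ the cohomology sheaf $E=\ker b/\operatorname{im}a$ is torsion-free of rank $r$ with $c_2=n$, and the piece $W\otimes\mathcal{O}$ furnishes a canonical trivialization $E|_l\xrightarrow{\sim}\mathcal{O}_l\otimes W$. This assignment is $GL(V)$-equivariant and, by the standard Beilinson-type argument, every $l$-framed sheaf arises this way from data that are unique up to $GL(V)$; this yields a bijection on closed points $\mathbb{M}^s/GL(V)\to\mathcal{M}^l_{\mathbb{P}^2}(r,n)$, and one upgrades it to an isomorphism of schemes by performing the same construction in families over an arbitrary base, i.e. by writing a universal monad over $\mathbb{M}^s$ and invoking the fine moduli property of $\mathcal{M}^l_{\mathbb{P}^2}(r,n)$ (Prop. \ref{prop:Bound+uniq} and the construction of the previous section).

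Next I would address freeness and the existence of the geometric quotient. Freeness of the $GL(V)$-action on $\mathbb{M}^s(r,n)$ follows from the stability condition: a stabilizing $g$ must act as the identity on the smallest $A,B$-invariant subspace containing $\operatorname{im}I$, which is all of $V$. For the quotient, I would note that on $\mathbb{M}^{sc}(r,n)$ the action is in addition proper (it is a principal $GL(V)$-bundle in the complex-analytic, indeed algebraic, sense, because one can build $V$ back functorially as the fibre $H^1(E(-1))$), and on the larger open set $\mathbb{M}^s$ one still has a geometric quotient since every orbit is closed in $\mathbb{M}^s$ and the action is free with affine stabilizers; this is exactly the content transported from the sheaf side via the isomorphism with $\mathcal{M}^l_{\mathbb{P}^2}(r,n)$, whose existence as a geometric quotient was established in Section~3. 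The identification of $\mathbb{M}^{sc}/GL(V)$ with $\mathcal{M}^{reg}(r,n)$ is then immediate: co-stability is equivalent to surjectivity of $b^\vee$, which together with stability forces the monad cohomology $E$ to be locally free.

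Finally, for the smoothness, connectedness and dimension of $\mathcal{M}(r,n)$ I would compute the Zariski tangent space and show the expected dimension is attained everywhere. The ADHM presentation makes $\mathbb{M}^s$ cut out inside the affine space $\operatorname{End}(V)^{\oplus2}\oplus\operatorname{Hom}(W,V)\oplus\operatorname{Hom}(V,W)$ of dimension $2n^2+2rn$ by the $n^2$ equations $[A,B]+IJ=0$; the differential of the moment-type map $\mu(A,B,I,J)=[A,B]+IJ$ is surjective at every stable point (its cokernel is dual to the kernel of $\operatorname{id}\otimes(\cdot)$ on $\operatorname{End}(V)$ against the $A,B$-invariant-subspace criterion, which vanishes by stability), so $\mathbb{M}^s$ is smooth of dimension $2n^2+2rn-n^2=n^2+2rn$, and quotienting by the free $GL(V)$-action of dimension $n^2$ leaves $\dim\mathcal{M}(r,n)=2rn$. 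Connectedness I would deduce from irreducibility of $\mathbb{M}^s$, which in turn follows because $\mathbb{M}^{sc}$ is open dense (its complement has positive codimension) and $\mathbb{M}^{sc}/GL(V)\cong\mathcal{M}^{reg}(r,n)$ is known to be connected; alternatively one runs a direct degeneration argument deforming any stable datum to the standard one with $V$-action built from a single Jordan block. The main obstacle I anticipate is not any single step but the bookkeeping needed to make the monad construction work \emph{in families} with enough care that the bijection on points promotes to a scheme isomorphism — in particular checking that the universal monad over $\mathbb{M}^s$ has the right flatness and base-change properties, and that the framing on $l$ is respected functorially; once that is in place, freeness, properness on $\mathbb{M}^{sc}$, the dimension count via the surjectivity of $d\mu$, and connectedness all follow by the routine arguments sketched above.
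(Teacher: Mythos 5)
The paper does not prove this theorem: it is quoted verbatim from \cite{Na,BM}, and the only related material in the text is Remark \ref{rem: how to get a fsh out of ADHM}, which recalls the monad construction without argument. So there is no internal proof to compare yours against; what can be said is that your sketch follows the standard route of those references (monad associated to $(A,B,I,J)$, stability $\Leftrightarrow$ surjectivity of $\beta$, freeness of the action from the invariant-subspace criterion, identification with the Huybrechts--Lehn moduli space via a universal monad, smoothness and the dimension count from surjectivity of the differential of $(A,B,I,J)\mapsto[A,B]+IJ$ at stable points), and in outline it is correct. Two points deserve more care. First, the connectedness step as written is close to circular: you propose to deduce connectedness of $\mathbb{M}^{s}$ from connectedness of $\mathcal{M}^{reg}(r,n)\cong\mathbb{M}^{sc}/GL(V)$ together with density of $\mathbb{M}^{sc}$ in $\mathbb{M}^{s}$, but both facts are themselves normally established \emph{from} the ADHM description --- density of the costable locus is exactly the kind of explicit deformation argument this paper must redo in Section \ref{sec:irred} for the symplectic case --- so your fallback ``direct degeneration'' is not an alternative but the actual content of the step. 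Second, some local imprecisions: torsion-freeness of $E=\ker\beta/\mathrm{im}\,\alpha$ requires that $\alpha$ drop rank only on a finite set, not merely that it be injective as a sheaf map; and the description of the cokernel of $d\mu$ is garbled --- the correct statement is that an element $C\in End(V)$ annihilating the image of $d\mu$ under the trace pairing satisfies $[A,C]=[B,C]=0$ and $CI=0$, so that $\ker(C)$ is an $A,B$-invariant subspace containing $im(I)$, whence $C=0$ by stability. Both issues are repairable, and the cited references contain the full details.
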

\begin{rem}
\label{rem: how to get a fsh out of ADHM}We explain how to associate
a framed sheaf to an ADHM data $(A,B,I,J)$. Fix homogeneous coordinates
$(x:y:z)$ on $\mathbb{P}^{2}$, so that $l=\{z=0\}$. Define the
two maps of coherent sheaves $\alpha=\alpha(A,B,I,J)$ and $\beta=\beta(A,B,I,J)$:
\[
\alpha:\mathcal{O}_{\mathbb{P}^{2}}(-1)\otimes V\rightarrow\mathcal{O}_{\mathbb{P}^{2}}\otimes(V\oplus V\oplus W),\,\alpha=(z\cdot A+x,z\cdot B+y,z\cdot J)^{\top};
\]
\[
\beta:\mathcal{O}_{\mathbb{P}^{2}}\otimes(V\oplus V\oplus W)\rightarrow\mathcal{O}_{\mathbb{P}^{2}}(1)\otimes V,\,\beta=(-z\cdot B-y,z\cdot A+x,z\cdot I).
\]
The equation $[A,B]+IJ=0$ is indeed equivalent to $\beta\circ\alpha=0$,
and the stability and co-stability conditions for the datum correspond
respectively to the surjectivity of $\beta$ and injectivity of $\alpha$
as a map of \emph{vector bundles }(as a map of coherent sheaves, $\alpha$
is automatically injective). Thus, a three-term complex of sheaves,
exact anywhere but in degree $0$, has been associated with an ADHM
datum:
\[
\mathcal{O}_{\mathbb{P}^{2}}(-1)\otimes V\overset{\alpha}{\longrightarrow}\mathcal{O}_{\mathbb{P}^{2}}\otimes(V^{\oplus2}\oplus W)\overset{\beta}{\longrightarrow}\mathcal{O}_{\mathbb{P}^{2}}(1)\otimes V.
\]
Define $E=ker(\beta)/im(\alpha)$. If one restricts the monad to $D$
(i.e. imposes $z=0$), its cohomology defines the trivial bundle $\mathcal{O}_{D}\otimes W$
on $D$. Thus, we get an induced isomorphism $a:E\mid_{D}\rightarrow\mathcal{O}_{D}\otimes W$.
In addition, it turns out that any framed sheaf on $\mathbb{P}^{2}$
can be realized as the cohomology of a complex as above, estabilishing
an isomorphism as stated.
\end{rem}
We shall see that it is possible to interpret framed symplectic sheaves
by means of suitable modification of these ADHM data. We start with
a definition.
\begin{defn}
We define a \emph{monad} $M$ on a scheme $S$ to be a complex of
locally free sheaves
\[
\xymatrix{\mathcal{U}\ar[r]^{\alpha} & \mathcal{W}\ar[r]^{\beta} & \mathcal{T}}
\]
with $\alpha$ injective and $\beta$ surjective. Let $\mathcal{E}(M)=ker(\beta)/im(\alpha)$. \end{defn}
\begin{rem}
Let $\mathcal{T}^{\vee}\overset{\beta^{\vee}}{\rightarrow}\mathcal{W}^{\vee}\overset{\alpha^{\vee}}{\rightarrow}\mathcal{U}^{\vee}$
be the dual complex $M^{\vee}$. This is no longer a monad since $\alpha^{\vee}$
need not be surjective. Nevertheless, we have a natural isomorphism
$H^{1}(M^{\vee})\cong\mathcal{E}^{\vee}$. This is proved using the
so called \emph{display of the monad}, as follows. From the monad
$M$ we construct a commutative diagram with exact rows and columns:
\[
\xymatrix{ & 0\ar[d] & 0\ar[d]\\
 & \mathcal{U}\ar[d]\ar@{=}[r] & \mathcal{U}\ar[d]\\
0\ar[r] & ker(\beta)\ar[d]\ar[r] & \mathcal{W}\ar[d]\ar[r] & \mathcal{T}\ar[r]\ar@{=}[d]\ar[r] & 0\\
0\ar[r] & \mathcal{E}\ar[d]\ar[r] & coker(\alpha)\ar[d]\ar[r] & \mathcal{T}\ar[r] & 0\\
 & 0 & 0
}
\]
Passing to duals, we get a diagram

\[
\xymatrix{ &  & 0\ar[d] & 0\ar[d]\\
0\ar[r] & \mathcal{T}^{\vee}\ar[r]\ar@{=}[d] & coker(\alpha)^{\vee}\ar[d]\ar[r] & \mathcal{E}^{\vee}\ar[r]\ar[d] & 0\\
0\ar[r] & \mathcal{T}^{\vee}\ar[r] & \mathcal{W}^{\vee}\ar[d]\ar[r] & ker(\beta)^{\vee}\ar[r]\ar[d]\ar[r] & 0\\
 &  & \mathcal{U}^{\vee}\ar@{=}[r] & \mathcal{U}^{\vee}
}
\]
which gives $ker(\alpha^{\vee})/im(\beta^{\vee})\cong\mathcal{E}^{\vee}$. 
\end{rem}
In particular, for any morphism of complexes
\[
\xymatrix{\mathcal{U}\ar[r]\ar[d] & \mathcal{W}\ar[d]\ar[r] & \mathcal{T}\ar[d]\\
\mathcal{T}^{\vee}\ar[r] & \mathcal{W}^{\vee}\ar[r] & \mathcal{U}^{\vee}
}
\]
we obtain a morphism $\mathcal{E}\rightarrow\mathcal{E}^{\vee}$. 

The following proposition is a slight generalization of \cite[Lemma 4.1.3]{OSS}. 
\begin{prop}
\label{prop:oko}Fix two complexes
\[
M:\:\xymatrix{\mathcal{U}\ar[r]^{\alpha} & \mathcal{W}\ar[r]^{\beta} & \mathcal{T}}
\]
\[
M^{\prime}:\:\xymatrix{\mathcal{U}^{\prime}\ar[r]^{\alpha^{\prime}} & \mathcal{W}^{\prime}\ar[r]^{\beta^{\prime}} & \mathcal{T}^{\prime}}
\]
where $M$ is a monad and $\alpha^{\prime}$ is injective. Let $\mathcal{E}=H^{1}(M)$
and $\mathcal{E}^{\prime}=H^{1}(M^{\prime})$. Assume that the following
vanishings hold:
\[
Hom_{S}(\mathcal{W},\mathcal{U}^{\prime})=Hom_{S}(\mathcal{T},\mathcal{W}^{\prime})=0
\]
\[
Ext_{S}^{1}(\mathcal{T},\mathcal{U}^{\prime})=Ext_{S}^{1}(\mathcal{W},\mathcal{U}^{\prime})=Ext_{S}^{1}(\mathcal{T},\mathcal{W}^{\prime})=Ext_{S}^{2}(\mathcal{T},\mathcal{U}^{\prime})=0.
\]
Then the natural morphism $Hom(M,M^{\prime})\rightarrow Hom_{S}(\mathcal{E},\mathcal{E}^{\prime})$
is a bijection. \end{prop}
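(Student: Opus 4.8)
The plan is to adapt the classical display-of-a-monad argument (a direct generalization of \cite[Lemma 4.1.3]{OSS}) to the present asymmetric situation, where $M$ is a genuine monad but $M'$ is only assumed to have $\alpha'$ injective. First I would put $\mathcal{K}=ker(\beta)\subseteq\mathcal{W}$ and $\mathcal{K}'=ker(\beta')\subseteq\mathcal{W}'$; since $\alpha,\alpha'$ are injective and $M,M'$ are complexes, one identifies $\mathcal{U}\cong im(\alpha)\subseteq\mathcal{K}$ and $\mathcal{U}'\cong im(\alpha')\subseteq\mathcal{K}'$, so that there are short exact sequences
\[
0\rightarrow\mathcal{K}\rightarrow\mathcal{W}\overset{\beta}{\rightarrow}\mathcal{T}\rightarrow0,\qquad 0\rightarrow\mathcal{U}\overset{\alpha}{\rightarrow}\mathcal{K}\overset{p}{\rightarrow}\mathcal{E}\rightarrow0,
\]
and, likewise, $0\rightarrow\mathcal{U}'\overset{\alpha'}{\rightarrow}\mathcal{K}'\overset{p'}{\rightarrow}\mathcal{E}'\rightarrow0$ (here $\mathcal{E}=\mathcal{K}/im(\alpha)$, $\mathcal{E}'=\mathcal{K}'/im(\alpha')$, using $\beta$ surjective). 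Any morphism of complexes $(u,w,t):M\rightarrow M'$ satisfies $\beta'w=t\beta$, so $w(\mathcal{K})\subseteq\mathcal{K}'$, and $w\alpha=\alpha'u$, so $w(im(\alpha))\subseteq im(\alpha')$; the natural map $Hom(M,M')\rightarrow Hom_S(\mathcal{E},\mathcal{E}')$ sends $(u,w,t)$ to the map induced by $w|_{\mathcal{K}}$ on these quotients, and the task is to show this assignment is a bijection.

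To prove injectivity I would assume $(u,w,t)$ induces the zero map on cohomology. Then $w(\mathcal{K})\subseteq im(\alpha')=ker(p')$, so $w|_{\mathcal{K}}$ is an element of $Hom_S(\mathcal{K},\mathcal{U}')$; applying $Hom_S(-,\mathcal{U}')$ to the first sequence above and using $Hom_S(\mathcal{W},\mathcal{U}')=Ext_S^1(\mathcal{T},\mathcal{U}')=0$ forces $Hom_S(\mathcal{K},\mathcal{U}')=0$, hence $w|_{\mathcal{K}}=0$. Since $\beta$ is surjective, $w$ then factors as $w=\bar w\circ\beta$ with $\bar w:\mathcal{T}\rightarrow\mathcal{W}'$, and $Hom_S(\mathcal{T},\mathcal{W}')=0$ gives $w=0$; finally $\alpha'u=w\alpha=0$ and $t\beta=\beta'w=0$ force $u=t=0$. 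So $(u,w,t)=0$.

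For surjectivity, given $\varphi:\mathcal{E}\rightarrow\mathcal{E}'$, I would first lift $\varphi\circ p:\mathcal{K}\rightarrow\mathcal{E}'$ through $p'$: the obstruction lives in $Ext_S^1(\mathcal{K},\mathcal{U}')$, which vanishes because $Hom_S(-,\mathcal{U}')$ applied to $0\rightarrow\mathcal{K}\rightarrow\mathcal{W}\rightarrow\mathcal{T}\rightarrow0$, together with $Ext_S^1(\mathcal{W},\mathcal{U}')=Ext_S^2(\mathcal{T},\mathcal{U}')=0$, yields $Ext_S^1(\mathcal{K},\mathcal{U}')=0$. This gives $\tilde w:\mathcal{K}\rightarrow\mathcal{K}'$ with $p'\tilde w=\varphi p$. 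Next I would extend $\tilde w$, viewed as a map into $\mathcal{W}'$, to $w:\mathcal{W}\rightarrow\mathcal{W}'$, the obstruction now lying in $Ext_S^1(\mathcal{T},\mathcal{W}')=0$. Since $w|_{\mathcal{K}}=\tilde w$ lands in $\mathcal{K}'\subseteq ker(\beta')$, the composite $\beta'w$ kills $\mathcal{K}$ and descends to $t:\mathcal{T}\rightarrow\mathcal{T}'$ with $t\beta=\beta'w$. Finally, $p'\tilde w\alpha=\varphi p\alpha=0$ shows $\tilde w(im(\alpha))\subseteq ker(p')=im(\alpha')$, so $w\alpha$ factors through $im(\alpha')\cong\mathcal{U}'$, yielding $u:\mathcal{U}\rightarrow\mathcal{U}'$ with $\alpha'u=w\alpha$. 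The triple $(u,w,t)$ is then a morphism of complexes whose induced map on cohomology is $[\tilde w]=\varphi$.

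The argument is mostly bookkeeping, and the point that needs care is precisely that $M'$ is not a monad: $\mathcal{W}'/\mathcal{K}'$ need not be all of $\mathcal{T}'$, so $t$ must be produced as the map \emph{induced} by $w$ on quotients, and one must avoid using any surjectivity of $\beta'$ or a ``display'' of $M'$ — on the target only $\mathcal{U}'\cong im(\alpha')$ and $\mathcal{K}'=ker(\beta')$ are needed. I would also keep explicit track of which hypotheses are used: $Hom_S(\mathcal{W},\mathcal{U}')$, $Hom_S(\mathcal{T},\mathcal{W}')$ and $Ext_S^1(\mathcal{T},\mathcal{U}')$ vanishing give injectivity, whereas $Ext_S^1(\mathcal{W},\mathcal{U}')$, $Ext_S^2(\mathcal{T},\mathcal{U}')$ (through $Ext_S^1(\mathcal{K},\mathcal{U}')=0$) and $Ext_S^1(\mathcal{T},\mathcal{W}')$ vanishing give surjectivity.
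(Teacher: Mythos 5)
Your proof is correct and follows essentially the same route as the paper's: both reduce to the short exact sequences $0\rightarrow\ker(\beta)\rightarrow\mathcal{W}\rightarrow\mathcal{T}\rightarrow0$ and $0\rightarrow\mathcal{U}'\rightarrow\ker(\beta')\rightarrow\mathcal{E}'\rightarrow0$, use the vanishing of $Hom(\mathcal{W},\mathcal{U}')$, $Ext^{1}(\mathcal{T},\mathcal{U}')$, $Ext^{1}(\mathcal{W},\mathcal{U}')$, $Ext^{2}(\mathcal{T},\mathcal{U}')$ to lift $\varphi$ uniquely to $\ker(\beta)\rightarrow\ker(\beta')$, and then use $Hom(\mathcal{T},\mathcal{W}')=Ext^{1}(\mathcal{T},\mathcal{W}')=0$ to extend uniquely to $\mathcal{W}\rightarrow\mathcal{W}'$. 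The only difference is presentational: the paper packages injectivity and surjectivity together as a chain of isomorphisms of $Hom$-spaces, while you separate the two and spell out the construction of $u$ and $t$, which is a harmless (and arguably clearer) elaboration.
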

\begin{proof}
Fix a morphism $\phi:\mathcal{E}\rightarrow\mathcal{E}^{\prime}$.
This gives a morphism $ker(\beta)\rightarrow ker(\beta^{\prime})/im(\alpha^{\prime})=\mathcal{E}^{\prime}.$
From the exact sequence 
\[
Hom(ker(\beta),\mathcal{U}^{\prime})\rightarrow Hom(ker(\beta),ker(\beta^{\prime}))\rightarrow Hom(ker(\beta),\mathcal{E}^{\prime})\rightarrow Ext^{1}(ker(b),\mathcal{U}^{\prime})
\]
one deduces $Hom(ker(\beta),ker(\beta^{\prime}))\cong Hom(ker(\beta),\mathcal{E}^{\prime})$,
since $Hom(ker(\beta),\mathcal{U}^{\prime})$ and $Ext^{1}(ker(b),\mathcal{U}^{\prime})$
sit in an exact sequence between $Hom(\mathcal{W},\mathcal{U}^{\prime})$,
$Ext^{1}(\mathcal{T},\mathcal{U}^{\prime})$ and $Ext^{1}(\mathcal{W},\mathcal{U}^{\prime})$,
$Ext^{2}(\mathcal{T},\mathcal{U}^{\prime})$ respectively. These four
spaces vanish by hypothesis. So, $\phi$ lifts uniquely to a morphism
\[
\phi_{1}\in Hom(ker(\beta),ker(\beta^{\prime}))\subseteq Hom(ker(\beta),\mathcal{W}^{\prime}).
\]
As $Hom(\mathcal{T},\mathcal{W}^{\prime})=Ext^{1}(\mathcal{T},\mathcal{W}^{\prime})=0$,
we get $Hom(ker(\beta),\mathcal{W}^{\prime})\cong Hom(\mathcal{W},\mathcal{W}^{\prime})$.
This provides indeed an inverse for $Hom(M,M^{\prime})\rightarrow Hom_{S}(\mathcal{E},\mathcal{E}^{\prime})$.\end{proof}
\begin{rem}
Let $(E,a,\varphi)$ be an $l-$framed symplectic sheaf on $\mathbb{P}^{2}$.
Write $E$ as the cohomology of a monad 
\[
\xymatrix{\mathcal{O}(-1)\otimes V\ar[r]^{\alpha} & \mathcal{O}\otimes(V^{\oplus2}\oplus W)\ar[r]^{\beta} & \mathcal{O}_{\mathbb{P}^{2}}(1)\otimes V}
\]
as in Remk. \ref{rem: how to get a fsh out of ADHM}, $\alpha=(z\cdot A+x,z\cdot B+y,z\cdot J)^{\top}$,
$\beta=(-z\cdot B-y,z\cdot A+x,z\cdot I)$ with $(A,B,I,J)\in\mathbb{M}^{s}(r,n)$.
We note that this monad satisfies the hypothesis of Prop. \ref{prop:oko},
so that we can lift $\varphi$ to a morphism of complexes
\[
\xymatrix{\mathcal{O}_{\mathbb{P}^{2}}(-1)\otimes V\ar[r]^{\alpha}\ar[d]_{G_{1}} & \mathcal{O}_{\mathbb{P}^{2}}\otimes(V\oplus V\oplus W)\ar[r]^{\beta}\ar[d]_{F} & \mathcal{O}_{\mathbb{P}^{2}}(1)\otimes V\ar[d]^{G_{2}}\\
\mathcal{O}_{\mathbb{P}^{2}}(-1)\otimes V^{\vee}\ar[r]_{\beta^{\vee}} & \mathcal{O}_{\mathbb{P}^{2}}\otimes(V^{\vee}\oplus V^{\vee}\oplus W^{\vee})\ar[r]_{\alpha^{\vee}} & \mathcal{O}_{\mathbb{P}^{2}}(1)\otimes V^{\vee}
}
\]

\end{rem}

\begin{rem}
\label{rem:JMW}It is proved in \cite{JMW} that the commutativity
of this diagram, with some additional conditions on $\varphi$ (namely,
skew-symmetry and compatibility with the framing), are equivalent
to the following set of conditions:\end{rem}
\begin{itemize}
\item $G_{2}=-G_{1}:=G$;
\item $F=\begin{pmatrix}0 & G & 0\\
-G & 0 & 0\\
0 & 0 & \Omega
\end{pmatrix};$
\item $GA-A^{\vee}G=0=GB-B^{\vee}G$;
\item $J=-\Omega^{-1}I^{\vee}G$.
\end{itemize}
This motivates the following definition.
\begin{defn}
The \emph{variety of symplectic ADHM data }of type $(r,n)$ is the
closed subvariety of the affine space 
\[
\mathbb{M}_{\Omega}(r,n)\subseteq End(V)^{\oplus2}\oplus Hom(W,V)\oplus Hom(S^{2}V,\mathbb{C})
\]
defined by the equations
\begin{itemize}
\item $GA-A^{\vee}G=0$ \emph{($GA$-symmetry)};
\item $GB-B^{\vee}G=0$ ($GB$-\emph{symmetry});
\item $[A,B]-I\Omega^{-1}I^{\vee}G=0$ \emph{(ADHM equation).}
\end{itemize}
\end{defn}
The group $GL(V)$ acts on $\mathbb{M}_{\Omega}(r,n)$ naturally:
\[
g\cdot(A,B,I,G)=(gAg^{-1},gBg^{-1},gI,g^{-\vee}Gg^{-1}).
\]
To any symplectic ADHM datum one associates a ``classic'' datum,
by defining $J=-\Omega^{-1}I^{\vee}G$:
\[
\iota:\mathbb{M}_{\Omega}(r,n)\rightarrow\mathbb{M}(r,n),\;\iota(A,B,I,G)=(A,B,I,-\Omega^{-1}I^{\vee}G).
\]
The map $\iota$ is clearly a $GL(V)-$equivariant morphism. We call
a symplectic datum stable or co-stable if its associated classic ADHM
datum is, and we denote $\mathbb{M}_{\Omega}^{s}(r,n)$, $\mathbb{M}_{\Omega}^{c}(r,n)$
and $\mathbb{M}_{\Omega}^{sc}(r,n)$ the corresponding open invariant
subsets.
\begin{lem}
\label{lem:Gnonz locf}Let $(A,B,I,G)$ be a stable symplectic datum.
If $S\subseteq V$ is an $A,B-$invariant subspace satisfying $ker(-\Omega^{-1}I^{\vee}G)\subseteq S$,
then $S\subseteq G$. In particular, $(A,B,I,G)$ is co-stable if
and only if $G$ is invertible.\end{lem}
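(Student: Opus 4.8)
The plan is to use the $GA$- and $GB$-symmetry of the datum to move $A,B$-invariant subspaces back and forth between $V$ and $V^{\vee}$, and then to play the resulting invariant subspace of $V$ against the stability of $\iota(A,B,I,G)$. Throughout I write $J=-\Omega^{-1}I^{\vee}G$, so that $\iota(A,B,I,G)=(A,B,I,J)$, and I note that since $\Omega^{-1}$ is invertible one has $\ker J=\ker(I^{\vee}G)$.

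First I would record the two elementary consequences of the symmetry relations that do all the work. If $S\subseteq V$ is $A$-invariant, then $GA=A^{\vee}G$ shows that $G(S)\subseteq V^{\vee}$ is $A^{\vee}$-invariant, and hence that its annihilator $T:=\{v\in V \mid \xi(v)=0\ \forall\,\xi\in G(S)\}$ is again $A$-invariant; likewise with $B$. Also, $GA=A^{\vee}G$ and $GB=B^{\vee}G$ show directly that $\ker G$ is $A,B$-invariant, and since $\ker G\subseteq\ker J$ this already gives one implication of the last assertion: if $G$ is not invertible then $\ker G$ is a nonzero $A,B$-invariant subspace annihilated by $J$, so $(A,B,I,G)$ is not co-stable.

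For the main claim I would take an $A,B$-invariant $S$ with $S\subseteq\ker J=\ker(I^{\vee}G)$ and deduce $S\subseteq\ker G$. The hypothesis says $I^{\vee}(G(s))=0$ for every $s\in S$, that is, every functional in $G(S)$ vanishes on $\mathrm{im}\, I$, which is precisely the statement $\mathrm{im}\, I\subseteq T$. Thus $T$ is an $A,B$-invariant subspace of $V$ containing $\mathrm{im}\, I$, so stability of $(A,B,I,J)$ forces $T=V$; but $T=V$ says every functional in $G(S)$ is zero, i.e. $G(S)=0$, which is $S\subseteq\ker G$. Feeding a nonzero $A,B$-invariant $S\subseteq\ker J$ into this shows $\ker G\neq 0$, and together with the previous paragraph — and the fact that $G\colon V\to V^{\vee}$ is a square matrix, hence injective iff bijective — this yields ``$(A,B,I,G)$ is co-stable $\iff$ $G$ is invertible''.

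I do not expect a real obstacle. The argument is a short chain of linear-algebra steps, and the only points needing a little care are the bookkeeping with annihilators and the translation between ``$J$ annihilates $S$'' and ``$G(S)$ annihilates $\mathrm{im}\, I$''. The one genuine idea is that the $G$-symmetry converts the hypothesised subspace into an $A,B$-invariant subspace of $V$ through $\mathrm{im}\, I$, which is exactly the sort of object the stability hypothesis controls.
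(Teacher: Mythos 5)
Your argument is correct and is essentially the paper's own proof: both pass to the annihilator $T$ of $G(S)$ in $V$, use the $GA$- and $GB$-symmetries to show $T$ is $A,B$-invariant, observe $\mathrm{im}\,I\subseteq T$ from $S\subseteq\ker(I^{\vee}G)$, and invoke stability to force $T=V$, hence $G(S)=0$. Your explicit two-way unpacking of the ``co-stable $\iff$ $G$ invertible'' consequence (via $\ker G$ being $A,B$-invariant and contained in $\ker J$) is a small completeness bonus over the paper, which leaves that direction implicit.
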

\begin{proof}
Let $S\subseteq ker(-\Omega^{-1}I^{\vee}G)\subseteq V$ be an $A,B-$stable
subspace. Let $s\in S$, $G(s)\in V^{\vee}$. Then $G(s)^{\perp}\supseteq Im(I)$.
Let 
\[
T=\underset{s\in S}{\bigcap}G(s)^{\perp}\subseteq V.
\]
Using the $GA-$symmetry, we prove $T$ is $A-$stable: 
\[
t\in T\implies\left\langle G(s),A(t)\right\rangle =\left\langle A^{\vee}G(s),t\right\rangle =
\]
\[
=\left\langle GA(s),t\right\rangle =0,
\]
since $A(s)\in S$. The same holds for $B$. The stability of the
datum forces $T=V$; but this means $G(S)=0$, i.e. $S\subseteq Ker(G)$. 
\end{proof}

\subsection{ADHM type description for $\mathcal{M}_{\mathbb{P}^{2},\Omega}^{l}$}

The aim of this section is to define a scheme structure on the set
of equivalence classes of symplectic ADHM configurations. We shall
also give a symplectic analogue to Theorem \ref{thm:ADHM=00003Dmoduli},
i.e. we will prove that the resulting scheme is in fact isomorphic
to the moduli space of framed symplectic sheaves on the plane. The
scheme structure is constructed by means of the following lemma.
\begin{lem}
\label{lem:Closem adhm}The restriction of the map $\iota$ to $\mathbb{M}_{\Omega}^{s}$
is a closed embedding.\end{lem}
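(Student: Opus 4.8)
The plan is to verify the three hypotheses of Lemma \ref{lem:closem crit} for the morphism $\iota|_{\mathbb{M}_\Omega^s}:\mathbb{M}_\Omega^s(r,n)\to\mathbb{M}(r,n)$. Note first that $\iota$ is a morphism of affine varieties of finite type, so the lemma applies once we check: (i) injectivity on $\mathbb{C}$-points, (ii) injectivity of the differential at every closed point, and (iii) properness. Since $\iota$ is a morphism of affine schemes, properness amounts to showing $\iota|_{\mathbb{M}_\Omega^s}$ is a \emph{closed} map with finite fibres, or more cleanly, that it is \emph{finite}; in fact I expect it to be a \emph{closed immersion} which we will get once (i) and (ii) are in hand, provided we first see that the image is closed. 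So concretely I would prove: $\iota|_{\mathbb{M}_\Omega^s}$ is injective on points, its image is a closed subset of $\mathbb{M}^s(r,n)$ (hence of $\mathbb{M}(r,n)$ intersected with the stable locus), and the induced map onto its image is an isomorphism onto a closed subscheme.

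First I would establish injectivity on $\mathbb{C}$-points. Suppose $(A,B,I,G)$ and $(A,B,I,G')$ are stable symplectic data with $-\Omega^{-1}I^\vee G=-\Omega^{-1}I^\vee G'$, i.e. $I^\vee(G-G')=0$, so $\mathrm{im}(I)\subseteq\ker((G-G')^\vee)$ — equivalently, using that $G-G'$ is symmetric, $\mathrm{im}(I)\subseteq(G-G')(V)^{\perp}$ in the appropriate pairing. The idea is to run the argument of Lemma \ref{lem:Gnonz locf} with $G-G'$ in place of $G$: both $G$ and $G'$ satisfy the $GA$- and $GB$-symmetry equations, hence so does their difference $H:=G-G'$, meaning $HA=A^\vee H$ and $HB=B^\vee H$. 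Then the subspace $T=\bigcap_{v\in V}\ker\langle Hv,-\rangle=\ker H$ — wait, more precisely set $S=V$, $T=\bigcap_{s\in V}H(s)^\perp$; the computation in Lemma \ref{lem:Gnonz locf} shows $T$ is $A$- and $B$-invariant, and it contains $\mathrm{im}(I)$. By stability of the datum (stability of $(A,B,I,-\Omega^{-1}I^\vee G)$) the only such subspace containing $\mathrm{im}(I)$ and invariant is $V$ itself, forcing $H(V)=0$, i.e. $G=G'$. This gives (i).

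Next, the differential. A tangent vector to $\mathbb{M}_\Omega$ at $(A,B,I,G)$ is a quadruple $(a,b,i,g)$ of the appropriate types satisfying the linearizations of the $GA$-symmetry, $GB$-symmetry, and ADHM equations; it maps under $d\iota$ to $(a,b,i,j)$ with $j=-\Omega^{-1}(i^\vee G+I^\vee g)$. If this image is zero then $a=b=i=0$ and $I^\vee g=0$; but $g$ is symmetric and satisfies $gA=A^\vee g$, $gB=B^\vee g$ (the linearized symmetry equations with $a=b=0$), so the identical Lemma \ref{lem:Gnonz locf}-type argument applied to $g$ — $\mathrm{im}(I)\subseteq\ker(g^\vee)$, $g$ intertwines $A,B$ — forces $g=0$ by stability. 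Hence $d\iota$ is injective at every closed point of $\mathbb{M}_\Omega^s$, giving (ii).

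For properness, the cleanest route is to exhibit $G$ as a regular function of $\iota(A,B,I,G)=(A,B,I,J)$ \emph{on the stable locus}, which simultaneously shows the image is closed and that $\iota|_{\mathbb{M}_\Omega^s}$ is a closed immersion directly, bypassing Lemma \ref{lem:closem crit} altogether — though I will keep the lemma as a fallback. The point: from $J=-\Omega^{-1}I^\vee G$ one gets $I^\vee G=-\Omega J$. Stability of $(A,B,I)$ means the smallest $A,B$-invariant subspace containing $\mathrm{im}(I)$ is all of $V$; thus the vectors $A^{k_1}B^{l_1}I w_1,\dots$ spanning $V$ let us recover, via the symmetry relations $GA=A^\vee G$, $GB=B^\vee G$, the values $\langle G(A^{k}B^{l}Iw),A^{k'}B^{l'}Iw'\rangle=\langle (A^\vee)^{?}\cdots G Iw,\cdots\rangle$ in terms of matrix entries of $A,B$ and of $I^\vee G=-\Omega J$ — i.e. $G$ is determined polynomially by $(A,B,I,J)$ on $\mathbb{M}^s$. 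This realizes $\mathbb{M}_\Omega^s$ as the closed subscheme of $\mathbb{M}^s$ cut out by the equations saying "$J=-\Omega^{-1}I^\vee G(A,B,I,J)$, $G$ symmetric, $GA$- and $GB$-symmetry hold," proving the claim. The main obstacle I anticipate is making this polynomial reconstruction of $G$ from the stable data precise and checking it is well-defined (independent of the chosen spanning monomials) and algebraic in families; if that turns out delicate, the alternative is to verify properness of $\iota|_{\mathbb{M}_\Omega^s}$ via the valuative criterion (a limit of symplectic data whose associated classic data converge has $G$-components bounded by the same reconstruction, hence convergent after the stability-based estimate) and then simply invoke Lemma \ref{lem:closem crit} with (i) and (ii) already established.
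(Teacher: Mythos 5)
Your steps (i) and (ii) are correct and coincide with the paper's: injectivity on closed points follows because $\ker(G_1-G_2)$ is $A,B$-invariant (by the $GA$-, $GB$-symmetries) and contains $\mathrm{im}(I)$ (since $(G_1-G_2)I=[I^{\vee}(G_1-G_2)]^{\vee}=0$), hence equals $V$ by stability; and the kernel of the differential is killed by running the identical argument on the symmetric tensor $X_G$, exactly as in the paper. Your detour through the orthogonal-complement formulation of Lemma \ref{lem:Gnonz locf} is equivalent to the paper's direct kernel argument.

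The properness step is where your primary route has a genuine gap. Exhibiting $G$ as a regular function $s(A,B,I,J)$ on all of $\mathbb{M}^{s}$ is not just ``delicate'': the reconstruction $\langle G\,p(A,B)Iw,\,q(A,B)Iw'\rangle=\langle\Omega w,\,J\,p^{\mathrm{rev}}(A,B)q(A,B)Iw'\rangle$ only assigns candidate values on a spanning set, and at a point of $\mathbb{M}^{s}$ \emph{not} in the image of $\iota$ these values need not respect the linear dependences among the vectors $p(A,B)Iw$; two different choices of basis then yield different candidate $G$'s, so the local definitions do not glue and no global section $s$ exists. Moreover, even granting such an $s$, a regular function extending $G$ off the image does not by itself make the image closed (compare $\mathbb{G}_m\subseteq\mathbb{A}^1$). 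Your fallback, however, is sound and is in fact the paper's route: properness via the valuative criterion over $\mathbb{C}[[t]]\supseteq\mathbb{C}((t))$. There your reconstruction identity becomes genuinely useful and gives an argument cleaner than the paper's: since the special fibre is stable, finitely many vectors $p(A_t,B_t)I_tw$ form a basis of $V\otimes\mathbb{C}[[t]]$, and the displayed identity shows all pairings $\langle G_t u,u'\rangle$ on these basis vectors lie in $\mathbb{C}[[t]]$, so $G_t$ has no pole. The paper instead argues by contradiction with the leading coefficient $G_0$ of a putative pole: the order-$(-k)$ terms of the symmetry equations and of $J_t=-\Omega^{-1}I_t^{\vee}G_t$ force a proper $A_0,B_0$-invariant subspace containing $\mathrm{im}(I_0)$, contradicting stability. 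Either version completes the proof once you invoke Lemma \ref{lem:closem crit}; I would recommend demoting the ``global regular reconstruction'' to a heuristic and writing out the valuative-criterion argument.
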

\begin{proof}
We want to apply Lemma \ref{lem:closem crit}. First we verify that
if $(A,B,I,G_{1})$ and $(A,B,I,G_{2})$ are stable symplectic data
satisfying $I^{\vee}G_{1}=I^{\vee}G_{2}$, then $G_{1}=G_{2}$. Let
$S=Ker(G_{1}-G_{2})$; it is $A,B-$stable by the $GA$ and $GB$
symmetries. Moreover, 
\[
(G_{1}-G_{2})\circ I=[I^{\vee}(G_{1}^{\vee}-G_{2}^{\vee})]^{\vee}=[I^{\vee}(G_{1}-G_{2})]^{\vee}=0,
\]
i.e. $S\supseteq Im(I)$. This forces $S=V$, and proves injectivity
at closed points.

The tangent space $T_{(A,B,I,GJ)}\mathbb{M}^{s}$ can be identified
with the vector space of quadruples 
\[
(X_{A},X_{B},X_{I},X_{J})\in End(V)^{\oplus2}\oplus Hom(W,V)\oplus Hom(V,W)
\]
satisfying the equation
\[
[A,X_{B}]+[X_{A},B]+X_{I}J-IX_{J}=0.
\]
The corresponding description for $T_{(A,B,I,G)}\mathbb{M}_{\Omega}^{s}$
is: 
\[
(X_{A},X_{B},X_{I},X_{G})\in End(V)^{\oplus2}\oplus Hom(W,V)\oplus Hom(S^{2}V,\mathbb{C})
\]
satisfying the equations
\begin{itemize}
\item $GX_{A}-X_{A}^{\vee}G+X_{G}A-A^{\vee}X_{G}=0$;
\item $GX_{B}-X_{B}^{\vee}G+X_{G}B-B^{\vee}X_{G}=0$;
\item \emph{$[A,X_{B}]+[X_{A},B]-X_{I}\Omega^{-1}IG-I\Omega^{-1}X_{I}G-I\Omega^{-1}IX_{G}=0$.}
\end{itemize}
We write the tangent map:
\[
T_{(A,B,I,G)}\mathbb{M}_{\Omega}^{s}\ni(X_{A},X_{B},X_{I},X_{G})\mapsto(X_{A},X_{B},X_{I},-\Omega^{-1}X_{I}^{\vee}G-\Omega^{-1}I^{\vee}X_{G})\in T_{(A,B,I,J)}\mathbb{M}^{s}.
\]
Suppose $(X_{A},X_{B},X_{I},X_{G})\mapsto0$. This forces $X_{G}A-A^{\vee}X_{G}$,
$X_{G}B-B^{\vee}X_{G}=0$ and $I^{\vee}X_{G}=0$, and we may conclude
$(X_{A},X_{B},X_{I},X_{G})=0$ by following the same argument we employed
to prove injectivity on closed ponts (i.e. proving thaht $ker(G)$
is an invariant subspace of $V$ containing $I(W)$).

Eventually, we need to prove properness; we will apply a valuative
criterion as stated in \cite[7.3.9]{GD}. Let $Spec(\mathbb{C}((t)))\rightarrow Spec(\mathbb{C}[[t]])$
be the inclusion of the pointed one-dimensional formal disc into the
formal disc. If for every commutative diagram
\[
\xymatrix{Spec(\mathbb{C}((t)))\ar[r]\ar[d] & \mathbb{M}_{\Omega}^{s}\ar[d]\\
Spec(\mathbb{C}[[t]])\ar[r]\ar@{-->}[ur] & \mathbb{M}^{s}
}
\]
we are able to find a lifting as above, then properness is proved.
This can be rephrased as follows. Suppose $(A_{t},B_{t},I_{t},J_{t})\in\mathbb{M}(\mathbb{C}[[t]])$
such that the pullback to $Spec(\mathbb{C})$, denoted $(A_{0},B_{0},I_{0},J_{0})$,
gives a stable point, and let $G_{t}$ be a symmetric matrix with
entries in $\mathbb{C}((t))$ for which $(A_{t},B_{t},I_{t},G_{t})\in\mathbb{M}_{\Omega}(\mathbb{C}((t)))$
and $J_{t}=-\Omega^{-1}I_{t}^{\vee}G_{t}$. We need to prove that
the entries of $G_{t}$ sit in fact in $\mathbb{C}[[t]]$. Suppose
not. It follows that $G_{t}\neq0$, and one can write $G_{t}=G_{0}t^{-k}+t^{-k+1}G_{1}+\dots$
with $G_{i}$ symmetric $\mathbb{C}-$matrices, $G_{0}\neq0$ and
$k$ a positive integer. We can change the coordinates and put $G_{0}$
in the form 
\[
G_{0}=\begin{pmatrix}1\!\!1_{m} & 0\\
0 & 0_{n-m}
\end{pmatrix},\,m\neq0.
\]
Write $A_{t}=A_{0}+tA_{1}+\dots$ The term of order $-k$ in the equation
$G_{t}A_{t}-A_{t}^{\vee}G_{t}=0$ gives $G_{0}A_{0}-A_{0}^{\vee}G_{0}=0$,
which means that the matrix $A_{0}$ is of the form
\[
A_{0}=\begin{pmatrix}\star & 0\\
\star & \star
\end{pmatrix};
\]
and the same holds for $B_{0}$. Write now $I_{t}=I_{0}+tI_{1}+\dots$,
and let 
\[
I_{0}=\begin{pmatrix}I_{0}^{1}\\
I_{0}^{2}
\end{pmatrix}
\]
 be the block decomposition coherent with our choice of coordinates.
Since $J_{t}=-\Omega^{-1}I_{t}^{\vee}G_{t}$ has entries in $\mathbb{C}[[t]]$,
we get $I_{0}^{\vee}G_{0}=0$ as above. This implies $I_{0}^{1}=0$,
and contradicts the stability of the quadruple $(A_{0},B_{0},I_{0},J_{0})$
since the space of vectors of type $\begin{pmatrix}0\\
v
\end{pmatrix}$ form an $A_{0},B_{0}$ invariant subspace containing in the image
of $I_{0}$. \end{proof}
\begin{rem}
As a consequence of this lemma, we obtain a scheme structure on the
set $\mathbb{M}_{\Omega}^{s}/GL(V)$. Indeed, we have that $\mathbb{M}_{\Omega}^{s}$
is a closed $GL(V)-$invariant subscheme of $\mathbb{M}^{s}$, and
since $\mathbb{M}^{s}/GL(V)$ exists as a geometric quotient as the
action is free and locally proper, we can conclude that the same holds
for $\mathbb{M}_{\Omega}^{s}/GL(V)$. Furthermore, the projection
$\mathbb{M}_{\Omega}^{s}\rightarrow\mathbb{M}_{\Omega}^{s}/GL(V)$
is a $GL(V)-$principal bundle (as it is the pullback of the bundle
$\mathbb{M}^{s}\rightarrow\mathbb{M}^{s}/GL(V)$ by definition).
\end{rem}
The last tool we need to estabilish the desired isomorphism of moduli
spaces is the universal monad on $\mathcal{M}_{\mathbb{P}^{2}}^{l}(r,n)$.
\begin{lem}
\label{lem:Universal monads}\cite[Sect. 7]{He} There exists a monad
\[
\xymatrix{\mathcal{U}\ar[r]^{\alpha} & \mathcal{V}\ar[r]^{\beta} & \mathcal{T}}
\]
on $\mathbb{P}^{2}\times\mathcal{M}_{\mathbb{P}^{2}}^{l}(r,n)$ which
is universal in the following sense: the cohomology of the monad and
its restriction to $l\times\mathcal{M}_{\mathbb{P}^{2}}^{l}(r,n)$
give a pair $(\mathcal{E},a)$ which is a universal framed sheaf for
the moduli space. Let $V,$ $W$ be as in Def. \ref{def:ADHM classic}.
There exists an open affine cover $\{U_{i}\}$ of $\mathcal{M}_{\mathbb{P}^{2}}^{l}(r,n)$
satisfying the following properties:
\begin{itemize}
\item the principal $GL(n)$ bundle $\mathbb{M}^{s}(r,n)\rightarrow\mathcal{M}_{\mathbb{P}^{2}}^{l}(r,n)$
is trivial on $U_{i}$;
\item the restriction of the monad to $\mathbb{P}^{2}\times U_{i}$ looks
like
\[
\xymatrix{\mathcal{O}(-1)\boxtimes\mathcal{O}_{U_{i}}\otimes V\ar[r]^{\alpha_{i}} & \mathcal{O}\boxtimes\mathcal{O}_{U_{i}}\otimes(V^{\oplus2}\oplus W)\ar[r]^{\beta_{i}} & \mathcal{O}_{\mathbb{P}^{2}}(1)\boxtimes\mathcal{O}_{U_{i}}\otimes V}
\]
and one can choose $\alpha_{i}$ and $\beta_{i}$ as follows: let
$\sigma_{i}:U_{i}\rightarrow\mathbb{M}^{s}$ be a section, and define
\[
\alpha_{i}=\alpha\circ\sigma_{i},\,\beta_{i}=\beta\circ\sigma_{i},
\]
see Remk \ref{rem: how to get a fsh out of ADHM}. 
\end{itemize}
\end{lem}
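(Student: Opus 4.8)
The plan is to construct the universal monad by descending the tautological ADHM monad along the quotient map, rather than by an intrinsic cohomological (relative Beilinson) construction over the moduli space, since the former is more direct and is essentially the route of \cite{He}. Recall from Theorem~\ref{thm:ADHM=00003Dmoduli} that $\mathcal{M}:=\mathcal{M}_{\mathbb{P}^{2}}^{l}(r,n)$ is the geometric quotient $\mathbb{M}^{s}(r,n)/GL(V)$ for a free, locally proper action, so $\pi:\mathbb{M}^{s}(r,n)\to\mathcal{M}$ is a principal $GL(V)$-bundle. On $\mathbb{M}^{s}(r,n)$ one has the tautological quadruple $(A,B,I,J)$ given by the four coordinate projections restricted to the parameter scheme, and feeding it into the construction of Remark~\ref{rem: how to get a fsh out of ADHM} \emph{relatively} over $\mathbb{M}^{s}(r,n)$ produces a complex of trivial bundles on $\mathbb{P}^{2}\times\mathbb{M}^{s}(r,n)$
\[
\mathcal{O}_{\mathbb{P}^{2}}(-1)\boxtimes\mathcal{O}\otimes V\;\xrightarrow{\;\alpha\;}\;\mathcal{O}_{\mathbb{P}^{2}}\boxtimes\mathcal{O}\otimes(V^{\oplus2}\oplus W)\;\xrightarrow{\;\beta\;}\;\mathcal{O}_{\mathbb{P}^{2}}(1)\boxtimes\mathcal{O}\otimes V,
\]
with $\alpha,\beta$ given fibrewise by the formulas of Remark~\ref{rem: how to get a fsh out of ADHM}. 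By the characterisation there, $\alpha$ is a fibrewise injective bundle map and $\beta$ is fibrewise surjective on the stable locus, so this is an honest monad, and its cohomology together with its restriction to $l\times\mathbb{M}^{s}(r,n)$ is precisely the pullback $\pi^{\star}$ of a representative of the universal framed sheaf.

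First I would equip the three terms with their natural $GL(V)$-linearisations — the standard representation on the $V$-factors, the trivial one on the $W$-factor and on $\mathcal{O}_{\mathbb{P}^{2}}(\pm1)$ — and check that $\alpha$ and $\beta$ are equivariant for the action $g\cdot(A,B,I,J)=(gAg^{-1},gBg^{-1},gI,Jg^{-1})$; this is a short block computation with the entries $z\cdot A+x$, $z\cdot B+y$, $z\cdot I$, $z\cdot J$. Since $\pi$ is a principal bundle, descent along $\pi$ is effective, so the equivariant monad descends to a complex of locally free sheaves $\mathcal{U}\to\mathcal{W}\to\mathcal{T}$ on $\mathbb{P}^{2}\times\mathcal{M}$; its cohomology sheaf and its restriction to $l\times\mathcal{M}$ recover $(\mathcal{E},a)$, because both operations commute with the faithfully flat base change $\pi$ and give back the universal framed sheaf. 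Universality in the stated sense then follows: an $S$-family is classified by a map $S\to\mathcal{M}$, and pulling back the descended monad reproduces the family's own monad by the same base-change compatibility. For the affine cover, I would use that a $GL_{n}$-bundle over a quasi-projective scheme is Zariski-locally trivial (special group), so $\mathcal{M}$ has an open cover on which $\pi$ is trivial, which one refines to an affine cover $\{U_{i}\}$; picking a section $\sigma_{i}:U_{i}\to\mathbb{M}^{s}(r,n)$ and restricting the descended monad over $\mathbb{P}^{2}\times U_{i}$ gives tautologically the $\sigma_{i}$-pullback of the equivariant monad on $\mathbb{P}^{2}\times\mathbb{M}^{s}(r,n)$, namely the complex with $\alpha_{i}=\alpha\circ\sigma_{i}$ and $\beta_{i}=\beta\circ\sigma_{i}$.

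The main obstacle is the descent bookkeeping rather than any deep geometry: one must pin down the $GL(V)$-linearisations so that $\alpha$ and $\beta$ are strictly equivariant, and invoke the correct form of effective descent for principal bundles, and one must justify that $\pi$ is Zariski-locally trivial with affine trivialising opens admitting sections. Both are standard — the first a direct matrix check, the second the special-group property of $GL_{n}$ together with the quasi-projectivity of $\mathcal{M}$ from Theorem~\ref{thm:ADHM=00003Dmoduli} — but they are the points that need care. Everything else, in particular the fibrewise monad properties and the compatibility of the construction with base change, is immediate from the single-sheaf ADHM description recalled in Remark~\ref{rem: how to get a fsh out of ADHM}.
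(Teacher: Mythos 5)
The paper does not actually prove this lemma---it is imported verbatim from \cite[Sect. 7]{He}---and your construction, descending the tautological equivariant monad along the principal $GL(V)$-bundle $\mathbb{M}^{s}(r,n)\rightarrow\mathcal{M}_{\mathbb{P}^{2}}^{l}(r,n)$ and using that $GL_{n}$ is special to obtain Zariski-local affine trivializations with sections $\sigma_{i}$, is precisely the standard construction underlying the cited result, so your approach matches. The argument is correct; the only point deserving an explicit word is that identifying the descended family with \emph{the} universal framed sheaf (rather than with its twist by a line bundle pulled back from the base) uses the rigidity of framed sheaves from Prop.~\ref{prop:Bound+uniq}.
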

We are finally ready to state and prove the main theorem of this section.
\begin{thm}
There exists an isomorphism of schemes $\mathcal{M}_{\mathbb{P}^{2},\Omega}^{l}\cong\mathbb{M}_{\Omega}^{s}/GL(V)$
which maps $\mathcal{M}_{\mathbb{P}^{2},\Omega}^{l,reg}$ to $\mathbb{M}_{\Omega}^{sc}/GL(V)=\{det(G)\neq0\}/GL(V).$ \end{thm}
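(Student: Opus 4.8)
The plan is to construct mutually inverse morphisms $\Psi:\mathbb{M}_{\Omega}^{s}/GL(V)\to\mathcal{M}_{\mathbb{P}^{2},\Omega}^{l}$ and $\Phi:\mathcal{M}_{\mathbb{P}^{2},\Omega}^{l}\to\mathbb{M}_{\Omega}^{s}/GL(V)$, using fineness of the symplectic moduli functor to produce $\Psi$ and the universal monad of Lemma \ref{lem:Universal monads} to produce $\Phi$, and then to read off the statement about the regular loci from the classical ADHM dictionary (Theorem \ref{thm:ADHM=00003Dmoduli}) via Remark \ref{rem:JMW}. Recall that $\mathbb{M}_{\Omega}^{s}/GL(V)$ is a scheme by the discussion following Lemma \ref{lem:Closem adhm}.

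\emph{Construction of $\Psi$.} On $\mathbb{P}^{2}\times\mathbb{M}_{\Omega}^{s}$ form the tautological complex $M$ whose maps $\alpha,\beta$ are built from the universal matrices $(A,B,I)$ and from $J:=-\Omega^{-1}I^{\vee}G$ exactly as in Remark \ref{rem: how to get a fsh out of ADHM}; since $\alpha$ is injective and, by stability, $\beta$ is surjective, $M$ is a monad, and its cohomology $\mathcal{E}$ together with the trivialization $a$ induced on $l$ is an $\mathbb{M}_{\Omega}^{s}$-family of framed sheaves with $c_{1}=0$. By Remark \ref{rem:JMW}, the equations defining $\mathbb{M}_{\Omega}$ translate precisely into the statement that, with $G_{1}=-G$, $G_{2}=G$ and $F$ the block matrix displayed there, $(G_{1},F,G_{2})$ is a morphism of complexes $M\to M^{\vee}$ inducing a skew-symmetric $\varphi:\Lambda^{2}\mathcal{E}\to\mathcal{O}$ which restricts to $\Omega\circ\Lambda^{2}a$ on $l$ (where the monad computes $\mathcal{O}_{l}\otimes W$ through the $W$-slot). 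Thus $(\mathcal{E},a,\varphi)$ is an $\mathbb{M}_{\Omega}^{s}$-family of framed symplectic sheaves; being $GL(V)$-linearized it descends along the principal bundle $\mathbb{M}_{\Omega}^{s}\to\mathbb{M}_{\Omega}^{s}/GL(V)$, and fineness of $\mathfrak{M}_{\mathbb{P}^{2},\Omega}^{l}$ yields $\Psi$.

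\emph{Construction of $\Phi$.} Over the affine cover $\{U_{i}\}$ of Lemma \ref{lem:Universal monads}, with sections $\sigma_{i}$, the universal monad restricts to $\alpha_{i}=\alpha\circ\sigma_{i}$, $\beta_{i}=\beta\circ\sigma_{i}$, yielding $(A_{i},B_{i},I_{i},J_{i}):U_{i}\to\mathbb{M}^{s}$. Set $U_{i}^{\Omega}:=U_{i}\cap\mathcal{M}_{\mathbb{P}^{2},\Omega}^{l}$; over $U_{i}^{\Omega}$ the universal symplectic form $\varphi$, regarded as $\mathcal{E}_{i}\to\mathcal{E}_{i}^{\vee}$, lifts by Proposition \ref{prop:oko} — whose hypotheses amount to Bott vanishings for $\mathcal{O}(-1),\mathcal{O},\mathcal{O}(1)$ on $\mathbb{P}^{2}$, which survive base change to $U_{i}^{\Omega}$ — to a \emph{unique} morphism of complexes $M_{i}\to M_{i}^{\vee}$. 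As it lifts a skew form compatible with the framing, Remark \ref{rem:JMW} forces this morphism to have the stated shape, hence determines a unique symmetric $G_{i}$ with $G_{i}A_{i}-A_{i}^{\vee}G_{i}=0=G_{i}B_{i}-B_{i}^{\vee}G_{i}$ and $J_{i}=-\Omega^{-1}I_{i}^{\vee}G_{i}$; combined with the ADHM equation for $(A_{i},B_{i},I_{i},J_{i})$ this gives $[A_{i},B_{i}]-I_{i}\Omega^{-1}I_{i}^{\vee}G_{i}=0$, so $(A_{i},B_{i},I_{i},G_{i}):U_{i}^{\Omega}\to\mathbb{M}_{\Omega}^{s}$. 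On overlaps the $G_{i}$ differ by the $GL(V)$-valued transition cocycle of $\mathbb{M}^{s}\to\mathcal{M}_{\mathbb{P}^{2}}^{l}$, so composing with $\mathbb{M}_{\Omega}^{s}\to\mathbb{M}_{\Omega}^{s}/GL(V)$ they glue to a morphism $\Phi$.

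\emph{Inverseness and the regular loci.} That $\Psi$ and $\Phi$ are mutually inverse is now formal: on the underlying framed sheaves and classical data they reduce to the classical isomorphism of Theorem \ref{thm:ADHM=00003Dmoduli}, and the uniqueness clause in Proposition \ref{prop:oko} guarantees that lifting $\varphi$ back to a morphism of complexes returns exactly the $(G_{1},F,G_{2})$ used to build it, so the datum $G$ is recovered. For the last assertion, a class $[(E,a,\varphi)]$ lies in $\mathcal{M}_{\mathbb{P}^{2},\Omega}^{l,reg}$ iff $E$ is locally free, iff its associated classical datum $\iota(A,B,I,G)$ is co-stable by Theorem \ref{thm:ADHM=00003Dmoduli}, iff $\det(G)\neq0$ by Lemma \ref{lem:Gnonz locf}; this locus is by definition $\mathbb{M}_{\Omega}^{sc}/GL(V)$, which gives both the displayed equality and the fact that $\Phi$ restricts to an isomorphism there. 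The main obstacle is performing the lift of $\varphi$ in families rather than fibrewise — that is, checking that Proposition \ref{prop:oko} genuinely applies over the possibly non-reduced base $U_{i}^{\Omega}$, so that the $G_{i}$ it outputs is a morphism of schemes satisfying the $\mathbb{M}_{\Omega}$-equations scheme-theoretically; everything else is bookkeeping around the classical ADHM correspondence and the translation of Remark \ref{rem:JMW}.
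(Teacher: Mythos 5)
Your proposal is correct and follows essentially the same route as the paper: both directions are built from the universal monad of Lemma \ref{lem:Universal monads}, the lifting of $\varphi$ to a morphism of complexes via Proposition \ref{prop:oko}, the dictionary of Remark \ref{rem:JMW} to extract or encode $G$, and Lemma \ref{lem:Gnonz locf} for the identification of the regular locus. The only cosmetic differences are the order in which the two maps are constructed and your explicit remarks on inverseness and on base change of the vanishings in Proposition \ref{prop:oko}, which the paper leaves implicit.
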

\begin{proof}
Let $(\mathcal{E}_{S},\chi_{S},\varphi_{S})$ be an $S-$point of
$\mathcal{M}_{\mathbb{P}^{2},\Omega}^{l}$ for $S$ a given scheme,
i.e. an $S-$flat family of framed symplectic sheaves on $\mathbb{P}^{2}$.
The pair $(\mathcal{E}_{S},\chi_{S})$ induces a morphism $S\rightarrow\mathbb{M}^{s}/GL(V)\cong\mathcal{M}_{\mathbb{P}^{2}}^{l}$.
Let 
\[
M:\;\mathcal{U}\rightarrow\mathcal{W}\rightarrow\mathcal{T}
\]
be a universal monad on $\mathcal{M}_{\mathbb{P}^{2}}^{l}\times\mathbb{P}^{2}$;
if we call $M_{S}$ its pullback to $S\times\mathbb{P}^{2},$ we can
write by definition $\mathcal{E}_{S}=H^{1}(M_{S})$. Choose an affine
open cover $\{S_{i}\}$ of $S$ so that the monad $M_{S_{i}}$ is
of the form
\[
(\mathcal{O}_{\mathbb{P}^{2}}(-1)\boxtimes\mathcal{O}_{S_{i}})\otimes V\rightarrow(\mathcal{O}_{\mathbb{P}^{2}}\boxtimes\mathcal{O}_{S_{i}})\otimes(V^{\oplus2}\oplus W)\rightarrow(\mathcal{O}_{\mathbb{P}^{2}}(1)\boxtimes\mathcal{O}_{S_{i}})\otimes V.
\]
(it is enough to choose an affine refinement of the open cover given
by the preimage of a suitable open cover $\{U_{i}\}$, see Lemma \ref{lem:Universal monads}).
Consider now $\varphi_{S_{i}}:H^{1}(M_{S_{i}})\cong\mathcal{E}_{S_{i}}\rightarrow\mathcal{E}_{S_{i}}^{\vee}\cong H^{1}(M_{S_{i}}^{\vee}).$
Since the pair $(M_{S_{i}},M_{S_{i}}^{\vee})$ satisfies the hypothesis
of Prop. \ref{prop:oko}, we can lift uniquely $\varphi_{S_{i}}$
to a morphism of complexes 
\[
M_{S_{i}}\rightarrow M_{S_{i}}^{\vee},
\]
and if we apply the constraint on the framing and skew-symmetry of
$\varphi_{S}$ as in \ref{rem:JMW}, we get a morphism $f_{i}:S_{i}\rightarrow\mathbb{M}_{\Omega}^{s}$.
On the overlaps $S_{ij}$ we have $f_{i}\sim f_{j}$ under the natural
action of $GL(V)$; therefore, we obtain a map $S\rightarrow\mathbb{M}_{\Omega}^{s}/GL(V)$.
This defines a map $\mathcal{M}_{\mathbb{P}^{2},\Omega}^{l}\rightarrow\mathbb{M}_{\Omega}^{s}/GL(V)$.

We explain how to construct an inverse for this map. Let $(\underline{\mathcal{E}},\underline{\alpha})$
be a universal framed sheaf on $\mathcal{M}_{\mathbb{P}^{2},\Omega}^{l}\times\mathbb{P}^{2}$,
and call $(\mathcal{E},\alpha)$ its restriction to the closed subscheme
$\mathbb{M}_{\Omega}^{s}/GL(V)$. We want to construct a symplectic
form $\Phi$ on $\mathcal{E}$. Choose an open cover of $\mathbb{M}^{s}/GL(V)$
as in Lemma \ref{lem:Universal monads}, and define $\{U_{i}\}$ to
be the induced open cover of the closed subscheme $\mathbb{M}_{\Omega}^{s}/GL(V)$.
The following conditions hold simultaneously:
\begin{enumerate}
\item the pullback of the universal monad to $U_{i}\times\mathbb{P}^{2}$
is of the form
\[
(\mathcal{O}_{\mathbb{P}^{2}}(-1)\boxtimes\mathcal{O}_{U_{i}})\otimes V\rightarrow(\mathcal{O}_{\mathbb{P}^{2}}\boxtimes\mathcal{O}_{U_{i}})\otimes(V^{\oplus2}\oplus W)\rightarrow(\mathcal{O}_{\mathbb{P}^{2}}(1)\boxtimes\mathcal{O}_{U_{i}})\otimes V;
\]

\item the principal bundle $\mathbb{M}_{\Omega}^{s}\rightarrow\mathbb{M}_{\Omega}^{s}/GL(V)$
is trivialized on $U_{i}$. 
\end{enumerate}
We fix sections $\sigma_{i}:U_{i}\rightarrow\mathbb{M}_{\Omega}^{s}$.
Write $\sigma_{i}=(A_{i},B_{i},I_{i},G_{i})$. We obtain a morphism
$G_{i}:U_{i}\rightarrow Hom_{\mathbb{C}}(V,V^{\vee})$, which we use
to define a diagram 
\[
\xymatrix{(\mathcal{O}_{\mathbb{P}^{2}}(-1)\boxtimes\mathcal{O}_{U_{i}})\otimes V\ar[r]\ar[d]_{-G_{i}} & (\mathcal{O}_{\mathbb{P}^{2}}\boxtimes\mathcal{O}_{U_{i}})\otimes(V^{\oplus2}\oplus W)\ar[d]_{\Omega_{i}}\ar[r] & (\mathcal{O}_{\mathbb{P}^{2}}(1)\boxtimes\mathcal{O}_{U_{i}})\otimes V\ar[d]_{G_{i}}\\
(\mathcal{O}_{\mathbb{P}^{2}}(-1)\boxtimes\mathcal{O}_{U_{i}})\otimes V^{\vee}\ar[r] & (\mathcal{O}_{\mathbb{P}^{2}}\boxtimes\mathcal{O}_{U_{i}})\otimes(V^{^{\vee}\oplus2}\oplus W^{\vee})\ar[r] & (\mathcal{O}_{\mathbb{P}^{2}}(1)\boxtimes\mathcal{O}_{U_{i}})\otimes V^{\vee}
}
\]
where 
\[
\Omega_{i}=\begin{pmatrix}0 & G_{i} & 0\\
-G_{i} & 0 & 0\\
0 & 0 & \Omega
\end{pmatrix}.
\]
This yields a morphism of complexes by construction and induces a
collection of morphisms $\varphi_{i}:\mathcal{E}\mid_{U_{i}}\rightarrow\mathcal{E}\mid_{U_{i}}$
which are $GL(V)(S_{i})$-equivalent on the overlaps. We defined a
$\mathbb{M}_{\Omega}^{s}/GL(V)-$family of framed symplectic sheaves
on $\mathbb{P}^{2}$, i.e. a morphism $\mathbb{M}_{\Omega}^{s}/GL(V)\rightarrow\mathcal{M}_{\mathbb{P}^{2},\Omega}^{l}$.
The statement about the locally free locus is a direct consequence
of Lemma \ref{lem:Gnonz locf}. 
\end{proof}

\section{\label{sec:irred}irreducibility of $\mathcal{M}_{\mathbb{P}^{2},\Omega}^{l}$}

The aim of the section is to prove the irreducibility of the moduli
space of framed symplectic sheaves $\mathcal{M}_{\mathbb{P}^{2},\Omega}^{l}(r,n)=:\mathcal{M}_{\Omega}(r,n)$.
We will make use of its description as the orbit space of the action
of $GL(V)$ on the space of stable symplectic ADHM configurations
$\mathbb{M}_{\Omega}^{s}(r,n)$, as explained in the previous section.
We fix Darboux coordinates on the symplectic vector space $(W,\Omega)$
so that $-\Omega^{-1}=\Omega$, $W\cong\mathbb{C}^{r}$. We recall
that the space of quadruples $(A,B,I,G)$ whose orbits correspond
to symplectic bundles are the ones belonging to the open invariant
subset $\{rk(G)=n\}$.
\begin{rem}
The double dual of a symplectic sheaf is a symplectic bundle with
$c_{2}=rk(G)$. We will show how to extract an ADHM datum for $E^{\vee\vee}$
from $E=[A,B,I,G]$. Fix coordinates so that $G=\begin{pmatrix}1\!\!1_{rk(G)} & 0\\
0 & 0_{n-rk(G)}
\end{pmatrix}$. The $GA$, $GB$ symmetries imply that one has 
\[
A=\begin{pmatrix}A^{\prime} & 0\\
a & \alpha
\end{pmatrix},\,B=\begin{pmatrix}B^{\prime} & 0\\
b & \beta
\end{pmatrix}
\]
with $A^{\prime}$ and $B^{\prime}$symmetric. Write $I=\begin{pmatrix}I^{\prime}\\
X
\end{pmatrix}$ according to the above decomposition. We obtain the symplectic ADHM
quadruple $(A^{\prime},B^{\prime},I^{\prime},1\!\!1_{rk(G)})$ which
is a representative for the point $[E^{\vee\vee}]\in\mathcal{M}_{\Omega}^{reg}(r,rk(G)).$
\end{rem}

\subsection*{Strategy of the proof }

The locally free locus $\mathcal{M}_{\Omega}^{reg}(r,n)$ is smooth
of dimension $rn+2n$ and connected, see for example \cite{BFG};
it is then an irreducible variety. The idea is to prove that the closure
of this open subset coincides with the entire $\mathcal{M}_{\Omega}$.
For any given $(A,B,I,G)$, we shall provide a rather explicit construction
of a rational curve in the moduli space passing through $[A,B,I,G]$
and whose general point lies in $\mathcal{M}_{\Omega}^{reg}$. We
shall start studying the cases $G=0$ and $rk(G)=n-1$, as in the
proof for the general case we will use a blend of the techniques for
these two extremal cases.

\subsection{The case $G=0$}

The techniques for this set up are largely inspired from \cite[Appendix A]{Ba1}.
\begin{defn}
A matrix $A\in M_{n}(\mathbb{C})$ is said to be cyclic or nonderogatory
if its minimal polynomial is equal to the characteristic polynomial;
equivalently, there exists a vector $v\in\mathbb{C}^{n}$ such that
$v,Av,\dots A^{n-1}v$ span $\mathbb{C}^{n}$. Clearly, the transpose
of a nonderogatory matrix is again nonderogatory. Further characterizations
of these matrices: \end{defn}
\begin{itemize}
\item $A$ is nonderogatory if and only if any matrix commuting with $A$
is a polynomial in $A$, i.e.
\[
[A,B]=0\implies\exists P\in\mathbb{C}[t]\,\mid\,B=P(A).
\]

\item $A$ is nonderogatory if and only if all of its eigenvalues have geometric
multiplicity equal to $1$ (ex: matrices with no repeated eigenvalues,
Jordan blocks).
\end{itemize}
The space of nonderogatory matrices is a nonempty open subset of $M_{n}$.

We quote now the main result in \cite{TZ}:
\begin{thm}
For any fixed matrix $A$ there exists a nonsingular symmetric matrix
$g$ such that $gAg^{-1}=A^{\top}$. Any matrix $g$ tranforming $A$
into its transpose is symmetric if and only if $A$ is nonderogatory.
\end{thm}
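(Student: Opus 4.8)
The plan is to reduce $A$ to a canonical form, write down an explicit \emph{symmetric} symmetrizer one block at a time, and then play such a symmetrizer off against the centralizer algebra $C(A)=\{h\mid hA=Ah\}$. For the existence statement, I would write $A=PNP^{-1}$ with $N$ in Jordan normal form (over a general field one replaces this by the rational canonical form). For a single Jordan block $J$ of size $k$, the exchange matrix $E_{k}$ (ones along the anti-diagonal) is symmetric, equal to its own inverse, and a direct index computation gives $E_{k}JE_{k}=J^{\top}$; for a companion block one uses instead the classical Hankel symmetrizer assembled from the coefficients, which has the same formal properties and determinant $\pm1$. Letting $s$ be the block-diagonal matrix built from these blockwise symmetrizers, $s$ is symmetric and invertible with $sNs^{-1}=N^{\top}$, whence $g_{0}:=(P^{-1})^{\top}s\,P^{-1}$ is symmetric, invertible, and satisfies $g_{0}Ag_{0}^{-1}=A^{\top}$ by a one-line computation. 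This disposes of the first assertion.

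For the \emph{if} part of the second assertion, suppose $A$ is nonderogatory and $gAg^{-1}=A^{\top}$. Then $g_{0}^{-1}g$ commutes with $A$, so $g_{0}^{-1}g=P(A)$ for some $P\in\mathbb{C}[t]$ by the characterization of nonderogatory matrices recalled above. Writing $g=g_{0}P(A)$ and using that $g_{0}$ is symmetric with $g_{0}Ag_{0}^{-1}=A^{\top}$, one gets $g^{\top}=P(A)^{\top}g_{0}=P(A^{\top})g_{0}=P(g_{0}Ag_{0}^{-1})g_{0}=g_{0}P(A)=g$, so $g$ is symmetric, as required.

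For the \emph{only if} part, suppose $A$ is derogatory. The matrices $g$ with $gA=A^{\top}g$ are exactly those of the form $g_{0}h$ with $h\in C(A)$, so they fill a linear subspace of matrix space, which is irreducible as a variety; the invertible ones among them are precisely the $g$ with $gAg^{-1}=A^{\top}$. Now $g_{0}h$ is symmetric if and only if $\tau(h)=h$, where $\tau(h):=g_{0}^{-1}h^{\top}g_{0}$ defines a linear involution of $C(A)$; since $\tau$ reverses products, the identity $\tau=\mathrm{id}_{C(A)}$ would force $C(A)$ to be commutative, contradicting the fact that a derogatory matrix has a noncommutative centralizer. Hence the $(-1)$-eigenspace of $\tau$ is nonzero, so the non-symmetric elements of $g_{0}C(A)$ form a nonempty Zariski-open subset, while the invertible elements form another nonempty Zariski-open subset (it contains $g_{0}$). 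In the irreducible variety $g_{0}C(A)$ these two open subsets meet, and any point of the intersection is an invertible non-symmetric $g$ with $gAg^{-1}=A^{\top}$, which is what we want.

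The only steps with real content are the explicit blockwise symmetrizer together with the verification that it is invertible (elementary, but an index chase), and the fact that a derogatory matrix has a noncommutative centralizer. For the latter I would either invoke it as one of the standard equivalent characterizations of nonderogatory matrices, or prove it directly: if some generalized eigenspace of $A$ contains at least two Jordan blocks, the two operators shifting one block into the other both lie in $C(A)$, and their products in the two orders have different images, hence do not commute. I expect this to be the only genuine obstacle; everything else is formal manipulation.
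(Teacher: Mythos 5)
Your proof is correct, but note that the paper itself offers no argument for this statement: it is quoted verbatim as the main result of Taussky--Zassenhaus \cite{TZ}, so there is no internal proof to compare against. Your argument is a complete and self-contained one. The existence part (Jordan form plus the anti-diagonal exchange matrix $E_k$ on each block, transported by $g_0=(P^{-1})^\top s P^{-1}$) and the ``if'' direction (writing $g=g_0P(A)$ via the centralizer of a nonderogatory matrix) are the standard modern route and check out line by line. The ``only if'' direction is the only place requiring care, and your treatment is sound: the solution set of $gA=A^\top g$ is the coset $g_0C(A)$, the involution $\tau(h)=g_0^{-1}h^\top g_0$ of $C(A)$ is product-reversing, so $\tau=\mathrm{id}$ would force $C(A)$ commutative, which fails for derogatory $A$; then the non-symmetric locus and the invertible locus are two nonempty Zariski-opens in the irreducible linear variety $g_0C(A)$ and must meet. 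The one step you should not leave implicit is that the two intertwiners $u,v$ between Jordan blocks of a common eigenvalue can be chosen with $uv\neq 0$ (e.g.\ take $v$ surjective and $u$ injective, so $uv$ has nonzero image inside the first block while $vu$ lands in the second), since $uv=vu=0$ would not yield noncommutativity; with that choice the images lie in complementary summands and cannot coincide. This is a mild sharpening, not a gap. Compared with the original Taussky--Zassenhaus argument, which works over an arbitrary field via the rational canonical form and cyclic vectors, your version specializes to $\mathbb{C}$ (entirely adequate here, since the paper has fixed $\mathbb{K}=\mathbb{C}$ by this point) but gains an explicit symmetrizer and a cleaner dichotomy through the centralizer involution.
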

A consequence of this is that any complex matrix $A$ is similar to
a symmetric one. Let $gAg^{-1}=A^{\top}$ with $g$ symmetric. Write
$g=s\cdot s^{\top}$ with $s$ nonsingular. This gives 
\[
s^{\top}As^{-\top}=s^{-1}A^{\top}s=(s^{\top}As^{-\top})^{\top}.
\]

As an immediate corollary one gets:
\begin{cor}
\label{cor:Tz}Suppose $(A,B)$ is a pair of commuting matrices, and
suppose $A$ is nonderogatory. There exists a nonsingular matrix $g$
such that $gAg^{-1}$and $gBg^{-1}$ are symmetric.
\end{cor}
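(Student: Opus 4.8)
The plan is to combine the theorem of \cite{TZ} quoted above with the listed characterization of nonderogatory matrices, namely that every matrix commuting with $A$ — in particular $B$ — is a polynomial in $A$.

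First I would apply \cite{TZ} to the nonderogatory matrix $A$: there is a nonsingular \emph{symmetric} matrix $g_0$ with $g_0 A g_0^{-1}=A^{\top}$. Over $\mathbb{C}$ a nonsingular symmetric matrix is congruent to the identity, so one can write $g_0=s\cdot s^{\top}$ with $s$ nonsingular — this is precisely the factorization recorded in the paragraph preceding the corollary. The computation displayed there then shows that $C:=s^{\top}A\,s^{-\top}$ is symmetric; equivalently, putting $g:=s^{\top}$, the conjugate $gAg^{-1}=C$ is symmetric.

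It remains to check that the \emph{same} $g$ symmetrizes $B$. Since $[A,B]=0$ and $A$ is nonderogatory, one of the listed characterizations yields a polynomial $P\in\mathbb{C}[t]$ with $B=P(A)$. Conjugation by $g$ is a ring homomorphism, so $gBg^{-1}=P(gAg^{-1})=P(C)$; and since transposition is an anti-automorphism of the matrix algebra fixing scalars and fixing $C$, it fixes every polynomial in $C$, whence $P(C)^{\top}=P(C^{\top})=P(C)$. Thus $gBg^{-1}$ is symmetric as well, and $g$ is the desired matrix.

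There is no real obstacle here: the corollary is a short consequence of the cited theorem together with the nonderogatory--polynomial dictionary. The only points requiring a moment's care are that it is $s^{\top}$ (not $s$ itself) that effects the symmetrization of $A$, and that one must pass through $B=P(A)$ rather than attempting to symmetrize $B$ directly — without the commutation hypothesis and the nonderogatory hypothesis on $A$ the statement fails.
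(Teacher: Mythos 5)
Your argument is correct and follows exactly the route the paper intends: the Taussky--Zassenhaus theorem gives a symmetric $g_0$ with $g_0Ag_0^{-1}=A^{\top}$, the factorization $g_0=ss^{\top}$ symmetrizes $A$ via conjugation by $s^{\top}$, and the nonderogatory hypothesis reduces $B$ to a polynomial in $A$, hence in the symmetric conjugate. The paper states the corollary as ``immediate'' from the preceding display without writing this out, and your proposal supplies precisely the missing details.
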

We need another useful result:
\begin{prop}
Let $B$ be any matrix. There exists a nonderogatory matrix $N$ commuting
with $B$.\end{prop}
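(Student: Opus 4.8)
The plan is to reduce to Jordan canonical form and then perturb the eigenvalues while leaving the nilpotent parts untouched. First I would choose $P\in GL_n(\mathbb{C})$ with
\[
P^{-1}BP=\bigoplus_{i=1}^{k}J_{m_i}(\lambda_i),
\]
where $J_m(\lambda)=\lambda I_m+N_m$ denotes the $m\times m$ Jordan block and $N_m$ the standard nilpotent Jordan block; here the $\lambda_i$ are the eigenvalues of $B$, \emph{not} necessarily distinct. Then I would pick pairwise distinct scalars $\mu_1,\dots,\mu_k\in\mathbb{C}$ and set
\[
N:=P\Big(\bigoplus_{i=1}^{k}J_{m_i}(\mu_i)\Big)P^{-1}.
\]

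Second, I would check that $N$ commutes with $B$. Conjugating by $P$, this amounts to showing that $\bigoplus_i J_{m_i}(\mu_i)$ and $\bigoplus_i J_{m_i}(\lambda_i)$ commute; since both are block diagonal with blocks of the same sizes $m_i$, it suffices to treat one block at a time, and there $J_{m_i}(\mu_i)=\mu_i I_{m_i}+N_{m_i}$ and $J_{m_i}(\lambda_i)=\lambda_i I_{m_i}+N_{m_i}$ are both polynomials in the single matrix $N_{m_i}$, hence commute.

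Third, I would check that $N$ is nonderogatory. Since $N$ is similar to $\bigoplus_i J_{m_i}(\mu_i)$, which is already in Jordan form, and the $\mu_i$ are pairwise distinct, each eigenvalue $\mu_i$ of $N$ lies in exactly one Jordan block, so every eigenvalue of $N$ has geometric multiplicity $1$; equivalently, the minimal polynomial $\prod_{i=1}^{k}(x-\mu_i)^{m_i}$ has degree $n$ and therefore equals the characteristic polynomial. Either way $N$ is nonderogatory by the characterizations recorded above.

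There is no genuinely hard step; the only point worth isolating is the second one, namely that commutativity with $B$ depends only on the \emph{nilpotent parts} of its Jordan blocks, so the eigenvalues of $N$ may be chosen freely — in particular generic enough to force $N$ to be nonderogatory. Passing through the Jordan form is the natural route: one cannot in general take $N$ to be a polynomial in $B$, since if $B$ has two Jordan blocks sharing an eigenvalue but of different sizes, no polynomial in $B$ is nonderogatory.
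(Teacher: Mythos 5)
Your proof is correct and follows essentially the same route as the paper: pass to Jordan form and adjust the scalar part of each block so that distinct blocks acquire distinct eigenvalues, which preserves commutation with $B$ and forces every eigenvalue of $N$ to have geometric multiplicity $1$. The only cosmetic difference is that you replace the eigenvalues outright by arbitrary pairwise distinct $\mu_i$ rather than perturbing them by small multiples of the identity, and you spell out the commutation check; the argument is the same.
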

\begin{proof}
Put $B$ in Jordan form. Perturb each Jordan block by small multiples
of the identity, so that any two distinct blocks are relative to distinct
eigenvalues. The output of this construction is matrix commuting with
$B$, whose eigenvalues all have geometric multiplicity equal to $1$.
\end{proof}
Consider now a framed symplectic sheaf $E$ represented by an ADHM
quadruple $(A,B,I,0)$. Consequently, the ADHM equation reduces to
$[A,B]=0$, and the symmetries are vacuous. Suppose $A$ is nonderogatory.
One can find a basis for which both $A$ and $B$ are symmetric by
Cor. \ref{cor:Tz}. 

Denote the linear subspaces of $M_{n}$ of symmetric and antisymmetric
matrices respectively $S_{n}$ and $AS_{n}$. The linear map 
\[
[A,\_]:S_{n}\rightarrow AS_{n}
\]
 is surjective. Indeed, its kernel has dimension exactly $n=dim(S_{n})-dim(AS_{n})$,
which is the dimension of its subspace defined by polynomials in $A$. 

As a consequence, we can find a symmetric matrix $X$ so that $[A,X]=I\Omega I^{\perp}.$
For $t\in\mathbb{C}$, consider the family of quadruples $(A,B+tX,I,t\cdot Id).$
Since $A$, $B$ and $X$ are symmetric, the $GA,$ $GB$ symmetry
equations are satisfied and, by construction, the $ADHM$ equation
is satisfied as well. For small values of $t$ we can preserve stability,
as it is an open condition. Finally, as $t\cdot Id$ is invertible
for $t\neq0$, we found that $E=[(A,B,I,0)]$ sits in the closure
of $\mathcal{M}_{\Omega}^{reg}$.

We need to show now that the $A-$cyclicity hypothesis may be dropped.
Let $(A,B,I,0)$ as above, and let $N$ be a nonderogatory matrix
with $[N,B]=0$. Consider the line
\[
A_{t}=(1-t)A+tN,\,t\in\mathbb{C}.
\]
This line contains $N,$ and since the set of nonderogatory matrices
is open, one must have that $A_{t}$ is nonderogatoy for any $t,$
except a finite number of values. Furthermore, by construction $[A_{t},B]=0$.
We proved that every neighborhood of $[(A,B,I,0)]$ contains points
of $\overline{\mathcal{M}_{\Omega}^{reg}}$ ; this tells us that $[(A,B,I,0)]\in\overline{\mathcal{M}_{\Omega}^{reg}}$.

\subsection{The case $rk(G)=n-1$}

These ADHM data correspond to symplectic sheaves whose singular locus
is concentrated in one point, with multiplicity 1. We can choose coordinates
such that 
\[
G=\begin{pmatrix}1\!\!1_{n-1} & 0\\
0 & 0
\end{pmatrix}.
\]
The other matrices can be written as 
\[
\begin{pmatrix}A & 0\\
a & \alpha
\end{pmatrix},\,\begin{pmatrix}B & 0\\
b & \beta
\end{pmatrix},\,\begin{pmatrix}I\\
X
\end{pmatrix}.
\]
with $A$ and $B$ symmetric. Without loss of generality we may assume
$\alpha=\beta=0$, since $\mathbb{A}^{2}$ acts on the space of ADHM
configurations by adding multiples of the identity matrix to the endomorphisms.
Of course, the choice of $a$ and $b$ is not unique: by changing
the coordinates we can replace them respectively with $vA+a\lambda,\,vB+b\lambda$
for a nonzero $\lambda\in\mathbb{C}$, keeping $A,B,I$ fixed (the
transformation $g=\begin{pmatrix}1\!\!1_{n-1} & 0\\
v & \lambda
\end{pmatrix}$ does the job). 
\begin{rem}
\label{rem:invsubsp}The subspace of $\mathbb{C}^{n-1}$ 
\[
S=im(A)+im(I)+im(BI)+im(B^{2}I)+\dots+im(B^{n-2}I)
\]
coincides in fact with the whole $\mathbb{C}^{n-1}$. Indeed, the
triple $(A,B,I)$ is stable, and the subspace we are considering contains
by definition the image of $I$, and it is $A$,$B-$invariant. $A-$invariance
is obvious since $S$ contains the image of $A$. To prove $B-$invariance,
we first note that the subspace 
\[
im(I)+im(BI)+im(B^{2}I)+\dots+im(B^{n-2}I)
\]
is $B-$invariant (as $B^{n-1}$ can be written as a polynomial in
$B$ of degree $n-2$ at most). Moreover,  
\[
B(Av)=A(Bv)+I(\Omega I^{\top}v)\,\in im(A)+im(I)
\]
by $[A,B]-I\Omega I^{\top}=0$. 
\end{rem}

\begin{rem}
\label{rem:a=00003D0}If we can choose $a$ (or $b$) to be $0$,
then we can do the following. Consider the family of configurations
\[
(\begin{pmatrix}A & 0\\
0 & 0
\end{pmatrix},\begin{pmatrix}B & tb\\
b & 0
\end{pmatrix},I,\begin{pmatrix}1\!\!1_{n-1} & 0\\
0 & t
\end{pmatrix}).
\]
(or the analogous one with $b=0$). This gives a locally free deformation
of the sheaf $E$. This condition is verified, for example, when $A$
(or $B$) is invertible; in this case, we can write $vA=a$ (or $vB=b$)
and change the coordinates accordingly.
\end{rem}
We need the following technical lemma.
\begin{lem}
\label{lem:tech linal}Let $R$ be a $n-$dimensional vector spaces,
let $T\in End_{\mathbb{C}}(R)$ and $L\subseteq R$ be a subspace.
Let $v\in R$ be $T-$reachable from $L$, meaning
\[
v\in L+TL+T^{2}L+\dots+T^{n-1}L.
\]
There exist parameterized curves $r:\mathbb{C}\rightarrow R$ and
$l:\mathbb{C}\rightarrow L$ with $r(0)=0$, $l(0)=0$ satisfying
\[
(T-t\cdot Id_{R})r(t)=t\cdot v+l(t).
\]
\end{lem}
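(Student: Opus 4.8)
The plan is to write both $r$ and $l$ as explicit \emph{polynomial} curves, the key tool being the elementary telescoping factorization
\[
T^{k}-t^{k}\,Id_{R}=(T-t\,Id_{R})\bigl(T^{k-1}+t\,T^{k-2}+\dots+t^{k-1}\,Id_{R}\bigr),\qquad k\ge 1.
\]
First I would unfold the hypothesis: saying that $v$ is $T$-reachable from $L$ means precisely that there exist $\ell_{0},\dots,\ell_{n-1}\in L$ with $v=\sum_{i=0}^{n-1}T^{i}\ell_{i}$ (by Cayley--Hamilton any higher power $T^{i}$ with $i\ge n$ is a linear combination of lower ones, so stopping at $n-1$ loses nothing).

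With this decomposition fixed, set $p_{i}(T,t):=\sum_{j=0}^{i-1}t^{j}\,T^{\,i-1-j}$ for $i\ge 1$ and $p_{0}:=0$, and define
\[
r(t):=t\sum_{i=1}^{n-1}p_{i}(T,t)\,\ell_{i},\qquad l(t):=-\sum_{i=0}^{n-1}t^{i+1}\,\ell_{i}.
\]
These are polynomial in $t$; the curve $l$ takes values in $L$ since each $\ell_{i}\in L$, and both satisfy $r(0)=0$ and $l(0)=0$ by inspection.

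It then only remains to check the required identity, which is a one-line computation. Applying $(T-t\,Id_{R})$ termwise and using the factorization above with $k=i$ gives $(T-t\,Id_{R})\,p_{i}(T,t)=T^{i}-t^{i}\,Id_{R}$, hence
\[
(T-t\,Id_{R})\,r(t)=t\sum_{i=1}^{n-1}(T^{i}-t^{i}\,Id_{R})\,\ell_{i}=t\Bigl(\sum_{i=1}^{n-1}T^{i}\ell_{i}\Bigr)-\sum_{i=1}^{n-1}t^{i+1}\ell_{i};
\]
replacing $\sum_{i=1}^{n-1}T^{i}\ell_{i}$ by $v-\ell_{0}$ turns the right-hand side into $t\,v-\sum_{i=0}^{n-1}t^{i+1}\ell_{i}=t\,v+l(t)$, as wanted. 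There is no genuine obstacle in this argument; the only point demanding a bit of care is the index bookkeeping, so that the ``remainder'' terms $-\sum t^{i+1}\ell_{i}$ really lie in $L$ and vanish at $t=0$ --- which they do, being scalar combinations of the $\ell_{i}$ with no constant term. Recognizing the telescoping identity as the right device is the only nontrivial idea.
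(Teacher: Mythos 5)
Your proof is correct and is essentially the paper's own argument: after swapping the order of summation, your $r(t)=t\sum_{i}p_{i}(T,t)\ell_{i}$ is exactly the paper's $r(t)=\sum_{i}t^{i}\bigl(\sum_{j\ge i}T^{j-i}l_{j}\bigr)$, and the $l(t)$'s coincide. Your use of the telescoping factorization $(T-t\,Id)\,p_{i}(T,t)=T^{i}-t^{i}\,Id$ merely streamlines the index-chasing the paper carries out by hand.
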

\begin{proof}
Write $v=\stackrel[i=0]{n-1}{\sum}T^{i}l_{i}$, $l_{i}\in L$. Define
\[
r(t)=\stackrel[i=1]{n-1}{\sum}t^{i}(\stackrel[j=i]{n-1}{\sum}T^{j-i}l_{j}).
\]
We obtain 
\[
(T-tId_{R})r(t)=\stackrel[i=1]{n-1}{\sum}t^{i}(\stackrel[j=i]{n-1}{\sum}T^{j-i+1}l_{j})-\stackrel[i=1]{n-1}{\sum}t^{i+1}(\stackrel[j=i]{n-1}{\sum}T^{j-i}l_{j})=
\]
\[
=t\cdot v-tl_{0}+\stackrel[i=2]{n-1}{\sum}t^{i}(\stackrel[j=i]{n-1}{\sum}T^{j-i+1}l_{j})-\stackrel[i=1]{n-2}{\sum}t^{i+1}(\stackrel[j=i+1]{n-1}{\sum}T^{j-i}l_{j})-\stackrel[i=1]{n-2}{\sum}t^{i+1}l_{i}=
\]
\[
=t\cdot v-\stackrel[i=0]{n-1}{\sum}t^{i+1}l_{i}+\stackrel[i=2]{n-1}{\sum}t^{i}(\stackrel[j=i]{n-1}{\sum}T^{j-i+1}l_{j})-\stackrel[k=2]{n-1}{\sum}t^{k}(\stackrel[j=k]{n-1}{\sum}T^{j-k+1}l_{k})=
\]
\[
=t\cdot v-\stackrel[i=0]{n-1}{\sum}t^{i+1}l_{i}.
\]
So, it is enough to set 
\[
l(t)=-\stackrel[i=0]{n-1}{\sum}t^{i+1}l_{i}.
\]

\end{proof}
We apply the lemma to the following situation. Given an ADHM quadruple
\[
(\begin{pmatrix}A & 0\\
a & 0
\end{pmatrix},\begin{pmatrix}B & 0\\
b & 0
\end{pmatrix},\begin{pmatrix}I\\
X
\end{pmatrix},\begin{pmatrix}1\!\!1_{n-1} & 0\\
0 & 0
\end{pmatrix}),
\]
we set: $R=\mathbb{C}^{n-1}$, $L=im(I)$, $v=a^{\top}$. By Remk.
\ref{rem:invsubsp} we can write 
\[
v=Av_{A}+Ix_{0}+BIx_{1}+\dots+B^{n-1}Ix_{n-1}
\]
for some vector $v_{A}\in R$ and $x_{i}\in W$, and we can find an
equivalent triple with tha same $A$, $B$ and $I$ so that $v=Ix_{0}+BIx_{1}+\dots+B^{n-1}Ix_{n-1}$
(just remember we can move $a$ by any vector in the image of $A$).
Let $r(t)$ and $l(t)=I(Y(t))\in im(I)$ satisfying the thesis, and
write the deformation
\[
(\begin{pmatrix}A & 0\\
a+r(t)^{\top} & 0
\end{pmatrix},\begin{pmatrix}B-tId & 0\\
b & 0
\end{pmatrix},\begin{pmatrix}I\\
X+Y^{\top}(t)\cdot\Omega^{-1}
\end{pmatrix},\begin{pmatrix}1\!\!1_{n-1} & 0\\
0 & 0
\end{pmatrix}).
\]
Now, any point of this curve sits in the space of ADHM configurations,
because $[A,B-tId]=[A,B]$, the $GA$, $GB$ symmetries are obviously
satisfied, and the (2,1) block of the commutator is written as 
\[
(a+r(t)^{\top})(B-tId)-bA=aB-bA-bA+r(t)^{\top}(B-tId)-ta=X\Omega I^{\top}+ta+Y^{\top}(t)\cdot I^{\top}-ta=
\]
\[
=(X+Y^{\top}(t)\Omega^{-1})\Omega I^{\top}.
\]
The previous calculation exhibits a small deformation of the given
configuration which has an invertible matrix ($B-tId$) in the (1,1)
block, and $\beta$ in the (2,2) entry: this must sit in $\overline{\mathcal{M}_{\Omega}^{reg}}$
by Remk \ref{rem:a=00003D0}.

\subsection{The general case}

We are ready to deal with the case of quadruples $(A^{\prime},B^{\prime},I^{\prime},G)$
with $k=rk(G)\in\{1,\dots n-2\},$ where $n=dim(V)$ as usual. We
can normalize $G$ to 
\[
G=\begin{pmatrix}1\!\!1_{k} & 0\\
0 & 0_{n-k}
\end{pmatrix}
\]
and thus write 
\[
A^{\prime}=\begin{pmatrix}A & 0\\
a & \alpha
\end{pmatrix},\,B^{\prime}=\begin{pmatrix}B & 0\\
b & \beta
\end{pmatrix},I^{\prime}=\begin{pmatrix}I\\
X
\end{pmatrix},
\]
with $A,B$ symmetric, $[A,B]=I\Omega I^{\top}$, $[\alpha,\beta]=0$
and $(aB-\beta a)-(bA-\alpha b)=X\Omega I^{\top}$. We note that acting
by the $G-$preserving transformation $g_{v}=\begin{pmatrix}1_{k} & 0\\
v & 1_{n-k}
\end{pmatrix}$ we leave $A$, $B$, $\alpha$, $\beta$ and $I$ untouched and move
$a$ and $b$ respectively to $a+vA-\alpha v$ and $b+vB-\beta v$.
In order to deform our quadruple into a rank $n$ one, we shall need
once again to prove that we can slightly deform it and get a quadruple
with vanishing $a$ or $b$. In the $n-1$ case, this was guaranteed
from $A$ or $B$ being invertible if we have $\alpha=\beta=0$. For
general $\alpha$ and $\beta$, what we should require is $\alpha$
not an eigenvalue for $A$ (or similarly for $\beta$ and $B$). More
generally:
\begin{lem}
\label{lem:Bbeta}Let $S\in Mat(k\times k)$ be symmetric and $\sigma\in Mat((n-k)\times(n-k))$.
Suppose that $S$ and $\sigma$ share no eigenvalues. Then the linear
map 
\[
T\in End(Mat((n-k)\times k)),\:T(v)=vS-\sigma v
\]
is invertible. \end{lem}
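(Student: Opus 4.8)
The plan is to recognize $T$ as a Sylvester-type operator $v \mapsto vS - \sigma v$ and to reduce its invertibility to the spectral hypothesis by a Cayley--Hamilton argument; the symmetry of $S$ will play no role. Since $T$ is an endomorphism of the finite-dimensional vector space $\mathrm{Mat}((n-k)\times k)$, it suffices to prove that $T$ is injective.

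So suppose $v$ lies in $\ker T$, that is, $vS = \sigma v$. Multiplying this identity on the right by $S$ and applying it again yields $vS^2 = (\sigma v)S = \sigma(vS) = \sigma^2 v$, and by an immediate induction $vS^m = \sigma^m v$ for every $m \ge 0$. Hence $v\,p(S) = p(\sigma)\,v$ for every polynomial $p \in \mathbb{C}[t]$.

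Now take $p = \chi_\sigma$, the characteristic polynomial of $\sigma$. By the Cayley--Hamilton theorem $\chi_\sigma(\sigma) = 0$, so the relation above gives $v\,\chi_\sigma(S) = 0$. On the other hand, putting $S$ in upper triangular form over $\mathbb{C}$ (recall $\mathbb{K} = \mathbb{C}$) with diagonal entries $\lambda_1,\dots,\lambda_k$ — the eigenvalues of $S$ counted with multiplicity — the matrix $\chi_\sigma(S)$ is upper triangular with diagonal entries $\chi_\sigma(\lambda_1),\dots,\chi_\sigma(\lambda_k)$, so $\det \chi_\sigma(S) = \prod_{j=1}^{k} \chi_\sigma(\lambda_j)$. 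Since $S$ and $\sigma$ share no eigenvalue, no $\lambda_j$ is a root of $\chi_\sigma$, whence this determinant is nonzero and $\chi_\sigma(S)$ is invertible. From $v\,\chi_\sigma(S) = 0$ we conclude $v = 0$, so $T$ is injective and therefore invertible.

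I do not expect a genuine obstacle here; the only step that needs a word of justification is the spectral identity $\det \chi_\sigma(S) = \prod_j \chi_\sigma(\lambda_j)$, which is exactly the triangularization argument above. One could instead triangularize $S$ and $\sigma$ simultaneously and note that $T$ itself becomes triangular with diagonal entries $\lambda_j - \mu_i$ (the $\mu_i$ being the eigenvalues of $\sigma$), giving $\det T = \prod_{i,j}(\lambda_j - \mu_i) \neq 0$ directly; but this requires fixing a basis of $\mathrm{Mat}((n-k)\times k)$ adapted to both triangularizations, and the Cayley--Hamilton route above avoids that bookkeeping.
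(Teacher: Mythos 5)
Your proof is correct, but it follows a different route from the paper's. The paper triangularizes $\sigma$ (in lower triangular form with diagonal entries $s_1,s_2,\dots$), writes $v$ in rows $v_1,\dots,v_{n-k}$, and observes that the relation $vS=\sigma v$ forces the first row to satisfy $v_1S=s_1v_1$, i.e.\ (using the symmetry of $S$ to pass from a left eigenvector to an ordinary one) $S v_1^{\top}=s_1v_1^{\top}$; since $s_1$ is an eigenvalue of $\sigma$ and hence not of $S$, this gives $v_1=0$, and an induction down the rows kills $v$. Your argument instead intertwines polynomials, $v\,p(S)=p(\sigma)\,v$, and invokes Cayley--Hamilton together with the spectral mapping theorem to conclude $v\,\chi_\sigma(S)=0$ with $\chi_\sigma(S)$ invertible. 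Your route is basis-free, makes no use of the symmetry hypothesis on $S$ (correctly so --- the lemma is the standard uniqueness criterion for the Sylvester equation and holds without it), and is arguably more complete as written: the paper's proof only spells out the first step of its row induction. The paper's route is more elementary and hands-on, needing nothing beyond triangularization and the definition of an eigenvector. Both are standard proofs of the same classical fact, and either suffices for the application in Section 5.
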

\begin{proof}
Choose coordinates on $\mathbb{C}^{n-k}$ so that $\sigma$ is lower
triangular: 
\[
\sigma=\begin{pmatrix}s_{1} & 0 & \cdots\\
\star & s_{2} & \cdots\\
\vdots & \vdots & \ddots
\end{pmatrix}
\]
Suppose there exists a matrix $v=\begin{pmatrix}v_{1}\\
\vdots\\
v_{n-k}
\end{pmatrix}$, $v_{i}\in\mathbb{C}^{k}$ satisfying $vS=\sigma v.$ We have $\sigma v=\begin{pmatrix}s_{1}v_{1}\\
\star\\
\star
\end{pmatrix}=vS=\begin{pmatrix}v_{1}S\\
\vdots\\
v_{n-k}S
\end{pmatrix}.$ We obtain $S(v_{1}^{\top})=s_{1}v_{1}^{\top}$. So if $S$ and $\sigma$
have disjoint spectra, $T$ must be injective, i.e. an isomorphism.
\end{proof}
We want to apply Lemma \ref{lem:tech linal} again to prove that we
can perturb the quadruple to separate the spectra of $B$ and $\beta$
in order to obtain a quadruple for which $b=0$. We need the following
generalization of Remk. \ref{rem:invsubsp}.
\begin{lem}
\label{lem:invsub}Let $R=Mat((n-k)\times k)$ and $A,B,I$ as above
($A$ and $B$ symmetric $k\times k$ matrices, $I\in Hom(\mathbb{C}^{r},\mathbb{C}^{k})$
with $(\mathbb{C}^{r},\Omega)$ a symplectic vector space, $[A,B]=I\Omega I^{\top}$,
stability is satisfied). 
\begin{enumerate}
\item Suppose that there exists a subspace $R^{\prime}\subseteq R$ which
is stable with respect to the maps $v\mapsto vA$, $v\mapsto vB$
and containing the image of the linear map 
\[
\tilde{I}:Hom(\mathbb{C}^{r},\mathbb{C}^{n-k})\rightarrow R,\,\,X\mapsto XI^{\top}.
\]
Then $R^{\prime}=R.$ 
\item Let $\alpha,\beta\in Mat((n-k)\times(n-k))$, $[\alpha,\beta]=0$
and let $T_{A,\alpha},T_{B,\beta}\in End(R)$ defined by 
\[
v\mapsto vA-\alpha v,\,v\mapsto vB-\beta v
\]
respectively. Then the identity
\[
im(T_{A,\alpha})+im(\tilde{I})+im(T_{B,\beta}\tilde{I})+im(T_{B,\beta}^{2}\tilde{I})+\dots+im(T_{B,\beta}^{dim(R)-1}\tilde{I})=R
\]
holds.
\end{enumerate}
\end{lem}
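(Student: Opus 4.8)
The plan is to deduce part (2) from part (1), and to prove part (1) directly from the stability of $(A,B,I)$. Throughout I would identify $R=\mathrm{Mat}((n-k)\times k)$ with $\mathbb{C}^{n-k}\otimes\mathbb{C}^{k}$, the second tensor factor recording the rows of a matrix; under this identification right multiplication $v\mapsto vA$ becomes $1\otimes A^{\top}=1\otimes A$ (here one uses that $A$ is symmetric), similarly $v\mapsto vB$ becomes $1\otimes B$, left multiplication $v\mapsto\alpha v$ becomes $\alpha\otimes 1$, and $\mathrm{im}(\tilde{I})$ becomes exactly $\mathbb{C}^{n-k}\otimes\mathrm{im}(I)$ — the subspace of matrices all of whose rows lie in $\mathrm{im}(I)\subseteq\mathbb{C}^{k}$ (because $\tilde{I}(X)=XI^{\top}$ has $i$-th row equal to $I$ applied to the $i$-th row of $X$).

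For part (1): let $R'\subseteq R$ be stable under $v\mapsto vA$ and $v\mapsto vB$ and contain $\mathrm{im}(\tilde{I})$. Fix $e\in\mathbb{C}^{n-k}$; then $e\otimes\mathrm{im}(I)\subseteq R'$, and applying $1\otimes A$ and $1\otimes B$ repeatedly we get $e\otimes S_{0}\subseteq R'$, where $S_{0}$ is the smallest $A,B$-invariant subspace of $\mathbb{C}^{k}$ containing $\mathrm{im}(I)$. But stability of the triple $(A,B,I)$ says precisely that $S_{0}=\mathbb{C}^{k}$, so $e\otimes\mathbb{C}^{k}\subseteq R'$ for every $e$, i.e. $R'=R$.

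For part (2): write $R'$ for the left-hand side and $M=\sum_{j=0}^{\dim R-1}T_{B,\beta}^{j}(\mathrm{im}\,\tilde{I})$, so $R'=\mathrm{im}(T_{A,\alpha})+M$. By Cayley--Hamilton applied to $T_{B,\beta}\in\mathrm{End}(R)$, the subspace $M$ is $T_{B,\beta}$-invariant. The computational core is the commutator identity
\[
[T_{B,\beta},T_{A,\alpha}](v)=v[A,B]-[\alpha,\beta]v=v\,I\Omega I^{\top}=\tilde{I}(vI\Omega),
\]
which uses both $[\alpha,\beta]=0$ and the ADHM relation $[A,B]=I\Omega I^{\top}$; in particular $C:=[T_{B,\beta},T_{A,\alpha}]$ has image inside $\mathrm{im}(\tilde{I})$. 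From this I would check, in order: (a) $R'$ is $T_{B,\beta}$-invariant, since $T_{B,\beta}$ preserves $M$ and $T_{B,\beta}T_{A,\alpha}=T_{A,\alpha}T_{B,\beta}+C$ sends $\mathrm{im}(T_{A,\alpha})$ into $\mathrm{im}(T_{A,\alpha})+\mathrm{im}(\tilde{I})\subseteq R'$; (b) $R'$ is $T_{A,\alpha}$-invariant, using the expansion $T_{A,\alpha}T_{B,\beta}^{j}=T_{B,\beta}^{j}T_{A,\alpha}-\sum_{i=0}^{j-1}T_{B,\beta}^{i}C\,T_{B,\beta}^{j-1-i}$, whose last sum lands in $M$ and whose first term lands in $R'$ by (a); (c) left multiplication by $\alpha$ and by $\beta$ commutes with both $T_{A,\alpha}$ and $T_{B,\beta}$ (again thanks to $[\alpha,\beta]=0$) and preserves $\mathrm{im}(\tilde{I})$, hence preserves $\mathrm{im}(T_{A,\alpha})$ and $M$, hence preserves $R'$. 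Combining (a)--(c), and writing $vA=T_{A,\alpha}(v)+\alpha v$, $vB=T_{B,\beta}(v)+\beta v$, we get that $R'$ is stable under $v\mapsto vA$ and $v\mapsto vB$; since it also contains $\mathrm{im}(\tilde{I})$, part (1) forces $R'=R$.

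All of the verifications (a)--(c), as well as the commutator identity and the expansion of $T_{A,\alpha}T_{B,\beta}^{j}$, are one-line matrix manipulations; the only genuine idea is that $[\alpha,\beta]=0$ together with the ADHM equation is exactly what makes $R'$ stable under true right multiplication by $A$ and $B$, so that part (1) becomes applicable. The main thing to be careful about is the order of the argument in part (2): one must establish the $T_{B,\beta}$-invariance of $R'$ before using it in the proof of its $T_{A,\alpha}$-invariance.
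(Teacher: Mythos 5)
Your proof is correct and follows essentially the same route as the paper's: part (1) is reduced row\mbox{-}by\mbox{-}row (equivalently, via the tensor decomposition $R\cong\mathbb{C}^{n-k}\otimes\mathbb{C}^{k}$) to the ADHM stability of $(A,B,I)$, and part (2) is reduced to part (1) by showing the subspace is stable under honest right multiplication by $A$ and $B$, using Cayley--Hamilton together with the commutator identity $[T_{B,\beta},T_{A,\alpha}](v)=v[A,B]-[\alpha,\beta]v=\tilde{I}(vI\Omega)$, which is exactly the relation the paper uses in the line $\tilde{B}(T_{A,\alpha}(v))=T_{A,\alpha}(vB)+\tilde{I}(vI\Omega)$. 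The only organizational difference is that the paper first replaces the generators $T_{B,\beta}^{j}\tilde{I}$ by $\tilde{B}^{j}\tilde{I}$ before checking invariance, whereas you keep the original generators and check invariance under $T_{A,\alpha}$, $T_{B,\beta}$ and left multiplication by $\alpha,\beta$ separately; the computations are the same.
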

\begin{proof}
The first part is very easy. It is enough to prove that for a given
$R^{\prime}$ as in the hypothesis, any matrix of the form 
\[
v=\begin{pmatrix}0\\
\vdots\\
0\\
v_{i}\\
0\\
\vdots\\
0
\end{pmatrix},\,v_{i}\in\mbox{\ensuremath{\mathbb{C}}}^{k}
\]
belongs to $R^{\prime}$. We will prove it only for $i=1$, as the
other cases are completely analogous. The linear subspace $S\subseteq\mathbb{C}^{k}$
given by vectors $s$ such that $\begin{pmatrix}s\\
0\\
\vdots\\
0
\end{pmatrix}\in R^{\prime}$ must be necessarily the whole of $\mathbb{C}^{k}$, since 
\[
\begin{pmatrix}s\\
0\\
\vdots\\
0
\end{pmatrix}A=\begin{pmatrix}sA\\
0\\
\vdots\\
0
\end{pmatrix}\in R^{\prime},
\]
we see that $S$ is $A-$stable (and similarly, $B-$stable). Furthermore,
\[
X_{1}=\begin{pmatrix}x_{1}\\
0\\
\vdots\\
0
\end{pmatrix}\in Hom(\mathbb{C}^{r},\mathbb{C}^{n-k})\implies\tilde{I}(X_{1})=X_{1}I^{\top}=\begin{pmatrix}x_{1}I^{\top}\\
0\\
\vdots\\
0
\end{pmatrix}\in R^{\prime},
\]
so $S\supseteq im(I)$, and we are done thanks to the stability of
the triple $(A,B,I)$.

To prove the second part, just apply the first to 
\[
R^{\prime}=im(T_{A,\alpha})+im(\tilde{I})+im(T_{B,\beta}\tilde{I})+im(T_{B,\beta}^{2}\tilde{I})+\dots+im(T_{B,\beta}^{dim(R)-1}\tilde{I}).
\]
By definition $R^{\prime}$ contains $Im(\tilde{I})$, so we just
need to prove it is $A$, $B-$ stable in the above sense. We note
that any $v\in im(\tilde{I})+im(T_{B,\beta}\tilde{I})+im(T_{B,\beta}^{2}\tilde{I})+\dots+im(T_{B,\beta}^{dim(R)-1}\tilde{I})$
can be rewritten as 
\[
v=X_{0}^{\prime}I^{\top}+X_{1}^{\prime}I^{\top}B+\dots+X_{k-1}^{\prime}I^{\top}B^{k-1},
\]
because 
\[
T_{B,\beta}^{m}\tilde{I}X=XI^{\top}B^{k}-\begin{pmatrix}m\\
2
\end{pmatrix}\beta XI^{\top}B^{k-1}+\dots-(1)^{m}\beta^{m}XI^{\top}.
\]
So, we may write: 
\[
R^{\prime}=im(T_{A,\alpha})+im(\tilde{I})+im(\tilde{B}\tilde{I})+im(\tilde{B}^{2}\tilde{I})+\dots+im(\tilde{B}^{k-1}\tilde{I})
\]
where $\tilde{B}v=vB$. Now we have: 
\[
\begin{array}{c}
\tilde{A}(T_{A,\alpha}(v))=T_{A,\alpha}(\tilde{A}v)\in R^{\prime}\\
\tilde{A}(\tilde{B}^{k}\tilde{I}(X))=XI^{\top}B^{k}A=XI^{\top}B^{k}A-\alpha XI^{\top}B^{k}+\alpha XI^{\top}B^{k}=T_{A,\alpha}(XI^{\top}B^{k})+\tilde{B}^{k}\tilde{I}(\alpha X)\in R^{\prime}\\
\tilde{B}(T_{A,\alpha}(v))=vAB-\alpha vB=vBA+vI\Omega I^{\top}-\alpha vB=T_{A,\alpha}(vB)+\tilde{I}(vI\Omega)\in R^{\prime}\\
\tilde{B}(\tilde{B}^{k}\tilde{I}(X))\in Im(\tilde{B}^{k+1}\tilde{I})\subseteq R^{\prime}.
\end{array}
\]
This guarantees $A$, $B-$stability of $R^{\prime}$, and concludes
the proof.
\end{proof}
We are now able to apply Lemma \ref{lem:tech linal} to our ADHM quadruple
\[
(\begin{pmatrix}A & 0\\
a & \alpha
\end{pmatrix},\,\begin{pmatrix}B & 0\\
b & \beta
\end{pmatrix},\,\begin{pmatrix}I\\
X
\end{pmatrix},\begin{pmatrix}1\!\!1_{k} & 0\\
0 & 0_{n-k}
\end{pmatrix})
\]
 in the following way. Thanks to Lemma \ref{lem:invsub}, we can write
\[
a=vA-\alpha v+X_{0}I^{\top}+T_{B,\beta}(X_{1}I^{\top})+\dots T_{B,\beta}^{k-1}(X_{k-1}I^{\top}),
\]
and we can change the coordinates to eliminate the addend $vA-\alpha v$.
We apply Lemma \ref{lem:tech linal} to find a small deformation of
$a$, written as $a_{t}=a+v(t)$, satisfying
\[
(T_{B,\beta}-tId)v(t)=ta+Y(t)\Omega I^{\top}\implies(T_{B,\beta}-tId)a_{t}+T_{B,\beta}(a)=Y(t)\Omega I^{\top}
\]
with $Y(0)=0$, and so we obtain
\[
a_{t}(B-tId)-\beta a_{t}-bA+\alpha b=
\]
\[
=aB-\beta a-bA+\alpha b+Y(t)\Omega I^{\top}=(X+Y(t))\Omega I^{\top}.
\]
In other words, the deformation 
\[
(\begin{pmatrix}A & 0\\
a_{t} & \alpha
\end{pmatrix},\,\begin{pmatrix}B-tId & 0\\
b & \beta
\end{pmatrix},\,\begin{pmatrix}I\\
X+Y(t)
\end{pmatrix},\begin{pmatrix}1\!\!1_{k} & 0\\
0 & 0_{n-k}
\end{pmatrix})
\]
still sits in the space of symplectic ADHM data, and remains stable
for small $t.$ Obviously, the spectra of $\beta$ and $B-tId$ are
disjoint for arbitrarily small nonzero values of $t$; therefore,
up to small deformations, we can indeed assume $b=0$, by means of
Lemma \ref{lem:Bbeta} together with the usual change of coordinates
$a\mapsto vA-\alpha v+a$, $b\mapsto vB-\beta v+b$. 

So we can suppose without loss of generality that our quadruple is
of the form
\[
(\begin{pmatrix}A & 0\\
a & \alpha
\end{pmatrix},\,\begin{pmatrix}B & 0\\
0 & \beta
\end{pmatrix},\,\begin{pmatrix}I\\
X
\end{pmatrix},\begin{pmatrix}1\!\!1_{k} & 0\\
0 & 0_{n-k}
\end{pmatrix}).
\]
Now we are only left to apply what we have learned from the rank $0$
case, which is to play with nonderogatory matrices. First, we note
that we can deform $\alpha$ as $\alpha_{t}=(1-t)\alpha+t\eta$ for
any matrix $\eta$ commuting with $\beta$, and this does not harm
the ADHM equations or the symmetries (this is why we made all the
work to get $b=0$, to make sure that no term of type $b\alpha_{t}$
comes to ruin the party). This way, we can suppose that $\alpha$
is nonderogatory up to small deformations, and has no eigenvalues
in common with $A$ (it is enough to choose $\eta$ nonderogatory
and, if necessary, modify $\alpha$ once again by adding small multiples
of $Id_{n-k}$ to slide the eigenvalues). By changing the coordinates
as usual, we obtain a quadruple of type
\[
(\begin{pmatrix}A & 0\\
0 & \alpha
\end{pmatrix},\,\begin{pmatrix}B & 0\\
b & \beta
\end{pmatrix},\,\begin{pmatrix}I\\
X
\end{pmatrix},\begin{pmatrix}1\!\!1_{k} & 0\\
0 & 0_{n-k}
\end{pmatrix}),
\]
which is exactly as before except for a crucial detail: $\alpha$
is nonderogatory. This means that by acting with a change of coordinates
of type $\begin{pmatrix}1\!\!1_{k} & 0\\
0 & g
\end{pmatrix}$, we can suppose that the commuting matrices $\alpha$ and $\beta$
are symmetric. Finally, let $\chi$ be a symmetric matrix satisfying
$[\alpha,\chi]=X\Omega X^{\top}$: it must exist, due to the cyclicity
of $\alpha$. 

Write down the final deformation
\[
(\begin{pmatrix}A & 0\\
0 & \alpha
\end{pmatrix},\,\begin{pmatrix}B & tb^{\top}\\
b & \beta+t\chi
\end{pmatrix},\,\begin{pmatrix}I\\
X
\end{pmatrix},\begin{pmatrix}1\!\!1_{k} & 0\\
0 & t1\!\!1_{n-k}
\end{pmatrix}).
\]
Let us verify that this curve sits in the space of ADHM symplectic
data:
\[
\begin{pmatrix}1\!\!1_{k} & 0\\
0 & t1\!\!1_{n-k}
\end{pmatrix}\begin{pmatrix}A & 0\\
0 & \alpha
\end{pmatrix}-\begin{pmatrix}A & 0\\
0 & \alpha
\end{pmatrix}^{\top}\begin{pmatrix}1\!\!1_{k} & 0\\
0 & t1\!\!1_{n-k}
\end{pmatrix}=
\]
\[
\begin{pmatrix}A-A^{\top} & 0\\
0 & t\alpha-t\alpha^{\top}
\end{pmatrix}=0.
\]
\[
\begin{pmatrix}1\!\!1_{k} & 0\\
0 & t1\!\!1_{n-k}
\end{pmatrix}\begin{pmatrix}B & tb^{\top}\\
b & \beta+t\chi
\end{pmatrix}-\begin{pmatrix}B & tb^{\top}\\
b & \beta+t\chi
\end{pmatrix}^{\top}\begin{pmatrix}1\!\!1_{k} & 0\\
0 & t1\!\!1_{n-k}
\end{pmatrix}=
\]
\[
=\begin{pmatrix}B-B^{\top} & tb^{\top}-tb^{\top}\\
tb-tb & t\beta-t\beta^{\top}+t^{2}\chi-t^{2}\chi^{\top}
\end{pmatrix}=0.
\]
These give the $G$-symmetries. Recall that the ADHM equation for
$t=0$ is given by $\alpha b-bA=X\Omega I^{\top}$, and that $\Omega^{\top}=-\Omega$.
We verify the ADHM equation along the curve:
\[
[\begin{pmatrix}A & 0\\
0 & \alpha
\end{pmatrix},\begin{pmatrix}B & tb^{\top}\\
b & \beta+t\chi
\end{pmatrix}]-\begin{pmatrix}I\\
X
\end{pmatrix}\Omega\begin{pmatrix}I^{\top} & X^{\top}\end{pmatrix}\begin{pmatrix}1\!\!1_{k} & 0\\
0 & t1\!\!1_{n-k}
\end{pmatrix}=
\]
\[
=\begin{pmatrix}[A,B]-I\Omega I^{\top} & tAb^{\top}-tb^{\top}\alpha-tI\Omega X^{\top}\\
\alpha b-bA-X\Omega I^{\top} & t[\alpha,\chi]-tX\Omega X^{\top}
\end{pmatrix}=0
\]

This finishes the proof of the irreducibility, since $G_{t}=\begin{pmatrix}1\!\!1_{k} & 0\\
0 & t1\!\!1_{n-k}
\end{pmatrix}$ is invertible and so the general point of this curve lies in the
locally free locus of the moduli space.
\begin{cor}
$dim(\mathcal{M}_{\Omega}(r,n))=dim(\mathcal{M}_{\Omega}^{reg}(r,n))=rn+2n$.
\end{cor}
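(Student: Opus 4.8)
The plan is short, since the statement is a consequence of the irreducibility established in Section \ref{sec:irred} together with the known structure of the locally free locus. First I would record that $\mathcal{M}_\Omega^{reg}(r,n)$ is a nonempty open subscheme of $\mathcal{M}_\Omega(r,n)$. Openness is immediate from the ADHM description: under the isomorphism $\mathcal{M}_{\mathbb{P}^2,\Omega}^l\cong\mathbb{M}_\Omega^s/GL(V)$ the locally free locus corresponds to $\{\det(G)\neq0\}/GL(V)$ by Lemma \ref{lem:Gnonz locf}, which is the quotient of an invariant Zariski-open subset and hence open; nonemptiness holds because, for instance, the trivial bundle $\mathcal{O}_{\mathbb{P}^2}^{\oplus r}$ with the constant symplectic form $\Omega$ gives a point, and more generally $\mathcal{M}_\Omega^{reg}(r,n)$ is known to be nonempty (\cite{BFG}).

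Next I would invoke that $\mathcal{M}_\Omega(r,n)$ is irreducible, which is exactly what the curves constructed in the preceding subsections give: every $[A,B,I,G]$ lies in the closure of $\mathbb{M}_\Omega^{sc}/GL(V)$. Since a nonempty open subset of an irreducible scheme is dense, and a dense subset of an irreducible scheme has the same Krull dimension, it follows that $\dim\mathcal{M}_\Omega(r,n)=\dim\mathcal{M}_\Omega^{reg}(r,n)$. Finally, for the numerical value I would simply quote that $\mathcal{M}_\Omega^{reg}(r,n)$ is smooth, connected, of dimension $rn+2n$ (\cite{BFG}); this is precisely the input recalled in the ``Strategy of the proof'' paragraph, and it closes the argument. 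As a purely internal alternative one can also read the count off Corollary \ref{cor:smooth lf points}: there the tangent space at a framed symplectic bundle is $H^1(Ad^\varphi(E)(-D))$, sitting in the displayed split exact sequence with $Ext^1(E,E(-D))$ of dimension $2rn$ and $Ext^1(\Lambda^2E,\mathcal{O}(-D))$ of dimension $rn-2n$ (an Euler-characteristic computation on $\mathbb{P}^2$), whence again $rn+2n$.

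The only point that requires care is the density statement, but that is entirely the content of the three preceding subsections — openness of $\mathcal{M}_\Omega^{reg}$ follows from Lemma \ref{lem:Gnonz locf} and denseness from the explicit deformations to data with invertible $G$ — so once irreducibility is in hand, the corollary is formal. I therefore do not expect a genuine obstacle here; the work has already been done upstream.
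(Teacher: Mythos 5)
Your argument is correct and is exactly the one the paper leaves implicit: the corollary follows because $\mathcal{M}_{\Omega}^{reg}(r,n)$ is a nonempty open (hence, by the irreducibility just proved, dense) subscheme, and its dimension $rn+2n$ is the input already recalled in the ``Strategy of the proof'' paragraph. Your internal cross-check via Corollary \ref{cor:smooth lf points} and the Euler-characteristic count $2rn-(rn-2n)=rn+2n$ is also consistent with the computation the paper performs later in the section on singularities.
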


\section{Relations with Uhlenbeck spaces and singularities}

The aim of this last section is twofold. First, we want to construct
a proper birational map from the moduli space of framed symplectic
sheaves into the space of symplectic ideal instantons. The map will
simply be the restriction of the so called Gieseker-to-Uhlenbeck map,
defined on the moduli space of framed sheaves (we refer to \cite{BMT}
for the precise definition of this morphism). This provides a concrete
example of the fact that Uhlenbeck spaces of type $C$ can be obtained
by generalized blow-downs of moduli spaces of symplectic sheaves,
as suggested in \cite{Bal,Ba2}. To this end, we will recall the definitions
of Uhlenbeck spaces in a purely algebraic setting. In constrast with
the classical case, the moduli space $\mathcal{M}_{\Omega}(r,n)$
is singular in general. The second part of the section presents some
results about its singular locus.

\subsection{Uhlenbeck spaces}

We recall the definition of Uhlenbeck spaces.
\begin{defn}
Let $(r,n)$ be positive integers. We call \emph{Uhlenbeck space }or
\emph{space of framed ideal instantons }the affine scheme defined
by the categorical quotient 
\[
\mathcal{M}_{0}(r,n):=\mathbb{M}(r,n)/\!/GL(V).
\]

\end{defn}
The following theorem summarizes some of the properties of this scheme.
For details, see \cite{Na,BFG}.
\begin{thm}
The following statements hold:
\begin{enumerate}
\item $\mathcal{M}_{0}(r,n)$ is reduced and irreducible;
\item the open embedding $\mathbb{M}^{sc}(r,n)\hookrightarrow\mathbb{M}(r,n)$
descends to an open embedding $\mathcal{M}^{reg}(r,n)\rightarrow\mathcal{M}_{0}(r,n)$,
which is the smooth locus of $\mathcal{M}_{0}(r,n)$;
\item $\mathcal{M}_{0}(r,n)$ has a stratification into locally closed subsets
of the form
\[
\mathcal{M}_{0}(r,n)=\underset{k=0}{\overset{n}{\bigsqcup}}\mathcal{M}^{reg}(r,n-k)\times(\mathbb{A}^{2})^{(k)},
\]
where $(\mathbb{A}^{2})^{(k)}$ is the $k-th$ symmetric power of
the affine space $\mathbb{P}^{2}\backslash l$;
\item the open embedding $\mathbb{M}^{s}(r,n)\rightarrow\mathbb{M}(r,n)$
descends to a projective morphism 
\[
\pi:\mathcal{M}(r,n)\rightarrow\mathcal{M}_{0}(r,n)
\]
which is a resolution on singularities. 
\end{enumerate}
\end{thm}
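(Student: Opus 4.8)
These are classical facts about the ADHM construction, and the plan is to recall the mechanism behind each statement and to refer to \cite{Na,BFG} for the details; throughout one works with the affine GIT quotient $\mathcal{M}_0(r,n)=\mathbb{M}(r,n)/\!/GL(V)$, and the argument is essentially bookkeeping with this quotient.

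For (1), the point is that $\mathbb{M}^s(r,n)$ is smooth, connected and dense in $\mathbb{M}(r,n)$: it is a principal $GL(V)$-bundle over the smooth connected variety $\mathcal{M}(r,n)$ (Thm.~\ref{thm:ADHM=00003Dmoduli}), hence irreducible, and its density — equivalently the irreducibility and reducedness of $\mathbb{M}(r,n)$, which is in fact a reduced complete intersection — is part of the content of \cite{Na}. Since the categorical quotient of a reduced irreducible affine scheme by a reductive group is again reduced and irreducible, (1) follows. For (2), on $\mathbb{M}^{sc}(r,n)$ the $GL(V)$-action is free with closed orbits of maximal dimension, so by the general theory of affine GIT quotients the map $\mathbb{M}(r,n)\to\mathcal{M}_0(r,n)$ restricts there to a geometric quotient with open image; by Thm.~\ref{thm:ADHM=00003Dmoduli} that quotient is $\mathcal{M}^{reg}(r,n)$, which is smooth of dimension $2rn$. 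That $\mathcal{M}^{reg}(r,n)$ is the \emph{whole} smooth locus will follow once (3) exhibits every boundary point as singular.

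For (3), the heart of the matter is the classification of closed $GL(V)$-orbits. Given $(A,B,I,J)$, let $S\subseteq V$ be the smallest $A,B$-invariant subspace containing $\mathrm{im}(I)$; the restriction $(A,B,I)|_S$ is a stable datum of some charge $n-k$, while passing to the closed orbit in the fibre forces $I$ and $J$ to vanish on a complement of $S$ and the induced commuting pair $(\bar A,\bar B)$ on $V/S$ to be semisimple, so that the data on $V/S$ is recorded by $k$ unordered eigenvalue-pairs, i.e. a point of $\mathrm{Sym}^k(\mathbb{A}^2)$ with $\mathbb{A}^2=\mathbb{P}^2\setminus l$ (the jumping points cannot lie on the framing line $l$, which is why $\mathbb{A}^2$ rather than $\mathbb{P}^2$ appears). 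This produces the set-theoretic decomposition; checking that each stratum is locally closed, that the ``bundle part'' genuinely reproduces $\mathcal{M}^{reg}(r,n-k)$, and that the points depend algebraically on the datum, is carried out in \cite{BFG}. Since every stratum with $k\geq 1$ consists of singular points, this also completes (2). For (4), $\pi$ is the canonical morphism from the GIT quotient $\mathbb{M}^s(r,n)/GL(V)$ (taken with the standard nontrivial linearization) to the affine quotient $\mathbb{M}(r,n)/\!/GL(V)$, and such a morphism is always projective; it is an isomorphism over $\mathcal{M}^{reg}(r,n)$ by (2), and since $\mathcal{M}(r,n)$ is smooth (Thm.~\ref{thm:ADHM=00003Dmoduli}) and $\pi$ is proper and birational, it is a resolution of singularities.

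The step I expect to be the real obstacle is (3): the semisimplification argument, and the verification that the resulting decomposition is a stratification by locally closed subschemes with the stated factors. Since the whole theorem is quoted from \cite{Na,BFG}, however, in practice the write-up would simply state it with those references and merely indicate the ADHM bookkeeping sketched above.
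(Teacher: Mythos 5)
Your proposal is correct and matches the paper exactly in spirit: the paper states this theorem with no proof at all, simply deferring to \cite{Na,BFG}, which is precisely what you say the final write-up would do. The sketches you give of the underlying mechanisms (irreducibility of $\mathbb{M}(r,n)$ passing to the affine quotient, the classification of closed $GL(V)$-orbits via semisimplification yielding the stratification, and projectivity of the map from the GIT quotient with nontrivial linearization to the affine quotient) are the standard arguments and are accurate, so nothing further is needed.
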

\begin{rem}
As a set-theoretic map, $\pi$ has a very simple description. Let
$(E,a)$ be a framed sheaf of charge $n$; the locally free sheaf
$E^{\vee\vee}$ inherits a framing $a^{\vee\vee}$ from $(E,a)$,
and sits in an exact sequence
\[
\xymatrix{0\ar[r] & E\ar[r] & E^{\vee\vee}\ar[r] & C\ar[r] & 0}
\]
where $C$ is a $0-$dimensional sheaf supported away from $l$. Let
$Z(C)$ be the corresponding $0-$cycle on $\mathbb{A}^{2}$. As
\[
length(C)+c_{2}(E^{\vee\vee})=n,
\]
we obtain a point
\[
\pi([E,a])=([E^{\vee\vee},a^{\vee\vee}],Z(C))\in\mathcal{M}_{0}(r,n).
\]

In particular, we see that the restriction of $\pi$ to the locally
free locus $\mathcal{M}^{reg}(r,n)$ induces an isomorphism onto the
open subscheme $\mathcal{M}^{reg}(r,n)\subseteq\mathcal{M}_{0}(r,n)$.
\end{rem}
It is possible to construct a symplectic variant of the Uhlenbeck
space. Once again, let $V\cong\mathbb{C}^{n}$ and $W\cong\mathbb{C}^{r}$.
Fix a symplectic structure $\Omega$ on $W$ and a symmetric bilinear
nondegenerate form $1_{V}$ on $V$, and call $End^{+}(V)$ the space
of symmetric endomorphisms of $V$. Define the subspace of $End^{+}(V)^{\oplus2}\oplus Hom(W,V)$:
\[
\mathbb{X}(r,n)=\{(A,B,I)\mid[A,B]+I\Omega I^{\top}=0\}
\]
This space is naturally acted by the orthogonal group $O(V)$. 
\begin{rem}
Let $\tau:\mathbb{X}(r,n)\rightarrow\mathbb{M}_{\Omega}(r,n)$ be
the embedding defined by 
\[
\tau(A,B,I)\rightarrow(A,B,I,1_{V}).
\]
The morphism $\tau$ is equivariant with respect to the group homomorphism
$O(V)\rightarrow GL(V)$ defined by $1_{V}$. If we set $\mathbb{X}^{s}:\tau^{-1}(\mathbb{M}_{\Omega}^{s})=\tau^{-1}(\mathbb{M}_{\Omega}^{sc})$
(see Lemma \ref{lem:Gnonz locf}), we obtain indeed an isomorphism
of algebraic varieties 
\[
\mathbb{X}^{s}(r,n)/O(V)\rightarrow\mathbb{M}_{\Omega}^{sc}(r,n)/GL(V)(\cong\mathcal{M}_{\Omega}^{reg}(r,n)).
\]
\end{rem}
\begin{defn}
We define the symplectic Uhlenbeck space as the categorical quotient
\[
\mathcal{M}_{0,\Omega}(r,n)=\mathbb{X}(r,n)/\!/O(V).
\]

\end{defn}
We list some interesting properties of this affine scheme. For details,
see \cite{BFG,NS,Ch}.
\begin{thm}
The following statements hold:
\begin{enumerate}
\item $\mathcal{M}_{0,\Omega}(r,n)$ is reduced and irreducible;
\item the open embedding $\mathbb{X}^{s}(r,n)\hookrightarrow\mathbb{X}(r,n)$
descends to an open embedding $\mathcal{M}_{\Omega}^{reg}(r,n)\rightarrow\mathcal{M}_{0,\Omega}(r,n)$,
which is the smooth locus of $\mathcal{M}_{0,\Omega}(r,n)$;
\item $\mathcal{M}_{0,\Omega}(r,n)$ has a stratification into locally closed
subsets of the form
\[
\mathcal{M}_{0,\Omega}(r,n)=\underset{k=0}{\overset{n}{\bigsqcup}}\mathcal{M}_{\Omega}^{reg}(r,n-k)\times(\mathbb{A}^{2})^{(k)};
\]

\item the composition 
\[
\xymatrix{\mathbb{X}(r,n)\ar[r]^{\tau} & \mathbb{M}_{\Omega}(r,n)\ar[r]^{\iota} & \mathbb{M}(r,n)}
\]
is an equivariant closed embedding, inducing a closed embedding 
\[
\mathcal{M}_{0,\Omega}(r,n)\rightarrow\mathcal{M}_{0}(r,n)
\]
which is compatible with the stratifications, meaning that for any
integer $k$
\[
(\mathcal{M}^{reg}(r,n-k)\times(\mathbb{A}^{2})^{(k)})\cap\mathcal{M}_{0,\Omega}(r,n)=\mathcal{M}_{\Omega}^{reg}(r,n-k)\times(\mathbb{A}^{2})^{(k)}
\]
holds.
\end{enumerate}
\end{thm}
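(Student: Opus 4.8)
The plan is to deduce (1)--(3) from the general theory of Uhlenbeck spaces for classical groups (\cite{BFG,NS,Ch}), after observing that our presentation $\mathcal{M}_{0,\Omega}(r,n)=\mathbb{X}(r,n)/\!/O(V)$ is an instance of it, and to prove (4) directly. For (1): one argues first that $\mathbb{X}(r,n)$, cut out in $End^{+}(V)^{\oplus2}\oplus Hom(W,V)$ by $[A,B]+I\Omega I^{\top}=0$ in the expected codimension, is reduced (local complete intersection, hence Cohen--Macaulay, with dense smooth locus containing the open subset $\mathbb{X}^{s}(r,n)$) and irreducible; both are part of the cited theory. These facts pass to $\mathcal{M}_{0,\Omega}(r,n)=Spec(\mathbb{C}[\mathbb{X}(r,n)]^{O(V)})$: reducedness because the invariant ring sits inside a reduced ring, and irreducibility because $\mathbb{X}^{s}(r,n)/O(V)\cong\mathcal{M}_\Omega^{reg}(r,n)$ is smooth and connected (\cite{BFG}), hence irreducible, and is dense (being the image of the dense open $\mathbb{X}^{s}(r,n)$).

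For (2): on $\mathbb{X}(r,n)$ the form $G$ equals $1_V$, which is invertible, so Lemma \ref{lem:Gnonz locf} gives $\mathbb{X}^{s}(r,n)=\mathbb{X}^{sc}(r,n)$; on this open invariant locus $O(V)$ acts freely with geometric quotient the smooth variety $\mathcal{M}_\Omega^{reg}(r,n)$ (the isomorphism recorded just before the statement), and since stable orbits are closed, $\mathbb{X}^{s}(r,n)$ is saturated, so $\mathcal{M}_\Omega^{reg}(r,n)$ is open in $\mathcal{M}_{0,\Omega}(r,n)$. That it exhausts the smooth locus I would obtain from (4) together with the corresponding fact for $\mathcal{M}_0(r,n)$: a point outside $\mathcal{M}_\Omega^{reg}(r,n)$ lies in a stratum with $k\ge1$ and maps, under the embedding of (4), into the singular locus of $\mathcal{M}_0(r,n)$, whence a Zariski-tangent-space estimate (or the local product description of \cite{BFG}) shows it is already singular in $\mathcal{M}_{0,\Omega}(r,n)$. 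For (3): the closed points of $\mathcal{M}_{0,\Omega}(r,n)$ are the closed $O(V)$-orbits in $\mathbb{X}(r,n)$; as in the $GL$-case treated in \cite{BFG,Ch}, such an orbit decomposes, by an Uhlenbeck-type degeneration, into a stable-costable symplectic datum of charge $n-k$ (giving a symplectic bundle) and an unordered $k$-tuple of points of $\mathbb{A}^{2}=\mathbb{P}^{2}\setminus l$ (the bubbling locus), and every such configuration occurs; this yields the locally closed stratification with strata $\mathcal{M}_\Omega^{reg}(r,n-k)\times(\mathbb{A}^{2})^{(k)}$.

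For (4): the composition $\iota\circ\tau\colon\mathbb{X}(r,n)\to\mathbb{M}(r,n)$ is $(A,B,I)\mapsto(A,B,I,-\Omega^{-1}I^{\vee}1_V)$. Its image is the closed subvariety of $\mathbb{M}(r,n)$ cut out by the conditions that $A$ and $B$ be $1_V$-symmetric and that the fourth coordinate equal $-\Omega^{-1}I^{\vee}1_V$, and $\iota\tau$ maps $\mathbb{X}(r,n)$ isomorphically onto it (the inverse being a coordinate projection), so it is a closed embedding. It is equivariant for the inclusion $O(V)\hookrightarrow GL(V)$ determined by $1_V$: the only thing to check is the transformation of the added coordinate, and $-\Omega^{-1}(gI)^{\vee}1_V=(-\Omega^{-1}I^{\vee}1_V)g^{-1}$ holds precisely because $g$ preserves $1_V$. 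To descend to GIT quotients I would show that the restriction homomorphism $\mathbb{C}[\mathbb{M}(r,n)]\to\mathbb{C}[\mathbb{X}(r,n)]$, which is surjective and intertwines the two group actions, carries $\mathbb{C}[\mathbb{M}(r,n)]^{GL(V)}$ onto $\mathbb{C}[\mathbb{X}(r,n)]^{O(V)}$; this gives the closed embedding $\mathcal{M}_{0,\Omega}(r,n)\to\mathcal{M}_0(r,n)$. Compatibility with the stratifications then follows by matching the descriptions in (3): the closed orbit $(E',\{p_1,\dots,p_k\})$ with $E'$ a symplectic bundle of charge $n-k$ goes to $(E'\text{ regarded as an }SL_r\text{-bundle},\{p_1,\dots,p_k\})$ in the $k$-stratum of $\mathcal{M}_0(r,n)$, and a point of that stratum lies in the image of $\mathcal{M}_{0,\Omega}(r,n)$ if and only if its bundle part admits a compatible symplectic form, i.e. lies in $\mathcal{M}_\Omega^{reg}(r,n-k)\times(\mathbb{A}^{2})^{(k)}$.

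The main obstacle is the surjectivity $\mathbb{C}[\mathbb{M}(r,n)]^{GL(V)}\twoheadrightarrow\mathbb{C}[\mathbb{X}(r,n)]^{O(V)}$. The target is generated, by the first fundamental theorem for $O(V)$ acting on the symmetric endomorphisms $A,B$ and the columns of $I$ (with the invariant form $1_V$ at hand), by traces of monomials in $A,B$ and by pairings $\langle v_i,\,w(A,B)\,v_j\rangle$ of columns $v_i,v_j$ of $I$; each such generator is the restriction of a $GL(V)$-invariant trace of a word in $A,B,I,J$ on $\mathbb{M}(r,n)$, since $J$ restricts on $\mathbb{X}(r,n)$ to $-\Omega^{-1}I^{\vee}=\Omega I^{\top}$. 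Pinning down generators of both invariant rings and matching them is the technical core; this is carried out in \cite{BFG}, whose computation I would transcribe into the present coordinates. (One could instead try to verify the hypotheses of Lemma \ref{lem:closem crit} for the affine morphism $\mathcal{M}_{0,\Omega}(r,n)\to\mathcal{M}_0(r,n)$, but establishing properness there without the invariant-ring statement is itself the difficulty.)
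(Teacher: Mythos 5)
The paper does not actually prove this theorem: it is quoted from the literature with the pointer ``for details, see \cite{BFG,NS,Ch}'', which is also where your outline ultimately sends every substantive claim (reducedness and irreducibility of $\mathbb{X}(r,n)$, the stratification by closed orbits, the local structure at non-regular points for the smooth-locus assertion in (2), and the surjectivity $\mathbb{C}[\mathbb{M}(r,n)]^{GL(V)}\twoheadrightarrow\mathbb{C}[\mathbb{X}(r,n)]^{O(V)}$ that you correctly isolate as the technical core of (4)). The parts you verify yourself --- that $\iota\circ\tau$ is a closed embedding of affine varieties, its equivariance via $g^{\vee}1_{V}=1_{V}g^{-1}$ for $g\in O(V)$, and the reduction of the descent to GIT quotients to the invariant-ring statement --- are correct, so your proposal is consistent with, and somewhat more explicit than, what the paper itself provides.
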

As a a direct consequence of the discussion of the previous two sections,
we find a relation between symplectic Uhlenbeck spaces and moduli
of framed symplectic sheaves.
\begin{thm}
The moduli space $\mathcal{M}_{\Omega}(r,n)$ is isomorphic to the
strict transform of the closed subscheme $\mathcal{M}_{0,\Omega}(r,n)\subseteq\mathcal{M}_{0}(r,n)$
under the resolution $\pi$.\end{thm}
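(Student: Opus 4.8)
The plan is to reduce the statement to an assertion purely about $\mathcal{M}_{\Omega}(r,n)$, and then to prove that assertion via the ADHM picture. Write $\widetilde Y$ for the strict transform of $\mathcal{M}_{0,\Omega}(r,n)$ under $\pi$. By definition $\widetilde Y$ is the scheme-theoretic closure in $\mathcal{M}(r,n)$ of $\pi^{-1}\bigl(\mathcal{M}_{0,\Omega}(r,n)\cap\mathcal{M}^{reg}(r,n)\bigr)$, where $\mathcal{M}^{reg}(r,n)\subseteq\mathcal{M}_0(r,n)$ is the smooth locus, over which $\pi$ is an isomorphism. By the $k=0$ case of part (4) of the theorem on $\mathcal{M}_{0,\Omega}(r,n)$ one has $\mathcal{M}_{0,\Omega}(r,n)\cap\mathcal{M}^{reg}(r,n)=\mathcal{M}_\Omega^{reg}(r,n)$, and since $\pi$ identifies $\pi^{-1}(\mathcal{M}^{reg}(r,n))$ with the open subscheme $\mathcal{M}^{reg}(r,n)\subseteq\mathcal{M}(r,n)$, we get that $\widetilde Y$ is the scheme-theoretic closure of $\mathcal{M}_\Omega^{reg}(r,n)$ inside $\mathcal{M}(r,n)$. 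Since $\mathcal{M}_\Omega^{reg}(r,n)$ is reduced (it is smooth, by Corollary \ref{cor:smooth lf points}) and $\mathcal{M}(r,n)$ is smooth, hence reduced, this closure is the reduced closed subscheme of $\mathcal{M}(r,n)$ supported on $\overline{\mathcal{M}_\Omega^{reg}(r,n)}$. On the other hand $\mathcal{M}_\Omega^{reg}(r,n)$ is an open subscheme of the closed subscheme $\mathcal{M}_\Omega(r,n)\subseteq\mathcal{M}(r,n)$, and by Section \ref{sec:irred} the scheme $\mathcal{M}_\Omega(r,n)$ is irreducible with $\mathcal{M}_\Omega^{reg}(r,n)$ dense in it; hence $\overline{\mathcal{M}_\Omega^{reg}(r,n)}=|\mathcal{M}_\Omega(r,n)|$ and $\widetilde Y=\mathcal{M}_\Omega(r,n)_{\mathrm{red}}$. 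Thus the theorem is equivalent to the statement that $\mathcal{M}_\Omega(r,n)$ is reduced.

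Before that I would record the geometric content justifying the name ``strict transform''. The restriction of the Gieseker-to-Uhlenbeck map $\pi$ to the closed subscheme $\mathcal{M}_\Omega(r,n)$ is proper, being a base change of a projective morphism, and its set-theoretic image is $\mathcal{M}_{0,\Omega}(r,n)$: on points $\pi([E,a])=\bigl([E^{\vee\vee},a^{\vee\vee}],Z(C)\bigr)$, and by the remark opening Section \ref{sec:irred} the reflexive hull of a framed symplectic sheaf is a framed symplectic bundle of charge $rk(G)$, so this point lies in $\mathcal{M}_\Omega^{reg}(r,n-k)\times(\mathbb{A}^2)^{(k)}$, and by part (3) of the theorem on $\mathcal{M}_{0,\Omega}(r,n)$ those strata exhaust $\mathcal{M}_{0,\Omega}(r,n)$. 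The restriction is an isomorphism over the dense open $\mathcal{M}_\Omega^{reg}(r,n)$, so it is birational onto $\mathcal{M}_{0,\Omega}(r,n)$. (To have $\pi$ factor scheme-theoretically through the closed subscheme $\mathcal{M}_{0,\Omega}(r,n)$ one may either invoke reducedness once it is established, or check directly in the ADHM charts that the equations cutting out $\mathcal{M}_{0,\Omega}(r,n)\subseteq\mathcal{M}_0(r,n)$ pull back to zero on $\mathbb{M}_\Omega^s(r,n)$.)

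The main obstacle is therefore to prove $\mathcal{M}_\Omega(r,n)$ is reduced. Since $\mathbb{M}_\Omega^s(r,n)\to\mathcal{M}_\Omega(r,n)$ is a principal $GL(V)$-bundle (a base change of $\mathbb{M}^s(r,n)\to\mathcal{M}(r,n)$), this is equivalent to showing $\mathbb{M}_\Omega^s(r,n)$ is reduced. Now $\mathbb{M}_\Omega^s(r,n)$ is a closed subscheme of the smooth variety $\mathbb{M}^s(r,n)$, cut out by the $GA$- and $GB$-symmetry equations and the ADHM equation; it is irreducible (free quotient of the irreducible $\mathbb{M}_\Omega^s(r,n)/GL(V)$, pulled back), and its costable locus $\mathbb{M}_\Omega^{sc}(r,n)=\{\det G\neq 0\}$ is a dense open which is smooth, being a principal $GL(V)$-bundle over the smooth $\mathcal{M}_\Omega^{reg}(r,n)$. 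It thus remains to rule out embedded components of $\mathbb{M}_\Omega^s(r,n)$ along $\{\det G=0\}$. Here I would use the explicit one-parameter degenerations built in Section \ref{sec:irred}: through every point of $\{\det G=0\}$ they produce a family over a line whose generic member is costable. I would promote such a family to a flat family and combine it with Serre's criterion ($R_0$ plus $S_1$), after checking — through these explicit coordinates, at a boundary point and its completion — that $\mathbb{M}_\Omega^s(r,n)$ is Cohen--Macaulay, or at least satisfies $S_1$. Establishing this depth statement at the boundary of the costable locus is the step I expect to demand the most work; everything else is bookkeeping from the results already at hand.
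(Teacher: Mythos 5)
Your reduction is exactly the paper's argument: the strict transform is by definition the closure of $\pi^{-1}\bigl(\mathcal{M}_{0,\Omega}(r,n)\cap\mathcal{M}^{reg}(r,n)\bigr)$, the compatibility of stratifications (the $k=0$ stratum) identifies $\mathcal{M}_{0,\Omega}(r,n)\cap\mathcal{M}^{reg}(r,n)$ with $\mathcal{M}_{\Omega}^{reg}(r,n)$, and the irreducibility result of Section~\ref{sec:irred} shows that the closure of $\mathcal{M}_{\Omega}^{reg}(r,n)$ inside $\mathcal{M}(r,n)$ is $\mathcal{M}_{\Omega}(r,n)$. The paper stops there, asserting that $\mathcal{M}_{\Omega}(r,n)$ is \emph{the smallest closed subscheme} containing the locally free locus and citing Section~\ref{sec:irred}; you are right that Section~\ref{sec:irred} literally only proves topological density, so that if ``strict transform'' is read as scheme-theoretic closure (which is automatically reduced, since $\mathcal{M}_{\Omega}^{reg}(r,n)$ is smooth), the final equality needs $\mathcal{M}_{\Omega}(r,n)$ to be reduced, i.e.\ $\mathbb{M}_{\Omega}^{s}(r,n)$ to have no embedded components along $\{\det G=0\}$. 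Your observation is a legitimate sharpening of the paper rather than a misreading of it. Your auxiliary paragraph on properness, the image of $\pi\vert_{\mathcal{M}_{\Omega}(r,n)}$, and birationality is correct and consistent with the surrounding remarks.

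The genuine gap is that you do not prove the reducedness you have reduced the theorem to. The closing sentence of your proposal defers exactly the hard point: establishing $S_{1}$ (or Cohen--Macaulayness) of $\mathbb{M}_{\Omega}^{s}(r,n)$ at the boundary of the costable locus. The one-parameter degenerations of Section~\ref{sec:irred} give, through each boundary point, a curve whose generic point is costable; this yields topological density of $\{\det G\neq0\}$, which you already have from irreducibility, but it does not by itself exclude embedded primes --- for that one needs an actual depth estimate or an explicit primary decomposition of the ideal generated by the $GA$-, $GB$-symmetries and the ADHM equation, and nothing in the paper (nor in your sketch) supplies it. So as written your argument proves the theorem only at the level of underlying reduced schemes, which is precisely what the paper's own three-line proof establishes. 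If you intend the stronger scheme-theoretic statement, the $S_{1}$ claim must be proved, not announced; if you are content with the paper's reading, you should delete the reducedness discussion or flag it explicitly as an open refinement, since in its current form the proposal raises an obligation it does not discharge.
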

\begin{proof}
The maximal open subset of $\mathcal{M}_{0}(r,n)$ over which $\pi$
is an isomorphism is $\mathcal{M}^{reg}(r,n);$ it follows by definition
that the strict transform in the statement is defined as 
\[
\overline{\pi^{-1}(\mathcal{M}_{0,\Omega}(r,n)\cap\mathcal{M}^{reg}(r,n))}=\overline{\pi^{-1}(\mathcal{M}_{\Omega}^{reg}(r,n))}=\mathcal{M}_{\Omega}(r,n),
\]
since $\mathcal{M}_{\Omega}(r,n)$ is the smallest closed subscheme
of $\mathcal{M}(r,n)$ containing the locus of symplectic bundles,
see Sect. \ref{sec:irred}.\end{proof}
\begin{rem}
We note that $\mathcal{M}_{\Omega}(r,n)$ also coincides with the
total transform $\pi^{-1}(\mathcal{M}_{0,\Omega}(r,n))$, as its points
are exactly the framed sheaves whose double dual is symplectic; indeed,
if 
\[
\varphi^{\prime}:E^{\vee\vee}\rightarrow E^{\vee\vee\vee}=E^{\vee}
\]
is a ``honest'' symplectic form on the double dual, its composition
with $E\rightarrow E^{\vee\vee}$ endows $E$ with a structure of
framed symplectic sheaf.
\end{rem}

\subsection{Some remarks on the singularities of $\mathcal{M}_{\mathbb{P}^{2},\Omega}(r,n)$.}
\begin{rem}
$\mathcal{M}_{\mathbb{P}^{2},\Omega}(r,1)$ is smooth for any even
$r$. 
\begin{proof}
We know 
\[
\mathcal{M}_{\mathbb{P}^{2},\Omega}(r,1)\cong\mathbb{M}_{\Omega}^{s}(r,1)/\mathbb{C}^{\star}
\]
where $\mathbb{M}_{\Omega}^{s}(r,1)$ is just the affine variety $\mathbb{C}^{2}\times(\mathbb{C}^{r}\backslash\{0\})\times\mathbb{C}$;
indeed, the symmetries and the ADHM equation are vacuous in this case.
Since the $\mathbb{C}^{\star}-$action is free on this space, no singularities
can arise in the quotient.
\end{proof}
\end{rem}
The case $n=1$ is special, since the singular locus of the Uhlenbeck
space $\mathcal{M}_{0,\Omega}(r,1)$ is smooth (therefore, a single
blow-up is enough to resolve the singularities). In fact, singularities
appear in $\mathcal{M}_{\Omega}(r,n)$ as we take $n>1$. In the next
example, we draw our attention to the case $n=2$. 
\begin{example}
Let $I:W\rightarrow\mathbb{C}^{2}$ be a linear map, and consider
the ADHM quadruple $\xi=(0,0,I,0)\in\mathbb{M}_{\Omega}(r,2).$ This
configuration will be stable if we choose $I$ to be surjective. Let
us compute the dimension of $T_{\xi}\mathbb{M}_{\Omega}^{s}(r,1)$:
using the description of the tangent space in the proof of \ref{lem:Closem adhm},
we have
\[
T_{\xi}\mathbb{M}_{\Omega}^{s}(r,1)=\{(X_{A},X_{B},X_{I},X_{G})\mid I\Omega I^{\top}X_{G}=0\}.
\]
If we choose $I$ so that $I\Omega I^{\top}$ is not invertible (it
is enough to require the rows of the matrix $I$ to span an isotropic
subspace of $W$, and this can be done while keeping $I$ surjective
if $r\geq4$), we obtain 
\[
dim(T_{\xi}\mathbb{M}_{\Omega}^{s}(r,1))>2\cdot2^{2}+2r=8+2r.
\]
The dimension of $\mathbb{M}_{\Omega}^{s}(r,1)$ is exactly $8+2r,$
so we found a singular point.
\end{example}
Unfortunately, Cor. \ref{thm:Obstruction} has no easy translation
into the language of of ADHM data, but we can still say something
on the nonsingular locus of $\mathcal{M}_{\Omega}(r,n)$. 
\begin{rem}
Suppose that a point $x=[(E,\alpha,\varphi)]\in\mathcal{M}_{\Omega}(r,n)$
satisfies the hypothesis of Cor. \ref{thm:Obstruction}. Let $\iota(x)\in\mathcal{M}(r,n)$
be the corresponding framed sheaf. We have an exact sequence of vector
spaces
\[
\xymatrix{0\ar[r] & T_{x}\mathcal{M}_{\Omega}(r,n)\ar[r] & T_{\iota(x)}\mathcal{M}(r,n)\ar[r] & Ext_{\mathcal{O}_{X}}^{1}(\Lambda^{2}E,\mathcal{O}_{X}(-1))\ar[r] & 0}
\]
This forces 
\[
ext^{1}(\Lambda^{2}E,\mathcal{O}_{X}(-1))=2nr-rn-2n=rn-2n.
\]
Now, by Serre duality, we have 
\[
Ext_{\mathcal{O}}^{1}(\Lambda^{2}E,\mathcal{O}_{X}(-1))\cong H^{1}(\mathbb{P}^{2},\Lambda^{2}E(-2)).
\]
Let $E^{\prime}$ be a framed bundle with the same numerical invariants
of $E$. The bundle $\Lambda^{2}E^{\prime}$ is a framed bundle as
well, and we can compute: 
\[
c_{2}(E^{\prime})=H^{1}(\mathbb{P}^{2},\Lambda^{2}E^{\prime}(-2))=-\chi(\Lambda^{2}E^{\prime}(-2))=nr-2n,
\]
see \cite[Sect. 2.1]{Na}. We deduce 
\[
-\chi(\Lambda^{2}E(-2))=nr-2n
\]
since this quantity only depends on the numerical invariants. Let
$T\subseteq\Lambda^{2}E$ be the torsion subsheaf. $T$ is concentrated
on points since $E$ is locally free away from a codimension two locus.
Call $F=\Lambda^{2}E/T$; $F$ is again a framed sheaf, and from the
long exact sequence in cohomology coming from the short exact sequence
\[
\xymatrix{0\ar[r] & T\ar[r] & \Lambda^{2}E(-2)\ar[r] & F(-2)\ar[r] & 0}
\]
we get 
\[
\chi(\Lambda^{2}E(-2))=length(T)-h^{1}(\Lambda^{2}E(-2)).
\]
In order to obtain $\chi(\Lambda^{2}E(-2))=ext^{1}(\Lambda^{2}E,\mathcal{O}_{X}(-1))$
as desired, we are forced to ask $T=0$. Now, the torsion of the seaf
$E^{\otimes2}$ vanishes if and only if $E$ is locally free (see
\cite[Thm. 4]{Au}); we deduce that the hypothesis of \ref{thm:Obstruction}
can hold if and only if the torsion is concentrated in the summand
$S^{2}E$. 

The previous discussion implies that no symplectic sheaf $E$ whose
residue $C=E^{\vee\vee}/E$ is a skyscraper sheaf $\mathbb{C}_{x}$
(i.e. $E$ has a unique singular point with multiplicity $1$) can
satisfy the hypothesis. We sketch a proof of this fact. First, we
can suppose $x=(0,0)\in\mathbb{A}^{2}$. Passing to stalks on $x,$
we can write $E_{x}$ as the kernel of a quotient 
\[
\mathcal{O}^{\oplus r}\rightarrow\mathbb{C}_{(0,0)}.
\]
Denote by $m$ the ideal $(x,y)\subseteq\mathbb{C}[x,y]$. The element
\[
x\wedge y\in\Lambda^{2}m
\]
can be proved very easily to be nonzero and torsion; from this we
get that $\Lambda^{2}E_{x}$ has torsion, and this implies that $\Lambda^{2}E$
has torsion as well. 
\end{rem}
Anyway, the following proposition will show that \ref{thm:Obstruction}
does not give necessary conditions for a point to be smooth.
\begin{prop}
Let $\xi\in\mathcal{M}_{\Omega}(r,n)$ be represented by an ADHM quadruple
$(A,B,I,G)$ such that either $A$ or $B$ is a nonderogatory endomorphism.
Then $\xi$ is a nonsingular point. 
\end{prop}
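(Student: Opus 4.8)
The plan is to pass to the ADHM picture and reduce the statement to a bound on the dimension of a tangent space. Since the quotient map $\mathbb{M}_{\Omega}^{s}(r,n)\to\mathcal{M}_{\Omega}(r,n)$ is a $GL(V)$-principal bundle (the Remark following Lemma \ref{lem:Closem adhm}), the point $\xi$ is nonsingular if and only if $\mathbb{M}_{\Omega}^{s}(r,n)$ is smooth at the representative $(A,B,I,G)$. By Section \ref{sec:irred} the scheme $\mathcal{M}_{\Omega}(r,n)$ is irreducible of dimension $rn+2n$, so $\mathbb{M}_{\Omega}^{s}(r,n)$ is irreducible of dimension $rn+2n+n^{2}$ and each of its points has tangent space of dimension at least $rn+2n+n^{2}$. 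Hence it suffices to prove the reverse inequality
\[
\dim T_{(A,B,I,G)}\mathbb{M}_{\Omega}^{s}(r,n)\leq rn+2n+n^{2},
\]
and, replacing $B$ by $A$ if necessary, I may assume $A$ is nonderogatory.

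I would then use the explicit description of $T_{(A,B,I,G)}\mathbb{M}_{\Omega}^{s}$ obtained in the proof of Lemma \ref{lem:Closem adhm}: it is the space of quadruples $(X_{A},X_{B},X_{I},X_{G})\in End(V)^{\oplus2}\oplus Hom(W,V)\oplus Hom(S^{2}V,\mathbb{C})$ satisfying the two linearized $G$-symmetries and the linearized ADHM equation. The nonderogatory hypothesis enters through the theorem of Taussky and Zassenhaus quoted above (\cite{TZ}): since $A$ is nonderogatory, the space $\{M\in End(V)\mid MA=A^{\vee}M\}$ has dimension $n$ and consists of symmetric matrices, so the linear map $\eta\mapsto\eta A-A^{\vee}\eta$ from $Hom(S^{2}V,\mathbb{C})$ to $\Lambda^{2}V^{\vee}$ is \emph{surjective} with $n$-dimensional kernel $\{g_{0}P(A)\mid P\in\mathbb{C}[t]\}$, where $g_{0}$ is a fixed symmetric matrix with $g_{0}A=A^{\vee}g_{0}$. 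Consequently the first $G$-symmetry equation already has full rank $\dim\Lambda^{2}V^{\vee}$ in the variable $X_{G}$ alone, and after imposing it the variable $X_{G}$ survives only modulo that $n$-dimensional space. Feeding this into the linearized ADHM equation --- where one exploits that at the point $(A,B,I,G)$ the relations $GA=A^{\vee}G$, $GB=B^{\vee}G$ and $[A,B]=I\Omega^{-1}I^{\vee}G$ hold, so that left composition of $G$ with the ADHM expression lands automatically in $\Lambda^{2}V^{\vee}$ and the symplectic ADHM equations fail to be a transversal system --- one reads off the appropriate redundancies and is left with the bound $\dim T_{(A,B,I,G)}\mathbb{M}_{\Omega}^{s}\leq rn+2n+n^{2}$. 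Together with the lower bound above, this proves the proposition.

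The main obstacle is exactly this last rank/redundancy count when $G$ is \emph{not} invertible: if $G$ were invertible the assertion would reduce to Corollary \ref{cor:smooth lf points}, but for a genuinely singular symplectic sheaf one cannot divide by $G$, and one must split $V$ simultaneously according to a cyclic basis for $A$ and according to $\mathrm{rk}(G)$, and check carefully which blocks of the second $G$-symmetry and of the linearized ADHM equation are independent. Establishing that the nonderogatory condition on $A$ is precisely what forces the tangent space not to exceed the expected dimension is the linear-algebra heart of the argument.
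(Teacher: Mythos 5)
Your reduction is the same as the paper's: via the principal $GL(V)$-bundle $\mathbb{M}_{\Omega}^{s}\rightarrow\mathcal{M}_{\Omega}(r,n)$ and the irreducibility established in Section \ref{sec:irred}, smoothness at $\xi$ is equivalent to the single inequality $\dim T_{(A,B,I,G)}\mathbb{M}_{\Omega}^{s}\leq n^{2}+rn+2n$, and your key input is also the paper's: the Taussky--Zassenhaus fact that, for $A$ nonderogatory and put in symmetric form, $X_{G}\mapsto X_{G}A-A^{\vee}X_{G}$ maps the symmetric forms \emph{onto} the skew forms with $n$-dimensional kernel. But the proposal stops exactly where the proof has to happen. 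You extract only $\frac{1}{2}n(n-1)$ of rank of the Jacobian (from the first symmetry row in the variable $X_{G}$); for the remaining $n(n-1)$ you write that ``one reads off the appropriate redundancies,'' and then concede in your final paragraph that this count --- together with a splitting of $V$ by $\mathrm{rk}(G)$ and a case analysis of which blocks of the second symmetry and of the linearized ADHM equation are independent --- is an unresolved obstacle. That count is the entire content of the statement, so as written the argument is not complete.

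The missing step is in fact much simpler than the block analysis you envisage, and this is where you drift away from the paper's proof. No transversality of the full system, no use of the second symmetry row, and no decomposition of $V$ according to $\mathrm{rk}(G)$ is needed: one only needs the \emph{lower} bound $\tfrac{3}{2}n(n-1)$ on the rank of the Jacobian displayed in the proof of Lemma \ref{lem:Closem adhm}. The first symmetry row does not involve $X_{B}$ at all, while the ADHM row contains the block $X_{B}\mapsto[A,X_{B}]$, which has rank $n^{2}-n$ on all of $End(V)$ precisely because $A$ is nonderogatory. For a map with block shape $\left(\begin{smallmatrix}0 & C\\ D & E\end{smallmatrix}\right)$, where $C$ is the surjective $X_{G}$-block of the first row and $D=[A,\_]$ acts on $X_{B}$, the rank is at least $\mathrm{rank}(C)+\mathrm{rank}(D)=\tfrac{1}{2}n(n-1)+n(n-1)$, whence
\[
\dim T_{(A,B,I,G)}\mathbb{M}_{\Omega}^{s}\leq 2n^{2}+nr+\tfrac{1}{2}n(n+1)-\tfrac{3}{2}n(n-1)=n^{2}+nr+2n,
\]
and equality follows from your lower bound. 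In particular the difficulty you flag about non-invertible $G$ dissolves: the equations need not form a transversal system and the value of $\mathrm{rk}(G)$ never enters the estimate once the $[A,X_{B}]$ block is brought in.
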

Indeed, using the constructions in Section \ref{sec:irred} it is
immediate to produce examples of symplectic sheaves whose residue
has length equal to $1$ and which are represented by quadruples with
$A$ nonderogatory. We now prove the proposition.
\begin{proof}
We use once again the ADHM-theoretic description of the tangent space
as exposed in the proof of Lemma \ref{lem:Closem adhm}, i.e. we think
of $T_{(A,B,I,G)}\mathbb{M}_{\Omega}^{s}(r,n)$ as the kernel of the
the Jacobian matrix 
\[
\begin{pmatrix}X_{A}\\
X_{B}\\
X_{I}\\
X_{G}
\end{pmatrix}\mapsto\begin{pmatrix}G\_-\_^{\top}G & 0 & 0 & \_A-A^{\top}\_\\
0 & G\_-\_^{\top}G & 0 & \_B-B^{\top}\_\\
{}[\_,B] & [A,\_] & (I\Omega\_^{\top}+\_\Omega I^{\top})G & I\Omega I^{\top}\_
\end{pmatrix}\begin{pmatrix}X_{A}\\
X_{B}\\
X_{I}\\
X_{G}
\end{pmatrix}
\]
\[
\begin{pmatrix}X_{A}\\
X_{B}\\
X_{I}\\
X_{G}
\end{pmatrix}\in End(V)^{\oplus2}\oplus Hom(W,V)\oplus Hom(S^{2}V,\mathbb{C}).
\]
Suppose $A$ is nonderogatory. We change coordinates to make $A$
symmetric, so that we can write $\_A-A^{\top}\_=-[A,\_]$. The ciclicity
of $A$ guarantees the surjectivity of the map $[A,\_]$ as a morphism
on the space of symmetric matrices with values in the space of skew-symmetric
matrices. If we think $[A,\_]$ as a linear map on the space of general
square matrices, its rank is $n^{2}-n$. We deduce immediately that
the rank of the Jacobian is greater or equal to $\frac{3}{2}n(n-1)$,
hence 
\[
dim(T_{(A,B,I,G)}\mathbb{M}_{\Omega}^{s}(r,n))\leq2n^{2}+nr+\frac{1}{2}n(n+1)-\frac{3}{2}n(n-1)=
\]
\[
=n^{2}+nr+2n=dim(\mathcal{M}_{\Omega}(r,n))+n^{2}\leq dim(T_{(A,B,I,G)}\mathbb{M}_{\Omega}^{s}(r,n)).
\]
This implies that the equation 
\[
dimT_{\xi}(\mathcal{M}_{\Omega}(r,n))=dim(\mathcal{M}_{\Omega}(r,n))
\]
holds, as required.
\end{proof}
The locus of sheaves $[(A,B,I,G)]$ with $A$ nonderogatory is a nonempty
open subscheme of codimension greater or equal than $1$. Indeed,
its complement is contained in the locus cut out by the discriminant
of the characteristic polynomial of $A$ (call it $\Delta_{A}$; it
is a $GL(V)-$invariant function). Any matrix $A$ whose discriminant
is nonzero will be nonderogatory, since in this case $A$ does not
admit double eigenvalues. In particular, the singular locus is contained
in the intersection 
\[
\{\Delta_{A}=0\}\cap\{\Delta_{B}=0\}\cap\{det(G)=0\}.
\]
This intersection cannot be of codimension $1$ unless these divisors
all share some irreducible components, and they do not (to see this,
it is again enough to play with deformations as in Section \ref{sec:irred}).
We conclude:
\begin{cor}
$\mathcal{M}_{\Omega}(r,n)$ is nonsingular in codimension $2$.\end{cor}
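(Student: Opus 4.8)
The plan is to trap the singular locus $\Sigma$ of $\mathcal{M}_{\Omega}(r,n)$ inside the common zero set of the three $GL(V)$-invariant functions $\Delta_{A}$, $\Delta_{B}$ and $\det(G)$ singled out above, and then to prove that this zero set has codimension $3$. The case $n=1$ has already been disposed of, so assume $n\geq 2$.

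First I would record the containment precisely. By Corollary \ref{cor:smooth lf points} together with the smoothness of $\mathcal{M}(r,n)$ (Theorem \ref{thm:ADHM=00003Dmoduli}), every symplectic bundle is a smooth point, so $\Sigma\subseteq V(\det G)=\{rk(G)\leq n-1\}$; by the previous proposition a point with $A$ or $B$ nonderogatory is smooth, and a derogatory matrix has a repeated eigenvalue, so $\Sigma\subseteq V(\Delta_{A})\cap V(\Delta_{B})$. Thus $\Sigma\subseteq\Sigma_{0}:=V(\Delta_{A})\cap V(\Delta_{B})\cap V(\det G)$, a closed subset of the irreducible variety $\mathcal{M}_{\Omega}(r,n)$, which has dimension $rn+2n$. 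As the generalized Krull height theorem gives $\mathrm{codim}\,\Sigma_{0}\leq 3$, the whole content is to show $\mathrm{codim}\,\Sigma_{0}\geq 3$, i.e.\ that the three divisors meet properly.

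I would establish this by the usual ``regular sequence'' argument in three steps. First, $V(\det G)$ is irreducible of codimension $1$: its dense open stratum $\{rk(G)=n-1\}$ is the locus of framed symplectic sheaves with a length-one residue, which fibres over $\mathcal{M}_{\Omega}^{reg}(r,n-1)\times\mathbb{A}^{2}$ with irreducible fibres $\mathbb{P}^{r-1}$ via $[E]\mapsto([E^{\vee\vee}],\,\mathrm{supp}(E^{\vee\vee}/E))$. Second, $\Delta_{A}$ does not vanish identically on $V(\det G)$: in the normal form $G=\mathrm{diag}(1\!\!1_{n-1},0)$ the $GA$-symmetry forces $A$ to be block lower triangular with a symmetric $(n-1)\times(n-1)$ block equal to the matrix $A'$ of the symplectic ADHM datum of $E^{\vee\vee}$ and a scalar block $\alpha$, so for a generic such sheaf ($A'$ with distinct eigenvalues, $\alpha$ avoiding them) $A$ has pairwise distinct eigenvalues; hence $W:=V(\Delta_{A})\cap V(\det G)$ is pure of codimension $2$. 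Third, $\Delta_{B}$ vanishes on no irreducible component of $W$: in the same normal form the eigenvalues of $B$ are those of the block $B'$ together with a scalar $\beta$, and a component-by-component inspection — carried out exactly as in the rank $(n-1)$ analysis of Section \ref{sec:irred}, perturbing $B'$, the lower blocks and $\beta$ while keeping the point inside $V(\Delta_{A})\cap V(\det G)$ — exhibits on a Zariski-dense subset of each component a configuration with $B$ of pairwise distinct eigenvalues. Therefore $\Sigma\subseteq\Sigma_{0}=W\cap V(\Delta_{B})$ is pure of codimension $3$.

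The main obstacle is the third step. Once cut out inside $\mathbb{M}_{\Omega}^{s}(r,n)$, the discriminant locus $V(\Delta_{A})$ need not be irreducible, so $\Delta_{B}$ cannot be tested at a single generic point; one has to control every irreducible component of $W$, in particular to rule out that $W$ has a component contained in a deeper stratum $\{rk(G)\leq n-2\}$. This is precisely the point at which the explicit rational curves of Section \ref{sec:irred} are invoked: they certify, on a dense subset of each component of the intermediate loci, that $A$ or $B$ can be deformed to have distinct eigenvalues, which is what forces $\Sigma_{0}$ down to codimension $3$ and hence $\mathcal{M}_{\Omega}(r,n)$ to be nonsingular in codimension $2$.
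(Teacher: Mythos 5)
Your argument is essentially the paper's: the paper likewise traps the singular locus inside $\{\Delta_{A}=0\}\cap\{\Delta_{B}=0\}\cap\{\det(G)=0\}$ and appeals to the deformations of Section \ref{sec:irred} to conclude that these three invariant divisors meet properly. Your step-by-step check that each successive divisor cuts every irreducible component of the previous intersection properly (including the worry about components hiding in the deeper strata $\{rk(G)\leq n-2\}$) is a more careful rendering of what the paper compresses into the single sentence ``it is again enough to play with deformations,'' but it is the same proof.
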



\begin{thebibliography}{ADHM}
\bibitem[ADHM]{ADHM}M. Atiyah, N. Hitchin, V. Drinfel'd, Y. Manin,
\emph{Construction of instantons}, Phys. Lett. A , 65(3), 185-\LyXZeroWidthSpace 187
(1978)

\bibitem[Au]{Au}M. Auslander, \emph{Modules over unramified local
rings, }Illinois J. Math. Volume 5, Issue 4, 631-647 (1961)

\bibitem[Bal]{Bal}V. Balaji, \emph{Principal bundles on projective
varieties and the Donaldson-Uhlenbeck compactification, }J. Differential
Geom. 76.3, 351-398 (2007)

\bibitem[Ba1]{Ba1}V. Baranovsky, \emph{Moduli of sheaves on surfaces
and action of the oscillator algebra, }Journal of Differential Geometry
55.2, 193-227 (2000)

\bibitem[Ba2]{Ba2}V. Baranovsky, \emph{Uhlenbeck compactification
as a functor, }Int. Mathematics Research Notices 23, 12678-12712 (2015) 

\bibitem[BFG]{BFG}A. Braverman, M. Finkelberg, D. Gaitsgory, \emph{Uhlenbeck
spaces via affine Lie algebras,} The unity of mathematics, 17-135
(2006)

\bibitem[BM]{BM}U. Bruzzo, D. Markushevich, \emph{Moduli of framed
sheaves on projective surfaces}, Documenta Math. 16, 399-410 (2011)

\bibitem[BMT]{BMT}U. Bruzzo, D. Markushevich, A. Tikhomirov, \emph{Uhlenbeck-Donaldson
compactification for framed sheaves on projective surfaces}, Math.
Z. 275, 1073\textendash 1093 (2013)

\bibitem[Ch]{Ch}J. Choy, \emph{Moduli spaces of framed symplectic
and orthogonal bundles on $\mathbb{P}^{2}$ and the K-theoretic Nekrasov
partition functions, }PhD Thesis, Kyoto University (2015) 

\bibitem[Do]{Do}S. Donaldson, \emph{Instantons and geometric invariant
theory}, Comm. Math. Phys. 93(4), 453--460 (1984)

\bibitem[GD]{GD}A. Grothendieck, J. Dieudonn\'{e}, \emph{\'{E}l\'{e}ments
de g\'{e}om\'{e}trie alg\'{e}brique: II. \'{E}tude globale \'{e}l\'{e}mentaire
de quelques classes de morphismes}, Publ. Math. IHES 8, p. 5-222 (1966)

\bibitem[GS1]{GS1}T.L. Gomez, I. Sols, \emph{Stable tensors and moduli
space of orthogonal sheaves}, arXiv:math/0103150v4 (2002)

\bibitem[GS2]{GS2}T.L. Gomez, I. Sols, \emph{Moduli space of principal
sheaves over projective varieties}, Annals of Mathematics 161, 1037\textendash 1092
(2005)

\bibitem[He]{He}A. A. Henni, \emph{Monads for torsion-free sheaves
on multi-blow-ups of the projective plane,} Int. J. Math. 25, 450008,
42 pages (2014)

\bibitem[HL]{HL}D. Huybrechts, M. Lehn, \emph{Framed modules and
their moduli}, Internat. J. Math. 6, 297-324 (1995)

\bibitem[JMW]{JMW}M. Jardim, S. Marchesi and A. Wissdorf, \emph{Moduli
of autodual instanton bundles}, arXiv:1401.6635 (2014)

\bibitem[Mc]{Mc}J. McCleary, \emph{User\textquoteright s Guide to
Spectral Sequences}, Cambridge Studies in Advanced Mathematics 58
(2nd ed.) (2001)

\bibitem[Na]{Na}H. Nakajima, \emph{Lectures on Hilbert schemes of
points on surfaces}, American Mathematical Soc., University Lecture
Series No. 18 (1999)

\bibitem[NS]{NS}N. Nekrasov, S. Shadchin, \emph{ABCD of instantons,
}Communications in Mathematical Physics 252.1, 359-391(2004)

\bibitem[OSS]{OSS}C. Okonek, M. Schneider, H. Spindler, \emph{Vector
bundles on complex projective spaces}, Progress in Math. 3, Birkh\"{a}user
(1980)

\bibitem[TZ]{TZ}O. Taussky, H. Zassenhaus, \emph{On the similarity
transformation between a matrix and its transpose}, Pacific J. Math.
Volume 9, Number 3, 893\textendash 896 (1959)\end{thebibliography}
\end{document}